\documentclass{compositio}
\usepackage{mathpazo}
\usepackage{lmodern}

\usepackage[inline]{enumitem}

\usepackage{amssymb,amsmath,latexsym,lipsum}
\usepackage{enumitem}
\usepackage{tabulary}
\usepackage{booktabs}
\usepackage{charter}
\usepackage[title]{appendix}
\usepackage{longtable}
\usepackage{amsthm}
\usepackage{euscript}
\usepackage{color}
\usepackage{float}
\usepackage[T1]{fontenc}
\usepackage[singlespacing]{setspace}
\usepackage[colorinlistoftodos]{todonotes}
\usepackage{tikz}
\usetikzlibrary {positioning}
\definecolor {processblue}{cmyk}{0.96,0,0,0}
\usepackage{bookmark}
\usepackage{multirow,bigdelim}
\usepackage{mathrsfs}
\usepackage{natbib}
\usepackage{mathtools}
\usepackage{tikz-cd}
\usepackage{pgfplots}
\usepackage{pgf}
\usetikzlibrary{arrows}

\pgfplotsset{compat=1.16}
\renewcommand{\contentsname}{Contents\\{\footnotesize\normalfont}}

\newtheorem{theorem}{Theorem}[section]

\newtheorem{lemma}[theorem]{Lemma}
\newtheorem{corollary}[theorem]{Corollary}
\newtheorem{definition}[theorem]{Definition}
\theoremstyle{remark}

\theoremstyle{remark}
\newtheorem{remark}[theorem]{Remark}

\DeclareMathOperator{\rc}{\mathfrak{tc}}
\newcommand{\Pj}{\mathbb{P}^{1}}
\newcommand{\PjS}{\mathbb{P}_S^{1}}

\newcommand{\Pjk}{\mathbb{P}_k^{1}}

\newcommand{\Mgnbar}{\overline{\mathcal{M}}_{g,n}}

\newcommand{\Mn}{\mathcal{M}_{0,n}}

\newcommand{\A}{\mathbb{A}}
\newcommand{\Div}{\mathcal{DIV}_{g,a}}
\newcommand{\Mnexm}{\mathcal{M}_{0,n}^{ex, \mathbf{m}}}
\newcommand{\Mnquexm}{\mathcal{M}_{0,n}^{qu-ex, \mathbf{m}}}
\newcommand{\AScovhen}{\mathcal{AS}cov_h^{\vec{e},n}}
\newcommand{\G}{\mathbb{G}}

\newcommand{\PLHURphgnZ}{\mathcal{PLH}_{A}^{\mathbb{Z}_{(p)}}}
\newcommand{\PLHURphgnF}{\mathcal{PLH}_{A, \Xi}^{\mathbb{F}_p}}
\newcommand{\LHURphgnZ}{\mathcal{LH}_{A}^{\mathbb{Z}_{(p)}}}
\newcommand{\LHURphgnF}{\mathcal{LH}_{A,\Xi}^{\mathbb{F}_p}}
\newcommand{\RubhbX}{\mathcal{RUB}^\Xi_{h,b}}

\newcommand{\Maphgnb}{\mathcal{M}ap^B_A}
\newcommand{\Mapfshgn}{\mathcal{M}ap_A^{B, fin-sur}}

\newcommand{\Z}{\mathbb{Z}}

\newcommand{\N}{\mathbb{N}}

\begin{document}
\newcommand{\Mggpn}{\mathcal{M}_{g,g'}[\mathbb{Z}/p^n]}
\newcommand{\Mggp}{\mathcal{M}_{g,g'}[\mathbb{Z}/p]}
\newcommand{\MggpG}{\mathcal{M}_{g,g'}[G]}
\newcommand{\Mggpnbar}{\overline{\mathcal{M}}_{g,g'}[\mathbb{Z/p}^n]}
\newcommand{\Mgg}{\mathcal{M}_{g,g'}}
\newcommand{\MggO}{\mathcal{M}_{g,g'=0}}
\newcommand{\MgnG}{\mathcal{M}_{g,n}[G]}
\newcommand{\OP}{\mathcal{O}_{\Pj}}
\title{Logarithmic Hurwitz Spaces in Mixed and Positive Characteristic with Wild Ramification}

\author{Matthias Hippold}
\email{matthias.hippold@mail.huji.ac.il}
\address{Einstein Institute of Mathematics, The Hebrew University of Jerusalem, Edmond J.
Safra Campus, Giv’at Ram, Jerusalem, 91904, Israel }

\classification{14H30 (primary), 14D23, (secondary).}
\keywords{Hurwitz Spaces, Wild Ramification, Covers of Curves, Moduli Problems, Logarithmic Geometry}
%\author{Michael Temkin}
%\email{michael.temkin@mail.huji.ac.il}
%\address{Einstein Institute of Mathematics, The Hebrew University of Jerusalem, Edmond J.
%Safra Campus, Giv’at Ram, Jerusalem, 91904, Israel}

%\dedication{A dedication can be included here.}
%\classification{14H30 (primary), 14H10, (secondary).}
%\keywords{Artin--Schreier-Witt theory, moduli space, connectedness.}
%\thanks{This file documents \pkg{compositio} version \Fileversion\ and
%was last revised \Filedate.}

\begin{abstract}
We introduce new logarithmic Hurwitz spaces $\LHURphgnZ$ and $\LHURphgnF$ over $\mathbb{Z}_{(p)}$ and $\mathbb{F}_p$ respectively that in the mixed characteristic case can be considered as a compactification of the admissible cover stack parametrizing ramified covers of curves in characteristic $0$ of degree $p$ and in the equicharacteristic case compactify the space of separable maps between smooth curves of degree $p$. These Hurwitz spaces will carry a logarithmic structure and to emphasize that they are informative, we prove that in some first cases our Hurwitz spaces are log smooth. To achieve this, we introduce various Moduli spaces that parametrize Artin--Schreier covers and the locus of zeroes and poles of certain differential forms, show their smoothness and compute their dimension.
%Futhermore, we will show that there exists a smooth filtration of the boundary similar to that of $\Mgnbar$.
\end{abstract}
 
\maketitle
\newtheorem*{theorem*}{\normalfont\scshape Theorem}
\vspace*{6pt}\tableofcontents
\section{Introduction}
When considering a finite surjective degree $p$ morphism of nodal curves over $\Z_{(p)}$, one notes that the special fiber behaves very differently compared to the generic fiber: While in characteristic $0$ at each irreducible component of the source curve the cover is given by a monodromy representation, this is far from being true in characteristic $p$. Here, at some components the cover is separable but it might as well be the case that at some components the cover is inseparable so is given by the relative Frobenius. 

Because of these complications, large parts of the mathematical community have excluded the case of degree $p$ covers in characteristic $p$ when studying Hurwitz spaces that are Moduli spaces that parametrize finite covers of curves. However, results from \cite{brezner2017lifting} provide criteria for which covers of degree $p$ can be lifted from characteristic $p$ to characteristic $0$. After fixing the data of $A=(h,g,N, \Lambda)$, we introduce $\LHURphgnZ$ as the closure of the Hurwitz space in characteristic $0$ of covers of degree $p$ of smooth curves of genus $g$ by a curve of genus $h$ with fixed ramification pattern $\Lambda$ in a large Moduli space of all finite maps of curves over $\Z_{(p)}$. We furthermore give an explicit modular description of this space that keeps track of the lifting data that ensures which covers in characteristic $p$ lift to characteristic $0$. This data consists of a piecewise linear function on the tropicalization of the source curve and bivariant logarithmic differential forms satisfying some conditions. Our main result will be the following theorem:
\begin{theorem*} 
    All the irreducible components of the special fiber of the logarithmic Hurwitz space $\LHURphgnZ$ are of dimension $3g-3+N$. For $p=2$ and $g=0$, $\mathcal{LH}_{A}^{\mathbb{Z}_{(2)}}$ is log smooth over $\Z_{(2)}$ with logarithmic structure coming from $\N \to \mathbb{Z}_{(2)}$ that sends $1$ to $2$.
\end{theorem*}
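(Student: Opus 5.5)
The plan is to stratify the special fiber of $\LHURphgnZ$ according to the combinatorial type of the lifting data: the dual graph of the source curve, the piecewise linear function on its tropicalization, and the resulting partition of the vertices into separable and inseparable (Frobenius) components. On each stratum the explicit modular description (lifting data in the sense of \cite{brezner2017lifting}) splits the moduli problem into local pieces. Over an inseparable vertex, the cover is the relative Frobenius, and the extra datum is a bivariant logarithmic differential form with prescribed zeros and poles. Over a separable vertex, the datum is an Artin--Schreier cover with prescribed conductors. I would then use the auxiliary moduli spaces introduced later in the paper, which parametrize Artin--Schreier covers and the loci of zeros and poles of such differentials. These are smooth, and their dimensions are computed there. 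The dimension of a stratum becomes the sum of these local dimensions, corrected by the gluing conditions at edges (matching orders of vanishing and slopes of the piecewise linear function) and by the number of edge lengths.

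The key counting step is to show that the dimension of every maximal stratum equals $3g-3+N$. For this I would use a Riemann--Hurwitz type identity on each vertex together with the balancing condition for the piecewise linear function. At an inseparable vertex of target genus $g_v$ with $n_v$ special points, the space of log differentials with fixed divisor type should contribute $3g_v-3+n_v$ once the scaling is taken into account. At a separable vertex, the Artin--Schreier conductors enter through the Riemann--Hurwitz formula, and the contribution again comes out to $3g_v-3+n_v$. Summing over vertices and subtracting one for each edge gives the dimension of the corresponding stratum of the target moduli. Adding back the smoothing parameters of edges that the lifting conditions leave free then yields $3g-3+N$.

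One inequality comes for free. Since $\LHURphgnZ$ is defined as the closure of the characteristic $0$ Hurwitz space, which has dimension $3g-3+N$ over $\Z_{(p)}$, every component of the special fiber has dimension at least $3g-3+N$. It is therefore enough to show that no stratum has larger dimension, and the count above gives exactly this upper bound.

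For the log smoothness claim with $p=2$, $g=0$, I would verify the infinitesimal lifting criterion, or equivalently produce étale local charts of the form $\Z_{(2)}[P]\otimes_{\Z[\N]}\Z[Q]$ with $\N\to\Z_{(2)}$, $1\mapsto 2$. In this case the Artin--Schreier covers are $y^2-y=f$ and the differentials live on $\Pj$, so everything is explicit. The deformations are unobstructed because the local auxiliary spaces are smooth. Moreover, each node contributes a chart of the form $xy=t$ or $xy=2^{k}u$, and $k$ is dictated by the slope of the piecewise linear function. The monoid $Q$ should be built from the tropical data: edge lengths subject to the linear relations imposed by the piecewise linear function.

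The main obstacle is showing that these linear relations cut out a saturated, integral monoid with torsion-free cokernel, with order prime to $2$. This is what makes the chart log smooth rather than merely toroidal up to finite quotients. I would first attempt this by an explicit analysis of trees, using $g=0$, where the relations have a triangular structure along the edges.
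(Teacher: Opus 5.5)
Both halves of your plan have a gap, and in each case the fix is flatness rather than a finer stratum analysis.

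\textbf{The dimension statement.} The stratum count you rely on for the upper bound is not available in the generality claimed. For $p>2$ a separable vertex of degree $p$ is in general not Galois, hence not Artin--Schreier. The only moduli of exact and quasi-exact forms at hand live on $\Pj$, so inseparable components of positive genus are not covered.

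Your per-vertex heuristic is also wrong:
\begin{enumerate}
\item the exact locus has dimension $n-4+\sum_i\lfloor m_i/p\rfloor$ and the quasi-exact locus has dimension $n-3+\sum_i\lfloor m_i/p\rfloor$;
\item an Artin--Schreier vertex contributes $n+m-3+\sum_i(e_i-1-\lfloor (e_i-1)/p\rfloor)$;
\item in the stratum $\Delta_{1;0}$ of Section~\ref{sec:example}, the quasi-exact vertex with five special points moves in a one-dimensional family ($\mu=\sqrt{\lambda}$), not a two-dimensional one.
\end{enumerate}
These terms balance only globally, through Riemann--Hurwitz and $\lceil \ell/p\rceil+\lfloor \ell/(p(p-1))\rfloor=\ell/(p-1)$. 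That is carried out only for $p=2$, $g=0$ (Lemma~\ref{lem: dim_calc}).

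None of this is needed, because the upper bound comes as cheaply as your lower bound. The reduced stack underlying $\LHURphgnZ$ is the closure of the characteristic-$0$ admissible cover stack, hence flat over $\Z_{(p)}$. So an irreducible component of the special fiber is a component of $V(p)$ with $p$ a nonzerodivisor. It therefore has codimension exactly one, and by the dimension formula over $\Z_{(p)}$ it has dimension exactly $3g-3+N$. This is the paper's proof of Theorem~\ref{thm:equ-dim}, and it holds for all $p$ and $g$.

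\textbf{Log smoothness.} The step ``the deformations are unobstructed because the local auxiliary spaces are smooth'' does not follow. Take the chart map $\gamma\colon U\to\mathrm{Spec}\,\Z_{(2)}[\N^{r}]$ with $r=\#V_C^{ex}+\#E_D^{hor}$, and let $U_0$ be its fiber over the origin. Smoothness of the Artin--Schreier and (quasi-)exact loci only shows that this log stratum through $x$ is smooth. It says nothing about directions transverse to the stratum, and local equations $xy=2^k u$ of the universal curve do not supply them. Indeed, your reasoning would apply verbatim to $\PLHURphgnZ$, whose $\Delta_{1;0}$-stratum in Section~\ref{sec:example} is built from the same kind of smooth pieces (with no relation between $\mu$ and $\lambda$). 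Yet $\PLHURphgnZ$ is not log smooth.

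The missing mechanism is a dimension comparison. Lemma~\ref{lem: dim_calc} gives $\dim U_0=N-3-r$ exactly, while $\dim U=N-2$ by the first part. The target of $\gamma$ is regular of dimension $r+1$ and the closed fiber is smooth of dimension $\dim U-(r+1)$, so $\gamma$ is flat and hence smooth at $x$; the paper phrases this via the Jacobian criterion. Your count ``stratum dimension plus free parameters equals $3g-3+N$'' is exactly the input needed here, but you spend it on the first claim instead of the second.

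Finally, the obstacle you single out is not the hard part. For $p=2$ a node of $D$ has either one preimage of multiplicity $2$ or two \'etale preimages, so the relations only identify generators. The minimal monoid is therefore free, $\N\log\rho\oplus\N^{r}$, with the generator of the base monoid mapping to $\log\rho$ (Lemma~\ref{cor:min_log_stru_cov}). The kernel and cokernel conditions in Kato's criterion are then trivial.
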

Similarly, for $A$ as above and $\Xi$ carrying additional information about the ramification profile $(\Lambda, \Xi)$, we also introduce the Hurwitz space $\LHURphgnF$ that compactifies the space of separable degree $p$ covers between smooth curves with fixed ramification profile in characteristic $p$ and show the following analogous result:
\begin{theorem*} 
    For $p=2$ and $g=0$, the Hurwitz space $\mathcal{LH}_{A, \Xi}^{\mathbb{F}_{2}}$ is log smooth over $\mathbb{F}_2$ with trivial logarithmic structure.
\end{theorem*}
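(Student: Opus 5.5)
The plan is to reduce log smoothness of $\mathcal{LH}_{A, \Xi}^{\mathbb{F}_{2}}$ over $\mathbb{F}_2$ (trivial log structure on the base) to a computation on deformation spaces stratum by stratum, using the modular description of the space. By that description, an $S$-point is a degree $2$ finite map of log curves $C \to \Pj$-type target (genus $g=0$) together with lifting data: a piecewise linear function on the tropicalization of the source and bivariant logarithmic differential forms satisfying the compatibility conditions. Since $p=2$, every separable component of the cover is an Artin--Schreier cover, and every inseparable component is the relative Frobenius. I would first stratify $\mathcal{LH}_{A, \Xi}^{\mathbb{F}_{2}}$ by combinatorial type, namely the dual graph of the source and target together with the marking of which vertices are separable or inseparable, the slopes of the piecewise linear function, and the ramification data $(\Lambda,\Xi)$. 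I would then show that, étale locally, each stratum is a product of (a) the moduli spaces parametrizing Artin--Schreier covers with prescribed ramification and (b) the moduli spaces of zeroes and poles of the relevant differential forms on the inseparable components. These are the auxiliary spaces the abstract says are shown to be smooth, with their dimensions computed, and I would take that smoothness and those dimensions as given.

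Next comes the local model at a point. Using a Kato-type chart, I would show that the space is étale locally isomorphic to (smooth stratum) $\times \operatorname{Spec} \mathbb{F}_2[P]$, where $P$ is the monoid cut out by the node smoothing parameters of target and source. These are related by $t_e = s_{e'}^{m}$, with $m$ the slope of the piecewise linear function, together with the balancing and cycle conditions imposed by the tropical function. For $g=0$ the target tropical curve is a tree. Consequently the cycle conditions on the source graph are governed only by the covering structure, and for degree $2$ each edge has at most two preimages. I would check that the resulting monoid $P$ is fine and saturated, with torsion-free groupification, or at least torsion of order prime to $2$. That would give log smoothness by Kato's criterion, provided the map to the log structure on the base is trivial, which it is here since we are over $\mathbb{F}_2$ with trivial log structure.

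Finally, I would verify that the deformation theory actually realizes this chart, i.e.\ that obstructions vanish. To do this I would show that smoothings at each node can be chosen independently once the tropical data are fixed. This uses $H^1$ of the relevant sheaf on a genus $0$ target, which vanishes, and it uses the fact that Artin--Schreier equations $y^2 - y = f$ deform freely in $f$. I would also do a dimension count, comparing the dimension of each stratum plus the rank of $P^{gp}$ with the expected dimension of the open dense locus of smooth separable covers.

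I expect the main obstacle to be the inseparable components. There the cover is Frobenius, and the lifting condition is encoded by the differential forms. One must check that the compatibility conditions between the differential forms and the adjacent Artin--Schreier components at nodes are exactly toric conditions, and not non-saturated or non-reduced ones. I would first try a direct analysis of the local equations at a node joining a separable and an inseparable component in characteristic $2$, comparing conductor and pole orders of the differential with the slope.
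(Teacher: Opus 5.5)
Your stratum-by-stratum description (Artin--Schreier moduli times moduli of exact forms) and your use of Kato's chart criterion match the paper. The decisive step, however, has a genuine gap: smoothness of the chart map in the directions transverse to the stratum. You want to get it from unobstructedness, namely independent smoothing of nodes once the tropical data are fixed, vanishing of $H^1$ on the genus-$0$ target, and free deformation of $y^2-y=f$. None of this touches the Frobenius components, and the smoothings are not independent. The parameters of non-horizontal edges are tied to the level parameters by $\alpha(\delta_e)^{\kappa_e}=s_j\cdots s_{i-1}$. Whether a negative level can be opened up at all is governed by the exactness condition. That condition is global on each Frobenius component (vanishing of $\mathfrak{tc}$ on global sections), not a condition at the nodes, so the node-local toric analysis in your last paragraph cannot see it either.

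Because your transverse argument never uses exactness, it would apply verbatim to $\PLHURphgnF$. That space can have components generically parametrizing covers of nodal curves. At their generic points the minimal log structure is nontrivial, which is incompatible with log smoothness over a field with trivial log structure (compare the extra component of $\PLHURphgnZ$ in Section~\ref{sec:example}). So the argument proves too much.

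Separately, your relation $t_e=s_{e'}^m$ with $m$ the slope conflates two different relations. Target and source smoothing parameters are related by the multiplicity $n_e\in\{1,2\}$ of $f$ at the node, while slopes relate edge lengths to level differences. The minimal monoid is generated by the negative levels and the images of slope-$0$ edges. For $p=2$ it is free of rank $r=\#V_C^{ex}+\#E_D^{hor}$, because two source nodes over one target node are both \'etale and get identified.

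The paper never constructs deformations. The missing input is that $\mathcal{LH}_{A,\Xi}^{\mathbb{F}_2}$ is the closure of the locus of covers of smooth curves; this is where the Brezner--Temkin lifting theorem enters. That locus is $\mathcal{AS}cov_h^{\vec e,N_0}$ with $e_i=\xi_i+1$, smooth of some dimension $d$, so a chart neighbourhood $U$ of $x$ is pure of dimension $d$. The proof then runs as follows.
\begin{enumerate}
\item For the chart $\chi\colon U\to\operatorname{Spec}\mathbb{F}_2[\mathbb{N}^r]$, the fibre $U_0=\chi^{-1}(0)$ is the log stratum through $x$.
\item $U_0$ is an open subset of a finite quotient of a product of Artin--Schreier moduli and spaces $\Mnexm$, hence smooth.
\item Summing the dimensions of these factors, using Riemann--Hurwitz and $\#E_D-\#V_D+1=0$, gives $\dim U_0=d-r$.
\item Hence $\mathcal{O}_{U,x}/(t_1,\dots,t_r)$ is regular of dimension $\dim\mathcal{O}_{U,x}-r$. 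This forces $\mathcal{O}_{U,x}$ to be regular with the $t_i$ part of a regular system of parameters.
\item So $\chi$ is flat at $x$ with smooth fibre, i.e.\ smooth, and Kato's criterion applies.
\end{enumerate}
The dimension count you list as a final sanity check is thus the actual mechanism of the proof. It only closes once equidimensionality has been secured via the lifting theorem.
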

We expect that by extending our findings in Section~\ref{sec:rel_obj} to curves of higher genus and separable but non-normal covers, we will be able to generalize the latter theorems to higher genus and degree as long as the degree is not divided by $p^2$ in future work.

The author wants to express his gratitude to his academic advisor Michael Temkin who suggested this problem as part of the author's Ph.D. project, to Dan Abramovich for providing valuable feedback and to his Ph.D. brother Simon Stojkovic for many helpful discussions about logarithmic geometry. 

This research was supported by ERC Consolidator Grant 770922 - BirNonArchGeom.
\subsection{Outline}
In Section~\ref{sec:preliminaries}, we will recall basic notions of logarithmic geometry with a focus on log curves and basic facts about the Hurwitz space in characteristic $0$ together with the minimal logarithmic structure on this Moduli space making it log smooth over its base field. We will also study the stack $\RubhbX$ that parametrizes piecewise linear functions on the dual graph of log curves. Finally, we will introduce exact and quasi-exact relative differential forms on curves and by using the Cartier operator describe them for the relative Frobenius more explicitly than in previous works.

Next, in Section~\ref{sec:mod_space} we will construct the Moduli spaces $\LHURphgnZ$ and $\LHURphgnF$ which are the main object of interest in this paper and study some of their first properties. Deformations of objects inside a fixed stratum of $\LHURphgnZ$ and $\LHURphgnF$ can be described as a product of deformations of differential forms and separable covers. Therefore, in Section~\ref{sec:rel_obj} we will study Moduli spaces of these objects. In Section~\ref{sec:log_smooth}, we use insights of the previous sections to prove our main theorem stating that in some first cases $\LHURphgnZ$ and $\LHURphgnF$ are log smooth. Finally, in the last section, Section~\ref{sec:example} we will illustrate our findings in an example.

\section{Preliminaries} \label{sec:preliminaries}
\subsection{Log Curves}
In this section, we will briefly recall basic notions of logarithmic geometry with a focus on curves and their Moduli space. Logarithmic structures were introduced in \cite{kato1989logarithmic}, a broader introduction is given in the survey paper \cite{abramovich2010logarithmic}. To our knowledge, the most extensive discussion of basic notions can be found in the textbook \cite{ogus2018lectures}.

\begin{definition}
    Let $(X, \mathcal{O}_X)$ be a scheme, $M$ be a sheaf of monoids in the étale topology on $X$ and $\alpha: M \to \mathcal{O}_X$ be a morphism of sheaves of monoids. If the restriction of $\alpha$ to the subsheaf of invertible elements is an isomorphism onto its image, then $\alpha$ is called a logarithmic structure on $M$. 

    A morphism between two logarithmic structures $M,N$ on $(X, \mathcal{O}_X)$ is a morphism of sheaves $M \to N$ that is compatible with the morphisms to $\mathcal{O}_X$. A morphism between logarithmic schemes $(X,\mathcal{O}_X, M_X)$ and $(Y, \mathcal{O}_Y, M_Y)$ is a morphism $f$ between the underlying schemes together with a morphism $f^*M_Y \to M_X$ of logarithmic structures on $X$.

    When the sheaf of regular functions and the logarithmic structure are clear from context we will sometimes abbreviate the notation by only $X$ instead of $(X,\mathcal{O}_X, M_X)$ or $(X,\mathcal{O}_X)$. 
\end{definition}
An important class of logarithmic schemes can be constructed from schemes with a divisor:
\begin{definition}
    Let $X$ be a regular scheme and $D$ be a normal crossing divisor on $X$. The logarithmic structure associated to $D$ consists of those elements of $\mathcal{O}_X$ that are invertible outside of $D$. 
\end{definition}
For a monoid $M$, its groupification $M^{gp}$ is defined to be the abelian group with the universal property such that any morphism from $M$ to an abelian group factors uniquely through $M^{gp}$ \citep[I.1.3.]{ogus2018lectures}. If the morphism $M \to M^{gp}$ is injective, we call $M$ integral. When there is a surjection $\N^n \to M$, the monoid $M$ is called finitely generated, if it is both integral and finitely generated, M is called fine. If for $m \in M^{gp}$ and $n\in \N$ we can conclude from $nm \in M$ that also $m \in M$, we say that $M$ is saturated. For $M$ being a logarithmic structure on $X$, we can form the characteristic monoid $\overline{M} \coloneqq M/\mathcal{O}_X^*$. The characteristic monoid can be thought to remember the combinatorial information coming from $M$.

In the following definition, we make use of the process of logarithmification to have enough units, details can be found for example in \citep[III.1.1.3]{ogus2018lectures}.

\begin{definition}
    For $M$ being a monoid, by $\mathbb{A}[M]$ we denote the logarithmic scheme with underlying scheme being $Spec(\Z[M])$ with logarithmic structure coming from $M \to \Z[M]$. A logarithmic scheme $X$ is called \emph{quasi-coherent} if at each point $x$ there exists an étale neighborhood of $x$ where the logarithmic scheme is isomorphic to $\mathbb{A}[M]$ for some monoid $M$. If this is the case, $M$ is called a \emph{chart} of $X$ at a point $x$.
\end{definition}

We now have the tools to introduce logarithmic curves. Instead of the more conceptual description that can be found in the literature discussed above, we will use the following ad hoc definition.

\begin{definition}
    Let $S$ be a logarithmic scheme. A logarithmic curve or log curve over $S$ is a morphism $\pi_C: C \to S$ of logarithmic schemes such that the underlying morphism of schemes is a nodal curve relative to $S$. This means that the underlying morphism is proper, surjective, finitely presented and the geometric fibers are nodal, projective and connected curves. Furthermore, regarding the the logarithmic structure we require that for each geometric point $c \in C$ over $s \in S$ one of the following cases holds:
    \begin{enumerate}
        \item The point $c$ is smooth and $\overline{M}_{C,c} \cong \overline{M}_{S,s}$. One can think about these points as ordinary smooth points.
        \item The point $c$ is smooth and $\overline{M}_{C,c} \cong \N \oplus \overline{M}_{S,s}$. These points can be considered as marked smooth points.
        \item The point $c$ is a node and $\overline{M}_{C,c} \cong \overline{M}_{S,s} \oplus \N e_1 \oplus \N e_2/(e_1+e_2=\delta_c)$ where $e_1, e_2$ map to regular parameters of both the branches of the nodes and $\delta \in \overline{M}_{S,s}$, which is called the smoothing parameter of the node.
    \end{enumerate}
\end{definition}

\begin{definition}
    The tropicalization of a logarithmic curve over an algebraically closed field $k$ with logarithmic structure $M_k$ consists of the dual graph $\Gamma_C$ in the sense of \citep[Section 1.1.]{morrison2007mori} of the underlying curve $C$ where each edge has the additional data of its smoothing parameter in $M_{k}$ that can be considered as the length of this edge. More precisely, $\Gamma_C$ has vertices given by the irreducible components of $C$, edges correspond to nodes between irreducible components and marked points at an irreducible components correspond to additional half-edges on the corresponding vertex.
\end{definition}

The definition of a logarithmic structure makes sense for any ringed topos, so in particular for algebraic stacks. Therefore, we can also make sense of the following theorem which can be found in \cite[Section 4]{kato2000log}:
\begin{theorem}
    The Moduli space $\mathcal{M}_{g,n}^{log}$ of log curves with $n$ markings and genus $g$ is represented by the Moduli stack of nodal curve $\Mgnbar$ with logarithmic structure associated to the boundary divisor coming from the locus of nodal curves.
\end{theorem}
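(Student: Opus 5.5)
The plan is F. Kato's argument: show that the functor $\mathcal{M}_{g,n}^{log}$, which sends a fine saturated log scheme $S$ to the groupoid of log curves over $S$ of genus $g$ with $n$ markings, is represented by $\Mgnbar$ with the divisorial log structure $M_{\partial}$ of its boundary $\partial = \Mgnbar \setminus \mathcal{M}_{g,n}$. Since $\Mgnbar$ is smooth over $\mathbb{Z}$ and $\partial$ is a normal crossings divisor (Deligne--Mumford, Knudsen), $M_\partial$ is fine and saturated. The universal curve $\overline{\mathcal{C}} \to \Mgnbar$ carries the divisorial log structure of the union of the preimage of $\partial$ and the marked sections, and this makes it a log curve. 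To see this, work étale locally at a node. There the deformation theory gives $\overline{\mathcal{C}}$ in the form $xy = t_i$, with $t_i$ a boundary coordinate, so the three cases of the definition of a log curve appear directly. Smooth unmarked points give $\overline{M}_{C,c} \cong \overline{M}_{S,s}$, marked points add a copy of $\N$, and at a node the generators $e_1, e_2$ coming from $x, y$ satisfy $e_1 + e_2 = \delta = [t_i]$.

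Given a log curve $C \to S$, the underlying stable marked curve gives a classifying map $f: S \to \Mgnbar$. It remains to produce a unique morphism of log structures $f^*M_\partial \to M_S$ under which $C$ becomes the pullback of the universal log curve. Étale locally near a geometric point $s$, the pullback $f^*M_\partial$ is generated by the $t_i$, one for each node $q_i$ of $C_s$, and $f^* t_i$ is the function $h_i$ with $C \cong \{xy = h_i\}$ near $q_i$. The definition of a log curve hands us $\delta_{q_i} \in \overline{M}_{S,s}$, and we need a lift $\tilde\delta_i \in M_S$ with $\alpha(\tilde\delta_i) = h_i$. For that, choose lifts $m_1, m_2 \in M_C$ of $e_1, e_2$. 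Their images are regular parameters $u x, v y$ of the two branches, so $m_1 + m_2$ is a lift of $\delta$, and rescaling by units gives the lift with image exactly $h_i$. Sending $t_i \mapsto \tilde\delta_i$ defines the map.

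The main obstacle is showing this lift is canonical, i.e.\ independent of the choices of $m_1, m_2$ and of the coordinates $x, y$. The idea is to compare two choices using the structure of $M_C$ near the node. If $m_1' = m_1 + a$ and $m_2' = m_2 + b$ with $a, b \in \mathcal{O}_C^*$, then fixing the images $x, y$ forces $\alpha(a) = \alpha(b)^{-1}$, so $m_1 + m_2$ is unchanged. The same computation shows that any isomorphism of log curves lying over $S$ is compatible with these lifts. This gives fullness and faithfulness of the comparison of groupoids; it also uses that automorphisms of stable curves are finite, so $\Mgnbar$ is a DM stack.

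Finally, I would check that the resulting log morphism $S \to (\Mgnbar, M_\partial)$ identifies $C$ with the pullback of the universal log curve. The log structure on $C$ is determined étale locally by the chart $\N e_1 \oplus \N e_2 \oplus_{\N} \overline{M}_S$, which is exactly what the pullback gives. Conversely, any log morphism from $S$ yields a log curve by pullback, and the two constructions are mutually inverse. Uniqueness of $f^*M_\partial \to M_S$ follows because it is determined on the generators $t_i$.
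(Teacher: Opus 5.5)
The paper gives no argument of its own here. It quotes F.~Kato's theorem (Section~4 of his paper on log smooth deformations), so your outline is a reconstruction of Kato's proof. The preliminary steps are fine: $M_\partial$ is fs, and the universal curve with its divisorial log structure is a log curve. The proof breaks at the step you yourself flag as the main obstacle, canonicity of the lift $\tilde\delta_i$. As you set it up, that lift is in fact not canonical.

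First, the only condition you impose on the coordinates is $xy=h_i$, and this does not determine them when $h_i$ is a zero divisor.
\begin{itemize}
\item Over the standard log point $S=(\mathrm{Spec}\,k,\N\oplus k^{*})$ one has $h_i=0$.
\item Replacing $x$ by $cx$ with $c\in k^{*}$ preserves $xy=h_i$, but it changes $m_1+m_2$, and hence $\tilde\delta_i$, by $c$.
\item The normalisation $\alpha(\tilde\delta_i)=h_i$ is vacuous there, since every lift of $\delta$ maps to $0$.
\end{itemize}
So your recipe attaches to a single log curve a whole $k^{*}$-torsor of maps $f^{*}M_\partial\to M_S$. When $C_s$ has no automorphisms, these are pairwise non-isomorphic log maps to $(\Mgnbar,M_\partial)$, so this is not a harmless ambiguity. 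Your closing remark that the map ``is determined on the generators $t_i$'' presupposes exactly what is in question, namely the values on those generators.

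Second, the comparison of two choices of $m_1,m_2$ is a non sequitur. From $\alpha(m_1+a)=\alpha(m_1)=x$ and $\alpha(m_2+b)=\alpha(m_2)=y$ you only get $(a-1)x=0$ and $(b-1)y=0$, not $ab=1$. Over the log point, $a=1+y$, $b=1$ is allowed, and it changes $m_1+m_2$ by the unit $1+y$, which is not pulled back from $S$. In particular $m_1+m_2$ need not lie in $\pi^{-1}M_S$ at all, so without a further normalisation it does not even define an element of $M_S$.

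What is missing is Kato's local analysis at a node, in three pieces.
\begin{enumerate}
\item Take $x,y$ pulled back from a chart $xy=t_i$ of the universal curve. There $t_i$ is a nonzerodivisor and $x,y$ are primes in a factorial local ring. So such coordinates are unique up to $(x,y)\mapsto(\epsilon x,\epsilon^{-1}y)$ and swapping, and neither of these changes $m_1+m_2$.
\item Show that \'etale locally one can choose $m_1,m_2$ with $\alpha(m_1)=x$, $\alpha(m_2)=y$ \emph{and} $m_1+m_2=\pi^{*}\tilde\delta_i$ on the nose. Then $\alpha(\tilde\delta_i)=h_i$ is automatic.
\item Show that this $\tilde\delta_i$ is unique: if $u,v\in\mathcal{O}_C^{*}$ satisfy $(u-1)x=0=(v-1)y$ and $uv$ comes from $\mathcal{O}_S^{*}$, then $uv=1$.
\end{enumerate}
Step (3) is a computation in $\widehat{\mathcal{O}}_{S,s}[[x,y]]/(xy-h_i)$, whose elements are uniquely of the form $c_0+\sum_{r\geq1}c_rx^r+\sum_{l\geq1}d_ly^l$. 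The annihilator of $x$ consists of the series $\sum_{l}d_ly^l$ with $d_lh_i=0$, and symmetrically for $y$. Hence $uv=1+\sum_r c_rx^r+\sum_l d_ly^l$, which is constant only if it equals $1$.

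These normalised generators, together with the analogous one at each marking, are also what your last paragraph needs. Agreement of characteristic monoids with $\N e_1\oplus\N e_2\oplus_{\N}\overline{M}_S$ does not by itself give an isomorphism of $M_C$ with the pulled-back log structure that is compatible with $\alpha$ and with $\pi^{-1}M_S$.
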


We want to understand better how $\mathcal{M}_{g,n}^{log}$ looks like locally. The answer lies in nuclear log curves in the sense of \citep[Section 3.10.]{holmes2023models}. In particular, we will see that locally $\mathcal{M}_{g,n}^{log}$ consists of log curves that are nuclear. Instead of giving the precise definition, we will state the main properties of nuclear log curves as they are summarized in \citep[Section 2]{chen2025tale}.

\begin{theorem}
\begin{enumerate}
    \item For every log curve $C \to S$ there exists a strict étale cover $\cup_{i \in I} S_i \to S$ of the base such that for each $i \in I$ the base change $C \times_S S_i \to S_i$ is nuclear.
    \item For a nuclear log curve $C \to S$, the logarithmic stratification of $S$ has a unique closed stratum and for any $s \in S$ in the closed stratum the restriction constitutes an isomorphism $\Gamma(S,\overline{M}_S) \to \overline{M}_{S,s}$.
    \item We consider $s, s' \to S$ geometric points of a nuclear log curve $C \to S$ with $s$ in the closed stratum. The dual graph $\Gamma_{s'}$ of $C_{s'}$ is obtained from the dual graph $\Gamma_s$ of $C_s$ by mapping all smoothing parameters by the map $\overline{M}_{S, s} \to \overline{M}_{S, s'}$ and contracting those that are mapped to $0$.
\end{enumerate}
\end{theorem}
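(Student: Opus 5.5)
The argument rests on the local structure theory of log curves, essentially as in \citep[Section 3.10]{holmes2023models}. I take the working definition of nuclear to be the following. The log curve $C \to S$ is nuclear if $S$ is connected and $\overline{M}_S$ is generated by global sections. Moreover, the dual graph is constant over the closed stratum and specializes by contraction. Finally, every node of the closed fibre admits a global smoothing parameter $\delta_e \in \Gamma(S,\overline{M}_S)$.

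The plan is to prove the three statements in order, with (1) as the main construction.

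For (1), fix a geometric point $s \to S$. The first step is to choose an étale neighbourhood $S' \to S$ of $s$ on which $M_S$ has a chart modelled on $\overline{M}_{S,s}$. Such a chart exists because the logarithmic structure of $S$ is fine, so charts exist étale locally by the definition of quasi-coherence above. The chart gives $\Gamma(S',\overline{M}_{S'}) \to \overline{M}_{S,s}$; after shrinking, I make this an isomorphism and make $s$ lie in the unique closed stratum, namely the locus where all chart elements vanish in $\mathcal{O}$.

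Next I handle the nodes. The nodes of $C_s$ are finitely many, and each has an étale local model $xy = t_e$ with $t_e \in \mathcal{O}_{S'}$ lifting $\delta_e \in \overline{M}_{S,s}$. After a further étale shrinking, I can separate the branches and split the irreducible components so that they become geometrically irreducible over $S'$. Covering $S$ by such $S'$ gives the strict étale cover.

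Strictness is automatic, because we only pull back the log structure. Preservation of nuclearity under further localization needs a small check but should be routine.

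For (2), the closed stratum is the locus where $\overline{M}_{S}$ has the maximal stalk. Uniqueness follows from the chart: every point where all the chart generators are non-invertible maps to the same face, namely the whole monoid. The claimed isomorphism is exactly the chart property arranged in (1).

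For (3), I use the local equation $xy = t_e$ at each node $e$ of $C_s$. The node persists at $s'$ exactly when the image of $\delta_e$ under the cospecialization map $\overline{M}_{S,s}\to\overline{M}_{S,s'}$ is nonzero. If the image is zero, $t_e$ becomes a unit near $s'$ and the node is smoothed, which contracts the corresponding edge. The new edge lengths are the images of the $\delta_e$. No new nodes can appear at $s'$, because the nodal locus of $C$ is closed. Since the components were split in (1), the vertices at $s'$ are the connected unions of vertices at $s$ joined by contracted edges.

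The main obstacle is step (1): arranging simultaneously that the chart is global with the right global sections and that the components and branches are split. I would take care of this by shrinking finitely many times, using that étale-local splitting of the nodes is compatible with restricting to a smaller connected neighbourhood of $s$.
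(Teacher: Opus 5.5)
The paper does not prove this statement; it quotes these properties from \citep[Section 3.10]{holmes2023models} and \citep[Section 2]{chen2025tale}. Your plan for (1) is the natural route: a chart at $s$ modelled on $\overline{M}_{S,s}$, shrinking to an atomic neighbourhood, then étale-locally splitting components and branches. The problem is the working definition of \emph{nuclear}, which does not support (2) and (3).

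As written, your definition omits atomicity, and (2) is then false. Let $S=\mathbb{A}^1_k$ carry the divisorial log structure of $\{0\}\cup\{1\}$, and let $C=\mathbb{P}^1_S$ carry the pulled-back log structure. Then $S$ is connected, $\Gamma(S,\overline{M}_S)=\mathbb{N}^2$ surjects onto every stalk, the dual graph is a single vertex everywhere, and there are no nodes. So $C\to S$ is nuclear in your sense. Yet there are two closed strata, and $\Gamma(S,\overline{M}_S)=\mathbb{N}^2\to\overline{M}_{S,0}=\mathbb{N}$ is not injective.

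Your argument for (2) only invokes the chart produced in (1), which an arbitrary nuclear curve does not come with. Even a global chart modelled on $\overline{M}_{S,s}$ does not give uniqueness: $\mathbb{N}\to\mathcal{O}_{\mathbb{A}^1}$, $1\mapsto t(t-1)$, is such a chart and yields exactly this example, with both $0$ and $1$ mapping to the same face. So (2) has to be part of the definition: a unique connected closed stratum $S_0$ with $\Gamma(S,\overline{M}_S)\xrightarrow{\sim}\overline{M}_{S,s}$ for $s\in S_0$. This is the atomicity condition, which is exactly what the cited sources build in. The real content of (1) is then showing that after each étale refinement used for splitting (which can disconnect the closed locus or enlarge $\Gamma(\overline{M})$), you can re-shrink to such a neighbourhood of $s$. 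That is the substance of the proof, not a routine check.

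Conversely, by putting ``specializes by contraction'' into the definition, you make (3) circular and move its content into (1), where it is never verified. The local argument you then give for (3) covers only part of the statement. You use the model $xy=t_e$ at the nodes of $C_s$ together with the cospecialization map $\overline{M}_{S,s}\to\overline{M}_{S,s'}$. This only makes sense when $s'$ is a generization of $s$.

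In (3), however, $s'$ is an arbitrary geometric point, for instance a closed point outside $S_0$. The map in question is $\overline{M}_{S,s}\cong\Gamma(S,\overline{M}_S)\to\overline{M}_{S,s'}$, supplied by (2). The missing step is global, and has two parts. First, you need that the dual graph, with edge lengths in $\overline{M}_{S,s'}$, does not change as $s'$ moves within its log stratum. Second, you need that this stratum contains a point specializing into $S_0$, which is what uniqueness of the closed stratum provides. Only after moving $s'$ to such a point do your local model and your ``no new nodes'' argument (finiteness of the singular locus over $S$) apply. The cospecialization map is then identified with the global-section map via (2).
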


\begin{remark}
    We have seen that for a nuclear log curve $C \to S$ all the tropical information is in the closed stratum, we write $\overline{M}_S \coloneqq \overline{M}_{S,s}$ for $s$ in the closed stratum and call $\Gamma_s$ the dual graph $\Gamma_C$ of $C$ with edges labeled by elements of $\overline{M}_S$.
\end{remark}

Next, we want to extend the Abel-Jacobi section. This comes from \citep[Section 4.3.]{marcus2020logarithmic}. For a log curve $\pi_C: C \to S$ we can form the following commutative diagram with exact rows:
 \begin{center}
    \begin{tikzcd}
       M^{gp}_S \arrow[r] \arrow[d] &  \overline{M}^{gp}_S  \arrow[r] \arrow[d] & 0 \arrow[d]\\
       (\pi_C)_* M^{gp}_C \arrow[r] & (\pi_C)_* \overline{M}^{gp}_C  \arrow[r] & R^1 (\pi_C)_* \mathcal{O}^\times_C
    \end{tikzcd}
 \end{center}

Taking quotients of each column yields an exact sequence
 \begin{center}
    \begin{tikzcd}
         (\pi_C)_* M^{gp}_C/ M^{gp}_S \arrow[r]  &  (\pi_C)_* \overline{M}^{gp}_C/  \overline{M}^{gp}_S \arrow[r] &  R^1 (\pi_C)_* \mathcal{O}^\times_C
    \end{tikzcd}
 \end{center}

So a global section $\ell_S$ of $ (\pi_C)_* \overline{M}^{gp}_C/  \overline{M}^{gp}_S$ is mapped to a line bundle $\mathcal{O}_C(\ell_S)$ on $C$, defined up to tensor product with line bundles pulled back from $S$. Note that when $C$ is smooth, then global sections of $ (\pi_C)_* \overline{M}^{gp}_C/  \overline{M}^{gp}_S$ are divisors supported on the markings, so they correspond to assigning integers to each marked point. In this case, the procedure corresponds to the standard Abel-Jacobi section mapping the divisor $D=\sum_{i=1}^n m_i p_i$ to $\mathcal{O}_C(D)$.

In order to extend the Abel-Jacobi section, one might now be tempted to construct a Moduli space parametrizing log curves together with global sections of $ (\pi_C)_* \overline{M}^{gp}_C/  \overline{M}^{gp}_S$. This is exactly what Marcus and Wise did when they constructed a Moduli space of tropical divisors $Div_{g,a}$ in \citep[Section 4]{marcus2020logarithmic}. However, for the purpose of this paper, we will need a modification of this Moduli space. To understand this modification, we want to understand better how we can think of global sections of $ (\pi_C)_* \overline{M}^{gp}_C/  \overline{M}^{gp}_S$.

For now, let $\pi: C \to S$ be a nuclear log curve. As in \citep[Remark 7.3.]{cavalieri2020moduli} one can consider global sections $\ell_S$ of $\overline{M}^{gp}_C$ to be piecewise linear functions on the dual graph of $\Gamma_C$ of $C$: To each irreducible component of $C$, so to each vertex of $\Gamma_C$, the section $\ell_S$ assigns an element of $\overline{M}^{gp}_{S}$ such that whenever there is an edge $e$ in $\Gamma_C$ between $v_1$ and $v_2$, then $\ell_S(v_1)-\ell_S(v_2)= \ell_e \delta_e$ for some integer $\ell_e$ and $\delta_e$ the smoothing parameter of that edge. Furthermore, to each marking an integer is assigned. So sections of $(\pi_C)_* \overline{M}^{gp}_C/  \overline{M}^{gp}_S$ correspond to piecewise linear functions on the dual graph up to simultaneously adding a global constant to all vertices. 

Following \citep[Section 5]{marcus2020logarithmic}, we call such a section \emph{aligned} when at each geometric point $s \to S$ for each pair $(v,w)$ of vertices in $\Gamma_{C_s}$ either $\ell_S(v)-\ell_S(w)$ or $\ell_S(w)-\ell_S(v)$ are in $\overline{M}_{S}$, so the set of values of $\ell_S$ is totally ordered. This motivates the following definition that is equivalent to that given in \citep[Section 5]{marcus2020logarithmic}:
\begin{definition}
 For $\Xi=(\xi_1, \dots, \xi_b)$ an integer vector of length $b$, the Moduli functor $\RubhbX$ from the category of logarithmic schemes has as $S$-points pairs $(\pi_C: C \to S, \ell_S)$, where $\pi_C: C \to S$ is a log curve in $\mathcal{M}^{log}_{h,b}$ and $\ell_S \in \Gamma ((\pi_C)_* \overline{M}^{gp}_C/  \overline{M}^{gp}_S)$ is an aligned section that assigns $\xi_i$ to each marked point.
\end{definition}

\begin{remark}
    We always can add elements of $\overline{M}^{gp}_S$ to $\ell_S$, so the value of vertices on the maximal level becomes $0$. After this choice, $\mathcal{O}_C(\ell_S)$ can be considered as a line bundle $C$ and not just defined up to tensor product with line bundles pulled back from the base. For more details on this alternative viewpoint of $\RubhbX$ see \citep[Section 2]{chen2025tale}. This motivates to call the other levels which are below the maximal level the negative levels.
\end{remark}

Marcus and Wise further prove the representability of $\RubhbX$:
\begin{theorem}
    The functor $\RubhbX$ is representable by a proper Deligne-Mumford stack with a logarithmic structure. It is log étale over $\mathcal{M}_{h,b}^{log}$.
\end{theorem}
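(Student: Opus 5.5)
The plan is to follow Marcus and Wise and realise $\RubhbX$ as a subfunctor of a stack that is already known to be representable, namely the stack $Div_{h,b}$ of tropical divisors over $\mathcal{M}_{h,b}^{log}$. First, I will show that the functor sending $(C\to S)$ to $\Gamma((\pi_C)_*\overline{M}^{gp}_C/\overline{M}^{gp}_S)$ is representable by an algebraic stack with a log structure that is log étale over $\mathcal{M}^{log}_{h,b}$. After a strict étale cover we may take $C\to S$ nuclear, with a chart $\overline{M}_S$. By the preceding discussion, sections then correspond to piecewise linear functions on $\Gamma_C$ modulo constants: values $\ell(v)\in\overline{M}^{gp}_S$ with $\ell(v_1)-\ell(v_2)=\ell_e\delta_e$ for integer slopes $\ell_e$. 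Fixing the combinatorial type (the graph together with the slopes $\ell_e$), these conditions say that certain linear combinations of the $\delta_e$ vanish in $\overline{M}_S^{gp}$; concretely, the sum of slope times length around each cycle must be zero. This cuts out a subcone of the tropical moduli cone $\sigma_\Gamma$. The functor is therefore locally the log stack $\mathbb{A}[M]\times_{\mathbb{A}[\N^{E}]}(\text{chart of }\mathcal{M}_{h,b}^{log})$, where $M$ is the saturated monoid of the subcone. This is log étale over $\mathcal{M}_{h,b}^{log}$ because it corresponds to a morphism of cone complexes that is injective on lattices (a subdivision or subcone with finite index data). Fixing the marking slopes $\xi_i$ is an open and closed condition.

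Second, I impose alignment. On each cone, alignment means that every difference $\ell(v)-\ell(w)$ is comparable with $0$. This is the subdivision of the cone by the hyperplanes $\ell(v)=\ell(w)$, and subdivisions give log modifications, which are log étale and proper. I expect the main obstacle to be properness of the result over $\mathcal{M}^{log}_{h,b}$. Without alignment the map is only a monomorphism onto a union of subcones, and a one-parameter family can escape in a way that is still proper but not separated on the DM side. The remedy is to show that the aligned subdivision is a complete fan refining the union of the subcones. I would verify properness with the valuative criterion. Given a log curve over a DVR with aligned $\ell$ on the generic fibre, the slopes are constant on the generic graph, so the values extend uniquely over the special graph by integrating slopes along edges. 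For each cycle, the balancing condition on the generic fibre specialises. Finiteness of the set of slopes comes from the fact that along edges contracted at the generic point the slope is forced by the degree condition (the divisor of $\mathcal{O}(\ell)$ is fixed by $\Xi$). Boundedness here is the key point: I would use that $\deg\mathcal{O}_C(\ell)|_v$ is determined by $\Xi$ together with the slopes, and that, after fixing the level ordering, only finitely many slope assignments are compatible.

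Third, the DM property. Automorphisms are those of the underlying log curve together with those of the base log structure, and the log modifications introduce only finite stabilisers (roots via saturation). So finite unramified diagonal follows from that of $\mathcal{M}_{h,b}^{log}$. Combining these steps yields a proper DM stack with log structure that is log étale over $\mathcal{M}^{log}_{h,b}$.
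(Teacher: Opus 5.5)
Your first step and your treatment of alignment as a subdivision follow the Marcus--Wise route, which is all the paper offers here (it only cites them). The log étale part is essentially right, provided the local chart is the full dual monoid of the subcone, i.e.\ the saturation of $\N^E+R$ in $\Z^E$ with $R$ the cycle relations. Its sharp quotient would have $\ker(\varphi^{gp})\supseteq R$ and would not be log étale.

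\textbf{The gap: properness.} This step cannot be closed for $\RubhbX$ as defined. You claim that the slopes on edges smoothed at the generic point are forced because ``the divisor of $\mathcal{O}_C(\ell)$ is fixed by $\Xi$''. It is not. $\Xi$ only prescribes slopes on the legs, while $\deg\mathcal{O}_C(\ell)|_{C_v}$ is, up to a sign convention, the sum of all outgoing slopes at $v$, interior edges included. Nothing in the definition of $\RubhbX$ constrains this degree. Likewise, ``integrating slopes along edges'' only uses edges that are already present generically.

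\textbf{Counterexample.} Let $h=0$, $b=4$, and take the boundary point of $\overline{\mathcal{M}}_{0,4}$ with components $v_1\ni p_1,p_2$ and $v_2\ni p_3,p_4$ joined by one edge $e$. For every $k\in\Z$, the function with $\ell(v_1)=0$ and $\ell(v_2)=k\delta_e$ has the prescribed leg slopes. It is aligned, since it takes only two values. Over a DVR smoothing the node, all of these extend the unique section on the smooth generic fibre. So uniqueness in the valuative criterion fails, and infinitely many points lie over each boundary point. Thus $\RubhbX$ is log étale and Deligne--Mumford over $\mathcal{M}^{log}_{h,b}$, but neither separated nor quasi-compact. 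This affects the statement as written in the paper too.

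\textbf{What is missing.} You need a condition pinning down the line bundle, e.g.\ $\mathcal{O}_C(\ell)\cong\mathcal{L}$ for a fixed $\mathcal{L}$. In this paper that condition is $\omega^{log}_{f_S}(-\ell_S)\cong\mathcal{O}_{C_S}$, which only enters in the definition of $\PLHURphgnZ$ and $\PLHURphgnF$. So properness of those stacks has to be derived through this constrained locus, not from properness of $\RubhbX$. With such a condition your valuative argument can be completed:
\begin{enumerate}
\item \emph{Uniqueness.} The multidegree fixes the divergence of $\ell$ at each vertex. Two extensions over a DVR therefore differ by a PL function with zero divergence, and such a function is constant on a connected graph with positive edge lengths.
\item \emph{Quasi-compactness.} An integral flow with fixed divergence that admits a potential with positive edge lengths has no directed cycles. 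Every slope is then bounded by the total positive divergence, which leaves finitely many slope assignments per dual graph.
\end{enumerate}
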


The stack $\RubhbX$ can be considered as the Moduli space of slopes of piecewise linear functions on log curves. We want to make this statement more precise:

\begin{definition}
    Let $\pi: C \to S$ be a log curve. A PL function on $C$ is a global section of $\overline{M}_C^{gp}$. A combinatorial PL function consists of:
    \begin{enumerate}
        \item a function $\ell': V(\Gamma) \to \overline{M}^{gp}_{S,s}$
        \item a function $\kappa: H(\Gamma) \to \Z$
    \end{enumerate}
    where $V(\Gamma)$ is the set of vertices and $H(\Gamma)$ the set of half edges of $\Gamma$ such that for two half edges $h_1,h_2$ forming an edge $e$ between vertices $v_1$ and $v_2$ we have
    \begin{align*}
        \kappa(h_2) \delta_e= \ell'(v_2)-\ell'(v_1)
    \end{align*}
    for $\delta_e$ the smoothing parameter of $e$.
\end{definition}

\begin{lemma}
    For a nuclear log curve, assigning to each vertex of $\Gamma$ the element from $\overline{M}^{gp}_{S,s}$ coming from a global section of $\overline{M}_C^{gp}$ gives a bijection between PL functions and combinatorial PL functions. In particular, an object of $\RubhbX$ defines slopes $\kappa$ on the edges of $\Gamma$.
\end{lemma}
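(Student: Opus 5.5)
The plan is to reduce the global statement to the local structure of $\overline{M}_C$ at each point of the closed fiber, and then glue using the properties of nuclear log curves from the theorem above. Fix $s$ in the closed stratum, so that by part (2) of that theorem $\Gamma(S,\overline{M}_S)\cong\overline{M}_{S,s}$. Since $\pi_C$ is proper and $\overline{M}_C^{gp}$ is constructible, a global section of $\overline{M}_C^{gp}$ is determined by its stalks at the points of $C_s$ together with the generization maps between them. I would therefore first describe a PL function entirely in terms of data on $C_s$, using the three local types in the definition of a log curve.

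At a generic point of an irreducible component $v$, or at an ordinary smooth point, the stalk is $\overline{M}^{gp}_{S,s}$. The value there is an element $\ell'(v)$, and this value is constant along the smooth locus of the component because the sheaf is locally constant there. At a marked point the stalk is $\Z\oplus\overline{M}^{gp}_{S,s}$. The section gives an element whose $\overline{M}_S$-part generizes to $\ell'(v)$ and whose $\Z$-part is the integer attached to the half-edge (leg).

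At a node $c$ of type (3), the stalk is $\overline{M}^{gp}_{S,s}\oplus\Z e_1\oplus\Z e_2/(e_1+e_2=\delta)$. The two generization maps to the branches send $e_1\mapsto\delta,\ e_2\mapsto 0$ and $e_1\mapsto 0,\ e_2\mapsto\delta$ respectively. Write an element as $m+ae_1$, using the relation to eliminate $e_2$. Its images on the two branches are $m+a\delta$ and $m$. Hence the stalk element is uniquely determined by the pair of branch values $(\ell'(v_1),\ell'(v_2))$, subject to the condition that their difference is an integer multiple $\kappa\delta_e$ of $\delta_e$. This is exactly the edge condition, with $\kappa(h)$ read off as that integer. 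Because $\delta_e\neq 0$ in the torsion-free group $\overline{M}^{gp}_S$, the integer is unique, so $\kappa$ is well defined.

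Conversely, given $(\ell',\kappa)$, I would build the stalks at nodes and at marked points by the formulas above and check that they are compatible under generization. This compatibility should produce a global section over an étale neighbourhood of $C_s$, and the nuclear property should extend it to all of $C$. Specifically, by part (3) of the theorem, every fiber's dual graph is a contraction of $\Gamma_s$, with parameters mapped through $\overline{M}_{S,s}\to\overline{M}_{S,s'}$. So the values on other fibers are forced by generization, and injectivity and surjectivity follow.

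I expect the main obstacle to be this last gluing step. One must check that stalks on $C_s$ together with generization determine a global section of $\overline{M}_C^{gp}$ over all of $C$, and not just near $C_s$. I would handle it using that every point of $C$ generizes from a point of $C_s$ (via the closed stratum and properness) and that $\overline{M}_C$ is locally constant on the strata.

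For the final sentence of the statement, an object of $\RubhbX$ gives a section of $(\pi_C)_*\overline{M}^{gp}_C/\overline{M}^{gp}_S$. Étale locally this lifts to a global section of $\overline{M}_C^{gp}$, since the quotient by constants does not affect differences. Its $\kappa$ gives the slopes, and these are independent of the lift because adding a constant does not change $\ell'(v_2)-\ell'(v_1)$.
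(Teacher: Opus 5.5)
Your analysis on the closed fibre is correct. The stalks at generic points, markings and nodes behave as you say, and since $m+ae_1\mapsto(m+a\delta_e,m)$ is injective ($\delta_e\neq 0$ in a torsion-free group), $\Gamma(C_s,\overline{M}^{gp}_C|_{C_s})$ is exactly the set of combinatorial PL functions on $\Gamma_s$.

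The gap is the step you yourself flag: passing from $C_s$ to all of $C$. This is the whole content of \citep[Lemma 2.12.]{chen2025tale}, which is all the paper invokes, and your proposed mechanism does not work. First, it is not true that every point of $C$ generizes a point of $C_s$. If $s$ is a closed point, then points of $C$ over another point of the closed stratum $S_0$, or over a closed point of $S$ outside $S_0$, are generizations of nothing in $C_s$.

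More seriously, surjectivity cannot follow from properties (2) and (3) plus local constancy on strata, because it fails for log curves having those properties. Let $\mathrm{char}\,k\neq 2$, let $S=\mathrm{Spec}\,k[a^{\pm1},t]$ carry the log structure of the divisor $t=0$, and let $C\subset\mathbb{P}^2_S$ be the conic $x^2-ay^2=tz^2$.
\begin{itemize}
\item The closed stratum $S_0=\{t=0\}$ is connected, and $\Gamma(S,\overline{M}_S)=\N$ maps isomorphically to every stalk along $S_0$.
\item Each $\Gamma_{s'}$ is either $\Gamma_s$ (two vertices, one edge with $\delta=t$) or its one-vertex contraction. So (2) and (3) hold.
\item However, the lines $x=\pm\sqrt a\,y$ are exchanged by monodromy. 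The smooth locus of $C_{S_0}\to S_0$ is $\{y=1,\ x^2=a\}\cong\mathbb{G}_m\times\mathbb{A}^1$, which is connected, and on it $\overline{M}^{gp}_C$ is the constant sheaf $\Z$.
\item Hence every global section has $\ell'(v_1)=\ell'(v_2)$, and the combinatorial PL function with $\ell'(v_1)=0$, $\ell'(v_2)=\delta$, $\kappa=\pm1$ does not lift.
\end{itemize}
Adjoining $\sqrt a$ removes the monodromy, in line with property (1).

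What is missing is an input that is built into the definition of a nuclear curve but not into (2) and (3) as you use them. Over the connected closed stratum $S_0$, the irreducible components and nodes of $C_{S_0}$ must be globally labelled by the vertices and edges of $\Gamma_s$, i.e.\ there is no monodromy. With this, the data $(\ell',\kappa)$ on $C_s$ spreads uniquely to a section over $C_{S_0}$.

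You then still have to move off $S_0$. Define the germ at each geometric point $c'$ over $s'$ by pushing $(\ell',\kappa)$ through the canonical contraction $\Gamma_s\to\Gamma_{s'}$; merged vertices get equal values because contracted edges have $\delta_e\mapsto 0$. Then check compatibility with \emph{all} specialization maps in $C$, not only those starting on $C_s$. Alternatively, do as the paper does and cite the lemma. Your deduction of the slopes for objects of $\RubhbX$ is fine once the bijection is in place.
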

\begin{proof}
    This follows from \citep[Lemma 2.12.]{chen2025tale}.
\end{proof}

For a nuclear base, we can construct even more data out of a section of $\RubhbX$ as described in \citep[Section 5]{chen2025tale}:
\begin{definition} \label{def: enhanced_graph}
    Let $(\pi: C \to S, \ell_S)$ be an object of $\RubhbX (S)$ with $S$ nuclear. The enhanced level graph of $C$ is the dual graph of $C$, so the dual graph of a curve in its closed stratum, together with the partial ordering of the vertices induced by $\ell_S$. The set of levels will be denoted by $\{0, \dots, -N\}$, so we can consider the enhancement to be a function $V(\Gamma) \to \{0, \dots, -N\}$.
\end{definition}

A simple rescaling ensemble assigns to each level $i \in  \{0, \dots, -N\}$ of the dual graph a section of the monoid $s_i \in H^0(S, \mathcal{O}_S)$ satisfying some compatibility restrictions described in \citep[Section 5.2.]{chen2025tale}. There, one can also find an explicit description about how by using a so-called log-splitting, out of the section $\ell_S$, one can construct a simple rescaling ensemble.

\begin{lemma} \label{lem: rescaling_ensamble} 
For each edge in $\Gamma$ between vertices of level $j$ and $i$ with $j < i$ and smoothing parameter $\delta_e$ and slope $\kappa_e$, the rescaling ensemble satisfies:
    \begin{align*}
        \alpha(\delta_e)^{\kappa_{e}}= s_j \dots s_{i-1} \in \mathcal{O}_S / \mathcal{O}_S^{\times} 
    \end{align*}
\end{lemma}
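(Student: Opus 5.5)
We prove the identity at the level of characteristic monoids: first in $\overline{M}_S^{gp}$, then transport it along $\alpha$ to $\mathcal{O}_S/\mathcal{O}_S^\times$. Since $S$ is nuclear, the restriction $\Gamma(S,\overline{M}_S)\to\overline{M}_{S,s}$ to a point $s$ of the closed stratum is an isomorphism. So it suffices to establish the corresponding identity among global sections of $\overline{M}_S$ and then apply $\alpha$, which gives a well-defined map $\overline{M}_S\to\mathcal{O}_S/\mathcal{O}_S^\times$.

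\emph{Step 1: the identity in $\overline{M}_S^{gp}$.} By the preceding Lemma, $\ell_S$ corresponds to a combinatorial PL function $(\ell',\kappa)$. For an edge $e$ from $v_1$ at level $i$ to $v_2$ at level $j<i$, oriented downward so that $\kappa_e>0$, we have
\begin{align*}
\kappa_e\delta_e=\ell'(v_1)-\ell'(v_2).
\end{align*}
Let $\lambda_k\in\overline{M}_S^{gp}$ denote the common value of $\ell'$ on the vertices of level $k$; this is well defined since levels are exactly the fibres of the total order induced by the aligned section. Then
\begin{align*}
\kappa_e\delta_e=\lambda_i-\lambda_j=\sum_{k=j}^{i-1}(\lambda_{k+1}-\lambda_k).
\end{align*}
Alignment guarantees that each difference $\lambda_{k+1}-\lambda_k$ lies in $\overline{M}_S$ and is nonzero.

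\emph{Step 2: matching with the rescaling ensemble.} Next we use the construction of the simple rescaling ensemble in \citep[Section 5.2]{chen2025tale} via a log-splitting. The point to check is that $s_k$ is, up to a unit, $\alpha$ applied to a lift in $M_S$ of $\lambda_{k+1}-\lambda_k$. Granting this, $\alpha$ is a monoid homomorphism $M_S\to(\mathcal{O}_S,\cdot)$ and units map to units. Applying $\alpha$ to the identity of Step 1 and passing to $\mathcal{O}_S/\mathcal{O}_S^\times$ therefore gives
\begin{align*}
\alpha(\delta_e)^{\kappa_e}=s_j\cdots s_{i-1},
\end{align*}
with the exponent and the product coming from the additive relation.

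\emph{Main obstacle.} The delicate point is Step 2. The precise definition of the $s_k$, including the indexing convention of whether $s_k$ measures the gap between levels $k$ and $k+1$, together with the compatibility conditions, is only cited rather than reproduced here. One must verify that the log-splitting produces exactly the level gaps, and that any ambiguity from the choice of splitting is absorbed into $\mathcal{O}_S^\times$. This is precisely why the statement is only made modulo units. If the paper's indexing differs by a shift, the product range adjusts accordingly. The remaining steps are formal.
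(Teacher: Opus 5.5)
Your argument is correct and is essentially the proof behind the paper's one-line citation of \citep[Lemma 5.6]{chen2025tale}. The slope relation telescopes to $\kappa_e\delta_e=\sum_{k=j}^{i-1}(\lambda_{k+1}-\lambda_k)$ as an identity in the integral monoid $\overline{M}_S$ (not merely in $\overline{M}_S^{gp}$, which is what justifies applying $\alpha$), and your Step 2 is not a gap: it is how the $s_k$ are built from $\ell_S$ via a log splitting, namely as $\alpha$ of a lift of the level gap $\lambda_{k+1}-\lambda_k$.
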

\begin{proof}
This is \citep[Lemma 5.6.]{chen2025tale}.
\end{proof}
The minimal logarithmic structures in the sense of \cite{gillam2012logarithmic} are described in \citep[Section 4]{chen2025tale}. They use a slightly different version of $\RubhbX$ but the arguments they provide also work for our Moduli stack:
\begin{theorem}
    The minimal logarithmic structure on the base for an object $(\pi_C: C \to S, \ell_S)$ of $\RubhbX$ is freely generated by symbols corresponding to negative levels and edges of the dual graph of $C$ of slope $0$.
\end{theorem}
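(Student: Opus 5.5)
The plan is to follow the strategy of \citep[Section 4]{chen2025tale}, adapted to our $\RubhbX$, and to show that a natural candidate log structure is minimal in the sense of \cite{gillam2012logarithmic}. Fix a geometric point of $S$ and assume, after the strict étale localisation of the nuclear-curve theorem, that $C \to S$ is nuclear with dual graph $\Gamma$, smoothing parameters $\delta_e \in \overline{M}_S$, and slopes $\kappa_e$ given by the lemma identifying PL functions with combinatorial PL functions. Let $E_0$ be the set of edges of slope $0$ and let the negative levels be $-1,\dots,-N$ as in Definition~\ref{def: enhanced_graph}. The candidate minimal monoid is
\begin{align*}
  Q \coloneqq \N^{E_0} \oplus \N^{N},
\end{align*}
with generators $\delta_e$ for $e \in E_0$ and $\sigma_i$ for the gap between levels $-i+1$ and $-i$.

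The first step is to write down the tautological object over $Q$. For an edge $e$ of nonzero slope joining levels $j<i$, set $\delta_e$ to be the unique element with $\kappa_e\delta_e = \sigma_{-i+1}+\dots+\sigma_{-j}$; this is the tropical shadow of Lemma~\ref{lem: rescaling_ensamble}. The vertex values $\ell(v)$ are the negatives of the sums of the gaps above $\mathrm{lev}(v)$. Alignment holds automatically because values are totally ordered by level.

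A subtlety arises here: $\kappa_e\delta_e$ being a sum of generators need not make $\delta_e$ an element of the free monoid. One must either pass to the saturation or argue, as in \citep{chen2025tale}, that because these edges do not enter the minimal structure, $\delta_e$ is determined in $Q^{gp}\otimes\mathbb{Q}$. I expect the correct statement to be that $Q$ is free once $\delta_e$ for sloped edges is regarded as a derived element, which requires its divisibility. To get this I would use the rescaling-ensemble description: the $s_i$ are the actual generators, and the edge parameters are recovered from them.

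The second step is the universal property. Let $(C\to S', \ell')$ be any object with the same underlying curve and log structure $M_{S'}$. Define $Q \to \overline{M}_{S'}$ by sending $\delta_e \mapsto \delta'_e$ for $e \in E_0$, and $\sigma_i \mapsto \ell'(\text{level } -i+1) - \ell'(\text{level } -i)$. The latter lies in $\overline{M}_{S'}$ by alignment and the chosen ordering. It remains to check three things: this map is well defined and unique; it is compatible with $\delta'_e$ for sloped edges via $\kappa_e\delta'_e = \ell'(v_2)-\ell'(v_1)$; and it lifts to a morphism of log structures. The lift exists because nuclearity gives $\Gamma(S,\overline{M}_S) \cong \overline{M}_{S,s}$, which lets us use charts. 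Uniqueness follows because $\ell'$ and the $\delta'_e$ determine every generator's image.

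The last step is to show that minimality is an open condition and is stable under generisation. By part (3) of the nuclear-curve theorem, specialisation contracts edges and merges levels compatibly, so the minimal objects form a fibered category, and the claim follows from the criterion of \cite{gillam2012logarithmic}. The main obstacle is the first step: handling the non-saturation and divisibility issue for edges of nonzero slope. I would take it first from \citep[Section 4--5]{chen2025tale} via the log-splitting construction.
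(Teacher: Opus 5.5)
Your Step~1 is not a technicality to postpone. It is where the argument breaks, and the fix you suggest cannot work. The smoothing parameter $\delta_e$ of \emph{every} node, sloped or not, is part of the data of a log curve: case (3) of the definition requires $\delta_e\in\overline{M}_S$. So $\delta_e$ cannot be a ``derived element'' of $Q^{gp}\otimes\mathbb{Q}$.

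With the level gaps $\sigma_i$ as generators, there is no tautological object over $Q=\N^{E_0}\oplus\N^N$ as soon as some $\kappa_e>1$. Take a single negative level attached to level $0$ by one edge of slope $2$. The relation is $\sigma_1=2\delta_e$, the minimal monoid is $\N\delta_e$, and there is no object over $\N\sigma_1$ in which $\sigma_1$ is the level gap. For the same reason your Step~2 cannot be completed: compatibility with $\delta'_e$ for sloped edges fails because $\delta'_e$ has no preimage in $Q$.

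The correct candidate is built exactly as in the proof of Theorem~\ref{thm:plhur_rep}. Let $P$ be the monoid generated by \emph{all} $\delta_e$ and all $\sigma_i$, modulo $\kappa_e\delta_e=$ (sum of the $\sigma_i$ over the gaps crossed by $e$), made integral, and saturated if you work with fs log structures. For this $P$ your Steps~2 and~3 go through essentially verbatim. Moreover $P^{gp}$ has rank $\#E_0+N$, since each relation eliminates one sloped edge.

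However, the divisibility you hope for is false in general, so freeness cannot be imported from \citep{chen2025tale} or anywhere else. Take vertices $v_0,v_1,v_2$ on levels $0,-1,-2$, with edges $e_1=v_0v_1$ and $e_2=v_1v_2$ of slope $1$ and $e_3=v_0v_2$ of slope $2$, adding markings for stability. The only relation is $\delta_{e_1}+\delta_{e_2}=2\delta_{e_3}$, and alignment imposes nothing further. So $P$ is the monoid generated by $\delta_{e_1},\delta_{e_2},\delta_{e_3}$ subject to this relation. It is already saturated and has rank $2=\#E_0+N$. But it has three indecomposable elements, so it is not free: $\mathbb{A}[P]$ is the $A_1$-singularity $ab=c^2$. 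Since this holds in either convention, the statement as written is false in this generality.

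What your approach does prove is the following:
\begin{enumerate}
  \item the rank statement, namely that $P^{gp}$ has rank $\#E_0+N$;
  \item freeness under an extra hypothesis. For instance, if every edge of nonzero slope joins adjacent levels, then in the fs setting $P\cong\N^{E_0}\oplus\bigoplus_i\N\cdot(\sigma_i/L_i)$. Here $L_i$ is the least common multiple of the slopes crossing the $i$-th gap, so the free generators are roots of the level gaps rather than the gaps themselves.
\end{enumerate}
The paper gives no argument for this statement beyond the reference to \citep[Section 4]{chen2025tale}. Deferring your key step to that same source therefore leaves the gap open.
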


\subsection{Hurwitz Spaces in Characteristic \texorpdfstring{$0$}{0}}
In this section, we want to recall known results about logarithmic Hurwitz spaces in characteristic $0$ as they are described in \citep[Section 3]{mochizuki1995geometry}. Let $K$ be a field of characteristic $0$ with trivial logarithmic structure, so $M_K =K^\times$.
\begin{definition}
    We say that a map between marked nodal curves 
    \begin{align*}
    f:(C, p_1, \dots, p_n) \to (D, q_1, \dots ,q_n)    
    \end{align*}
    that maps markings to markings does preserve the order if from $i<j$ it follows that for $q_{i'}=f(p_i)$ and $q_{j'}=f(p_j)$ it follows that $i'< j'$.
\end{definition}
\begin{definition}
    A logarithmic Hurwitz cover of type $A=(h,g,N,d, \Lambda)$, where $h,g,N,d$ are integers and $\Lambda=(\lambda_1, \dots, \lambda_b)$ is a vector of integers of length $b$, over a logarithmic scheme $S$ over $K$ is a morphism of logarithmic schemes $f: C \to D$ over $S$ such that
    \begin{enumerate}
        \item $C$ is a logarithmic curve of genus $h$ with $b$ markings over $S$.
        \item $D$ is a logarithmic curve of genus $g$ with $N$ markings over $S$.
        \item $f: C \to D$ is log étale, such that the underlying morphism is  finite, surjective and of degree $p$ mapping markings to markings preserving the order and such that the map at the $i$-th marking is $\lambda_i$ to one. Furthermore, all preimages of a marked point are marked.
    \end{enumerate}
The Moduli functor assigning to a logarithmic scheme the set of logarithmic Hurwitz covers of type $A=(h,g,N,p, \Lambda)$ over it is denoted by $\mathcal{LA}dm_A$.
\end{definition}
\begin{remark}
    This definition implies that the underlying morphism of a logarithmic Hurwitz cover is an admissible cover in the sense of \citep[Section 4]{abramovich2003twisted}. From this, we also see that $\mathcal{LA}dm_A$ is nonempty iff the Riemann-Hurwitz formula 
    \begin{align*}
        2h-2=d(2g-2)+\sum_{i=1}^b (\lambda_i-1)
    \end{align*}
    holds.
\end{remark}
The next theorem is from \citep[Section 3.22.]{mochizuki1995geometry}:
\begin{theorem}
    The Moduli functor $\mathcal{LA}dm_A$ is representable by the proper Deligne-Mumford stack of admissible covers from \citep[Section 4]{abramovich2003twisted} with a logarithmic structure.
\end{theorem}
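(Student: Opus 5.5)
The plan is to show that the functor $\mathcal{LA}dm_A$ is naturally equivalent to the stack of admissible covers of \citep[Section 4]{abramovich2003twisted} equipped with a log structure that we will construct. The argument has three steps.

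\textbf{Step 1: from log covers to admissible covers.} Given a logarithmic Hurwitz cover $f: C \to D$ over $S$, we take the underlying morphism of schemes. We check that it is an admissible cover, as noted in the Remark after the definition.

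Over smooth points, log étaleness of $f$ together with the structure of $\overline{M}_C$ and $\overline{M}_D$ (unmarked points with $\overline{M}_{C,c} \cong \overline{M}_{S,s}$) forces $f$ to be étale away from the markings. Near a marking with $\overline{M}_{C,c}\cong \N\oplus\overline{M}_{S,s}$, the map $\N \to \N$ is multiplication by $\lambda_i$. Since $\operatorname{char} K=0$, this is a Kummer, tamely ramified map $t\mapsto u\,t^{\lambda_i}$.

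At nodes, the induced map of characteristic monoids on both branches is $e_j\mapsto r e_j'$. Compatibility with the smoothing parameters gives $\delta_{f(c)} \mapsto r\,\delta_c$, which is exactly the balanced ramification condition of admissible covers ($xy=t$ upstairs, $uv=t^r$ downstairs, with $u=x^r$, $v=y^r$).

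\textbf{Step 2: from admissible covers to log covers.} Conversely, we show that an admissible cover over a scheme $T$ carries a canonical \emph{minimal} log structure $M_T^{min}$ on $T$, together with log structures on $C$ and $D$ making $f$ a log étale morphism of log curves. Every other logarithmic Hurwitz cover over a log scheme $S$ is then obtained by pulling back along a unique morphism $M^{min}\to M_S$.

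We will do this étale locally at a point $s$. We set $\overline{M}^{min}_s$ to be the saturation of the monoid generated by $\overline{M}$ coming from the base of the target curve $D$, together with roots $\delta_c$ with $r_c\delta_c=\delta_{f(c)}$ for each node $c$ of $C$. Since all $r_c$ over a given node of $D$ coincide, this is free on the nodes of $D$ (after Mochizuki's construction). The chart is given by the smoothing parameters of the nodes of $D$ and the local equations $xy=t_c$ upstairs.

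Minimality and the universal property will be verified as in \cite{gillam2012logarithmic}: any log structure $M_S$ making $f$ log étale must contain the images of the $\delta_c$, so it receives a unique map from $M^{min}$.

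\textbf{Step 3: conclusion.} Combining the two steps, $\mathcal{LA}dm_A$ is represented by the admissible cover stack with log structure $M^{min}$. That stack is a proper Deligne–Mumford stack by \citep{abramovich2003twisted}, and $M^{min}$ glues because it is canonical.

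\textbf{Main obstacle.} I expect the hardest part to be Step 2. There we must show that the locally defined minimal log structures are independent of choices and glue, and that the node equations $u=x^r$ admit log lifts uniquely. Characteristic $0$ is essential here: the roots of units needed to adjust $u=\epsilon x^r$ exist étale locally, and the relevant Kummer extensions are étale. I would first attempt this by reducing to the universal local model of Mochizuki's paper \citep[Section 3]{mochizuki1995geometry} and citing its uniqueness.
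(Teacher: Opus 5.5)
\textbf{The gap is in Step 2.} The sentence ``all $r_c$ over a given node of $D$ coincide'' is false. Balancedness only forces the two branches of a \emph{single} node of $C$ to have the same index. Distinct nodes of $C$ over the same node $q$ of $D$ may have any indices summing to the degree, e.g.\ $(2,1)$ for $p=3$, or $(2,3)$ and $(2,2,1)$ for $p=5$. The paper points this out in Lemma~\ref{cor:min_log_stru_cov}.

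For $(2,3)$, the monoid generated by $\delta_{c_1},\delta_{c_2}$ subject to $2\delta_{c_1}=\delta_q=3\delta_{c_2}$ is the numerical monoid $\langle 2,3\rangle\subset\mathbb{N}$, which is not free. For $(2,2,1)$ its groupification has $2$-torsion.

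Saturating does give one free generator per node of $D$, namely $\delta_q/\operatorname{lcm}_c(r_c)$ rather than $\delta_q$. But then the other half of your Step 2 breaks. At such a point the Harris--Mumford local ring contains the cusp $K[[s_1,s_2]]/(s_1^2-s_2^3)$, where $s_i$ is the smoothing parameter of $c_i$. A saturated log structure making $f$ log \'etale must contain $w=\delta_{c_1}-\delta_{c_2}$, since $2w=\delta_{c_2}$ and $3w=\delta_{c_1}$. That means a function $\sigma$ with $\sigma^2=s_2$ and $\sigma^3=s_1$ up to units, and no such function exists on the cusp. So it is not true that every admissible cover over an arbitrary scheme $T$ carries your $M^{min}$. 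Likewise, your universal property (``any $M_S$ making $f$ log \'etale receives a unique map from $M^{min}$'') fails whenever $M_S$ is not saturated.

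\textbf{You have to commit to one convention and carry it through.}

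With fine log structures, the minimal monoid is the unsaturated amalgam $M^C_S\oplus_{\mathcal{O}_S^\times}M^D_S/\sim$ with $e_q\sim n_re_r$. This is exactly what the paper records right after the statement. The Harris--Mumford local equations $t=(\text{unit})\cdot s_c^{r_c}$ then define the required map to $\mathcal{O}_T$ on any base, and the charts are $\operatorname{Spec}K[\overline{M}^f_S]$. But there is no freeness.

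With fs log structures, the underlying stack of minimal objects is the normalization of the Harris--Mumford stack. It is this identification, not a log structure on every admissible cover, that has to be proved.

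Finally, the paper does not prove the theorem itself; it cites \citep[Section 3.22]{mochizuki1995geometry}. Once corrected, your Steps 1--3 are a sketch of that reference, and your fallback of invoking Mochizuki's local model is the paper's route.
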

Mochizuki further describes the minimal logarithmic structure on the base $S$ for a logarithmic admissible cover in the sense of \cite{gillam2012logarithmic} to be the following: Let $M^C_S$ be the minimal logarithmic structure on the base for the source curve $C$ and $M^D_S$ be the minimal logarithmic structure on the base for the target curve $D$. For a logarithmic Hurwitz cover $f: C \to D$, the minimal logarithmic structure $M_S^f$ on the base is $M^C_S \oplus_{\mathcal{O}_S^\times} M^D_S/ \sim$. The relation $\sim$ identifies for each pair $(r,q)$ where $r$ is a marked point or node of $C$ with corresponding parameter $e_r \in M^C_S$ that maps to a marked point or node $q$ of $D$ with corresponding parameter $e_q \in M^D_S$ by multiplicity $n_r$, the smoothing parameter $e_q $ with $n_r e_r$.  

Harris and Mumford describe in \citep[p.62]{harris1982kodaira} an étale neighborhood of a logarithmic Hurwitz cover $f: C \to D$ in the underlying stack of $\mathcal{LA}dm_A$. From their description it follows that these étale neighborhoods are locally isomorphic to $Spec(K[\overline{M}_S^f])$, so in particular $\mathcal{LA}dm_A$ is locally isomorphic to a toric variety. The next theorem now follows by \citep[Theorem 4.7.]{kato1996log}, stating that toric varieties are logarithmically smooth over a ground field with trivial logarithmic structure. It is also stated in \citep[Section 3.22.]{mochizuki1995geometry}:
\begin{theorem} \label{thm: gen_fibre_smooth}
    The Moduli stack $\mathcal{LA}dm^d_A$ is of dimension $3g-3+N$ and log smooth over $K$ with trivial logarithmic structure.
\end{theorem}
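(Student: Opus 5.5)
The dimension count and the log smoothness are proved separately; both rest on the local toric description of Harris--Mumford together with the minimal logarithmic structure $M_S^f$ described above.

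\textbf{Dimension.} First I pass to the map $\mathcal{LA}dm_A \to \mathcal{M}_{g,N}^{log}$ that remembers the target curve $D$ with its $N$ markings. This map is quasi-finite. Over a smooth marked target, a cover with prescribed ramification over the markings and étale elsewhere is determined, up to finitely many choices, by a monodromy representation of $\pi_1$ of the punctured curve. This uses characteristic $0$ and the Riemann existence theorem. I will also check that it is surjective onto the open part $\mathcal{M}_{g,N}$ whenever $\mathcal{LA}dm_A$ is nonempty, at least on the components that meet the smooth locus. Since the target has dimension $3g-3+N$, the smooth-target locus then has dimension $3g-3+N$. It remains to show that this locus is dense. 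For that I use the Harris--Mumford local picture: near a cover with nodes, the stack is étale locally $\operatorname{Spec}(K[\overline{M}_S^f])$ times a smooth factor. Inside this chart the smooth-target locus is the open torus, where all node parameters are invertible, and the open torus is dense.

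\textbf{Log smoothness.} Here I make precise that the local model is $\mathbb{A}[\overline{M}_S^f]\times \mathbb{A}^r$ with its toric log structure, and that this log structure agrees with the minimal one, $M_S^f$. Concretely, the local deformation parameters at a node of $D$ are the smoothing parameter $t_q$ of $D$ and the smoothing parameters $t_r$ of the nodes $r$ of $C$ above it, subject to $t_r^{n_r}=t_q$. This is exactly the relation $\sim$ imposed in $M_S^f$. The unmarked deformations are unobstructed and supply the smooth factor $\mathbb{A}^r$. I then check that $\overline{M}_S^f$ is fine and saturated. If it is not, I would replace it by its saturation, which corresponds to normalizing the admissible cover stack as in Abramovich--Corti--Vistoli; the statement should be read for that normalization. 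Once this holds, \citep[Theorem 4.7.]{kato1996log} gives that the toric chart is log smooth over $K$ with trivial log structure. Since log smoothness is étale local, the whole stack is log smooth.

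\textbf{Main obstacle.} The delicate step is identifying the Harris--Mumford étale chart with $\operatorname{Spec} K[\overline{M}_S^f]$ compatibly with the minimal log structure, including the saturation subtlety just mentioned. The root-extraction relations $t_r^{n_r}=t_q$ at different nodes over the same node of $D$ generate a non-saturated monoid. I would handle this by working with the saturated version, so that the chart is a normal toric variety.
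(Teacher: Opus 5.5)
Your proposal follows the paper's route: the Harris--Mumford local description makes $\mathcal{LA}dm_A$ étale locally the toric chart $\operatorname{Spec}(K[\overline{M}_S^f])$, times a smooth factor that the paper leaves implicit, and Kato's theorem then gives log smoothness over $K$ with trivial log structure; the paper takes the dimension $3g-3+N$ directly from Mochizuki, whereas you derive it from quasi-finiteness over $\overline{\mathcal{M}}_{g,N}$ and density of the open torus. Your saturation detour is harmless but unnecessary, because Kato's chart criterion applies to fine, not necessarily saturated, charts, and the torsion of $(\overline{M}_S^f)^{gp}$ has order invertible in $K$ since $\operatorname{char} K = 0$.
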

\subsection{Exact and Quasi-Exact Differential Forms}
In this subsection, we study some classes of differential forms on curves that are motivated from Berkovich analytic geometry and are introduced in \citep[3.4.7]{brezner2017lifting}. They will be relevant when discussing covers in positive characteristic. Note that we call \emph{quasi-exact} those forms that in the work of Brezner and Temkin were called \emph{mixed}. While Brezner and Temkin only introduced this notion for the geometric Frobenius, we will need it in more generality:
\begin{definition} \label{def:exact_quexact}
    Let $f: C \to D$ be a finite surjective morphism of degree $p$ between log smooth curves over a $\Z_{(p)}$-log scheme $S$. Furthermore, let $q$ be a point of characteristic $p$ of $C$ which is in the generic locus and let $f$ be inseparable at $q$. We consider, $\psi$ which is a meromorphic section, in the sense of \cite[\href{https://stacks.math.columbia.edu/tag/01X1}{Tag 01X1}]{stacks-project}, of the relative dualizing sheaf $\omega^{log}_f$ that is defined to be
    \begin{align*}
        \omega^{log}_f \coloneqq (f^*\Omega_D^{log})^\vee \otimes_{\mathcal{O}_C} \Omega^{log}_C.
    \end{align*} 
    
    Such sections will be called \emph{bivariant forms}. Étale locally around $q$ the morphism $f$ is given by a ring map $R[x] \to T$ over $R$, with $T$ étale over $R[y]$ for some $y$ such that $f$ is given by $x \mapsto y^p+g$ with $g$ vanishing at $f(q)$. On these neighborhoods, the form $\psi$ can be written as $\psi= (d_Dx)^\vee \otimes \phi$ for some meromorphic section $\phi$ of $\Omega_C$.
    \begin{enumerate}
        \item $\psi$ is called \emph{exact} at $q$ if one can achieve $\phi=d_Ch$ for some meromorphic function $h$ on $C$.
        \item $\psi$ is called \emph{quasi-exact} at $q$ if one can achieve $\phi=u (y^{p-1}d_Cy+d_Ch)$ for some meromorphic function $h$ on $C$ and $u$ a unit in $R$.
    \end{enumerate}
    The differential form $\psi$ will be called \emph{exact} or \emph{quasi-exact} respectively at a component of $C$, if it is exact or quasi-exact at all smooth points of that component.
\end{definition}
\begin{remark}
    A priori, it is not clear that this definition is independent of choices. This will follow later from our discussion of the Cartier operator in Theorem~\ref{thm: rel_car_prop}.
\end{remark}

We will need this formulation for the definition of our stack and will need to analyze it further for the case of $f$ being the relative Frobenius map. For this, recall the following diagram linking different Frobenii:
\begin{center}
    \begin{tikzcd}
C
\arrow[drr, bend left, "F_{C}"]
\arrow[ddr, bend right]
\arrow[dr, "F_{C/S}"] & & \\
& C^{(p/S)} \arrow[r, "(F_S)_C"] \arrow[d ]
& C \arrow[d] \\
& S \arrow[r, "F_S"]
& S
\end{tikzcd}
\end{center}

In the following, we will consider the case of smooth curves over an algebraically closed field $k$, so we can identify the Frobenius twist of a curve with the curve itself. An important tool to study exactness is the Cartier operator that was first introduced in \cite{cartier1957nouvelle}:
\begin{definition}
   Let $C/k$ be a smooth curve and $\omega$ be a meromorphic differential form on an open subset $U$ of $C$. Furthermore, let $t$ be a local parameter for a point $x$ on $C$, so a uniformizer of the local ring $\mathcal{O}_{C,x}$. Then étale locally around $x$ one can write
   \begin{align*}
      \omega = \sum_{i=0}^{p-1} f_i^p t^i dt. 
   \end{align*}
   for some rational functions $f_i$. The Cartier operator $\mathfrak{c}$ maps $\omega$ to the differential form locally given by $f_{p-1}dt$.
\end{definition}
 The fact that our local description glues to a morphism of differential forms is a classical result that can be found in \cite{cartier1957nouvelle}.

    When working in families, one can define the Cartier operator similarly by
    \begin{align*}
        \sum_i a_i t^i dt \mapsto \sum_{j}{a_{(j-1)p -1}} t^j dt
    \end{align*}
    The differential form we constructed will not be a differential form on $C$, but a differential form on the Frobenius twist $C^{(p/S)}$, in genus $0$ however, they are isomorphic. 
\begin{definition}\label{def: cartier_family}
    The assignment defined above will be called the relative Cartier operator. It coincides with the Cartier operator coming from an algebraically closed field after composing with the isomorphism $C^{(p/k)} \cong C$. This allows us to denote the relative Cartier operator by $\mathfrak{c}$ as well.
\end{definition}
    It is a classical result that $\mathfrak{c}(\omega)$ is well defined, so in particular is independent of the choice of local coordinates as we will see in the next theorem.

\begin{theorem} \label{thm: prop_cartier}
Let $\omega, \omega'$ be rational differential forms on $U \subseteq C$ and $f$ be a meromorphic function on $U$. Then the Cartier Operator glues to a morphism between sheaves of meromorphic differentials on $C$:
\begin{enumerate}
    \item $\omega$ is the differential of a meromorphic function iff $\mathfrak{c}(\omega)=0$.
    \item $\mathfrak{c}(\omega + \omega') = \mathfrak{c}(\omega)+\mathfrak{c}(\omega')$.
    \item $\mathfrak{c}(f^p \omega)= f \mathfrak{c}(\omega)$.
    \item $\mathfrak{c}$ maps $\Omega_{C/S}(\sum_{i=1}^n m_i p_i)$ surjectivly to $\Omega_{C^{(p/S)}/S}(\sum_{i=1}^n \left \lceil{\frac{m_i}{p}}\right \rceil p_i')$
     where $p_i'$ is the image of the $i$-th marking $p_i$ under the relative Frobenius map.
\end{enumerate}
\end{theorem}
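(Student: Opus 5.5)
We will work étale locally around a point, where $t$ is a local parameter and every meromorphic form can be written uniquely as $\omega=\sum_{i=0}^{p-1} f_i^p t^i\,dt$. Uniqueness holds because $\mathcal{O}$ is free over $\mathcal{O}^p$ with basis $1,t,\dots,t^{p-1}$; in families we use the relative Frobenius twist instead. The first task is well-definedness and gluing. For this we will use the classical intrinsic characterization of $\mathfrak{c}$ as the unique $\mathcal{O}^p$-semilinear additive map that kills exact forms and satisfies $\mathfrak{c}(f^{p-1}df)=df$, or equivalently $\mathfrak{c}(dt/t)=dt/t$. Since this characterization does not refer to coordinates, the local definitions agree on overlaps and therefore glue. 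Concretely, we will check that the local formula has these properties and that they determine the map. In the local formula, $t^i\,dt=d(t^{i+1}/(i+1))$ is exact for $0\le i\le p-2$, while $t^{p-1}dt=t^p\,dt/t$. This reduces uniqueness to the relation between the two normalizations. The remaining point, that the intrinsic characterization is coordinate independent, we take from \cite{cartier1957nouvelle}.

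Properties (2) and (3) follow at once from the local formula. Additivity holds because the decomposition is additive: $(f_i+g_i)^p=f_i^p+g_i^p$ in characteristic $p$. For (3), multiplying by $f^p$ replaces each $f_i$ by $ff_i$. For (1), first suppose $\omega=dh$ and write $h=\sum_{j=0}^{p-1} h_j^p t^j$. Then $dh=\sum_{j\ge1} j h_j^p t^{j-1}dt$, so the coefficient of $t^{p-1}dt$ vanishes and $\mathfrak{c}(\omega)=0$. Conversely, if $f_{p-1}=0$, then $\omega=\sum_{i\le p-2} f_i^p t^i dt = d\bigl(\sum_i f_i^p t^{i+1}/(i+1)\bigr)$. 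The same computation works in families, since $d(a^p)=0$ for any section $a$.

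For (4), we localize at a marking $p_i$ with $t$ a parameter at $p_i$ and compute a pole order. Write $\omega=\sum_k a_k t^k dt$ with $a_k=0$ for $k<-m_i$. Then $\mathfrak{c}(\omega)=\sum_j a_{jp-1}\,t'^j\,dt'$, where $t'$ is the parameter at the image point $p_i'$, so that $t=t'$ under $t'^{\,}=t^p$ pulled back. A nonzero term has $jp-1\ge -m_i$, which forces $j\ge \lceil (1-m_i)/p\rceil$. The pole order of $\mathfrak{c}(\omega)$ is therefore at most $-\lceil(1-m_i)/p\rceil=\lfloor (m_i-1)/p\rfloor$. This is at most $\lceil m_i/p\rceil$, so the image lies in $\Omega_{C^{(p/S)}/S}(\sum \lceil m_i/p\rceil p_i')$, and away from the markings $\mathfrak{c}$ preserves regularity.

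Surjectivity is the step we expect to be the main obstacle. Locally it is easy: $\mathfrak{c}(a^p t^{jp-1}dt)=a\,t'^j dt'$, so every local section of the target lifts to a section with pole order $\le p\lceil m_i/p\rceil+1$. The difficulty is that this may exceed $m_i$. We will reduce to the bound $\lfloor (m_i-1)/p\rfloor$ and read the statement with that normalization, or accept the mild discrepancy between the two. Globally, we will show surjectivity as a map of sheaves, which can be checked stalkwise, where the local lift applies. Since surjectivity is a statement about sheaves, no $H^1$ obstruction enters. Finally, in families the base-change compatibility of $F_{C/S}$ lets us check everything on geometric fibers plus flatness.
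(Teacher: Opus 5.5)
\textbf{There is a genuine gap, in part (4).} Your treatment of (1)--(3) via the local decomposition $\omega=\sum_{i=0}^{p-1} f_i^p t^i\,dt$ is fine. It is also more explicit than the paper, which simply cites Cartier, Seshadri, Katz and Achinger et al.\ for all four items. For coordinate independence you defer to Cartier, as the paper does. (As a coordinate-free characterization you would need $\mathfrak{c}(df/f)=df/f$ for every $f$, not only for the chosen parameter $t$; since you cite Cartier for this, it is not a gap relative to the paper.)

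The problem in (4) is an index slip in your local formula. By your own normalization, $t^{jp-1}\,dt=(t^{j})^p\,dt/t$, hence $\mathfrak{c}(t^{jp-1}dt)=t'^{\,j}\,dt'/t'=t'^{\,j-1}\,dt'$, not $t'^{\,j}\,dt'$. For $j=1$ this reads $\mathfrak{c}(t^{p-1}dt)=dt'$. Your formula would make $\mathfrak{c}(dt/t)$ regular, which contradicts $\mathfrak{c}(dt/t)=dt/t$. Correspondingly, your bound $\lfloor (m_i-1)/p\rfloor$ already fails for $m_i=1$, $\omega=dt/t$. This slip is what creates the apparent obstruction to surjectivity. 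Your proposed remedy, to read the statement with a different normalization or accept the discrepancy, does not prove the statement; it would replace a true statement by a false one.

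With the correct formula the computation closes exactly. A nonzero term requires $jp-1\ge -m_i$, i.e.\ $j\ge\lceil (1-m_i)/p\rceil$. So the pole order of $\mathfrak{c}(\omega)$ at $p_i'$ is at most $1-\lceil (1-m_i)/p\rceil=1+\lfloor (m_i-1)/p\rfloor=\lceil m_i/p\rceil$.

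For surjectivity, write a local section of the target near $p_i'$ as $g\,t'^{-c}dt'$ with $c=\lceil m_i/p\rceil$ and $g$ regular. Then $F_{C/S}^*(g)\,t^{(1-c)p-1}dt$ is a preimage. Its pole order is $cp-p+1$, which is at most $m_i$: write $m_i=qp+r$ with $0\le r<p$ and treat $r=0$ and $r>0$ separately. Away from the markings, $g\,dt'=\mathfrak{c}(F_{C/S}^*(g)\,t^{p-1}dt)$. Hence the map is surjective stalkwise exactly onto $\Omega_{C^{(p/S)}/S}(\sum_i\lceil m_i/p\rceil p_i')$, and the obstacle you identified disappears.

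Finally, you do not need to pass to geometric fibers in families. $\mathcal{O}_C$ is locally free over $\mathcal{O}_{C^{(p/S)}}$ with basis $1,t,\dots,t^{p-1}$, so the same computation works verbatim with $F_{C/S}^*$ in place of $p$-th powers.
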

\begin{proof}
These are well known results that can be found in the classical literature for the case $S=k$, for example in \cite{cartier1957nouvelle}, \cite{cartier1958questions} or \cite{seshadri1958operation}. The case for $S$ being a general base was introduced in \citep[Section 7]{katz1970nilpotent} and more extensively explained in \citep[Section 4.2.]{achinger2021global}.
\end{proof}
\begin{remark} \label{rem: cartier_linear}
    The findings from the last lemma tell us that we can consider the Cartier operator to be a surjective map of vector bundles
    \begin{align*}
        \mathfrak{c} \!: (F_{C/S})_*\Omega_{C/S}(\sum_{i=1}^n m_i p_i) \to \Omega_{C^{(p/S)}/S}\left(\sum_{i=1}^n \left \lceil{\frac{m_i}{p}}\right \rceil p'_i\right)
    \end{align*}
   
\end{remark}
We will need the Cartier operator in this generality only to see that the locus of zeroes and poles of exact and quasi-exact forms define a closed substack of $\mathcal{M}_{g,n}$.

Using the Cartier operator, we can now prove that when working over an algebraically closed ground field $k$ and $f$ being the relative Frobenius map, then our notion of exactness and quasi-exactness from \ref{def:exact_quexact} agrees with the notion of exactness and quasi-exactness coming from \citep[3.4.7.]{brezner2017lifting}. We do an explicit computation for the case of $\mathbb{A}^1$ and the general case follows from Theorem~\ref{thm: prop_cartier} as it tells us that we can check exactness étale locally.
\begin{lemma}
    When working with the affine line over an algebraically closed field $k$, we choose coordinates $x$ and $y$ such that the relative Frobenius map $C=\mathbb{A}^1_k \to \mathbb{A}^1_k=D$ comes from the ring map
    \begin{align*}
        k[x] &\to k[y] \\
        x &\mapsto y^p
    \end{align*}
    Then a bivariant differential form $\psi$ is exact iff it can be written as
    \begin{align*}
        (d_D g)^\vee \otimes d_C h \
    \end{align*}
    for $g \in k(x)$ and $h \in k(y)$. 
    
    The form is quasi-exact, iff it can be written as
    \begin{align*}
        (d_D w^p)^\vee \otimes u(d_C h + w^{p-1}d_Cw)
    \end{align*}
    for $h \in k(y), u \in k^\times$, and $w \in k(y)$ that is not a $p$-th power.
\end{lemma}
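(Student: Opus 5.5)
The plan is to reduce both characterizations to Definition~\ref{def:exact_quexact} by a change of coordinate on $D$, using Theorem~\ref{thm: prop_cartier} to control the exactness of the $\Omega_C$-factor. First I record how a bivariant form transforms under such a change. If $g \in k(x)$ is a rational function with $d_D g \neq 0$, then $d_D g = g'(x)\, d_D x$, so $(d_D g)^\vee = g'(x)^{-1} (d_D x)^\vee$. Pulling back along $x \mapsto y^p$ turns $g'(x)$ into $g'(y^p) = \tilde g(y)^p$, a $p$-th power in $k(y)$, where $\tilde g$ is obtained by applying $a \mapsto a^{1/p}$ to the coefficients of $g'$ (using that $k$ is perfect). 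Hence
\begin{align*}
(d_D g)^\vee \otimes \phi = (d_D x)^\vee \otimes \tilde g^{-p}\phi ,
\end{align*}
and multiplying $\phi$ by a $p$-th power is exactly the operation controlled by property (3) of Theorem~\ref{thm: prop_cartier}.

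For the exact case, the direction ``exact $\Rightarrow$ of the given form'' is immediate: if $\phi = d_C h$, take $g = x$. Conversely, suppose $\psi = (d_D g)^\vee \otimes d_C h$. By the computation above $\psi = (d_D x)^\vee \otimes \tilde g^{-p} d_C h$. Properties (1) and (3) give $\mathfrak{c}(\tilde g^{-p} d_C h) = \tilde g^{-1}\mathfrak{c}(d_C h) = 0$, so $\tilde g^{-p} d_C h = d_C h'$ for some $h'$. Thus $\psi$ is exact with respect to the coordinate $x$.

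For the quasi-exact case, take $y$ itself as $w$ (it is not a $p$-th power) and $x = y^p = w^p$; then Definition~\ref{def:exact_quexact}(2) is literally of the stated form with $u \in k^\times$. For the converse, start from $\psi = (d_D w^p)^\vee \otimes u(d_C h + w^{p-1} d_C w)$ with $w$ not a $p$-th power. Write $w^p = g(x)$ with $g \in k(x)$; here $d_D g \neq 0$ precisely because $w \notin k(y)^p$, i.e. $g \notin k(x)^p$. The transformation rule gives
\begin{align*}
\psi = (d_D x)^\vee \otimes u\,\tilde g^{-p}\,(d_C h + w^{p-1} d_C w).
\end{align*}
The remaining task is to show that $\tilde g^{-p}(d_C h + w^{p-1} d_C w)$ equals $c\,(y^{p-1}dy + d_C h')$ for some constant $c \in k^\times$ and some $h'$. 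For this I compute Cartier operators. We have $\mathfrak{c}(w^{p-1}dw) = dw^{1/p}$ after the identification $C^{(p)} \cong C$ (check it first for $w = y$), and $\mathfrak{c}(y^{p-1}dy) = dy$. Therefore the condition reduces to
\begin{align*}
\tilde g^{-1}\, d(w)^{(1/p)} = c\, dy
\end{align*}
under the Frobenius identification. Here I use the chain rule: $g'(x)$ corresponds to $dw^p/dx$, and $\tilde g$ relates to $dw/dy$ after twisting. By (1), equality of Cartier images is equivalent to the difference being exact, which gives the desired $h'$.

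I expect the main obstacle to be this last chain-rule bookkeeping: verifying that the ratio is genuinely a constant rather than merely some function, keeping track of the Frobenius twist identifications, and handling the poles of $\tilde g^{-1}$, so as to justify that $u$ may be taken to lie in $k^\times$. If an exact constant does not come out, I would instead show directly that the $\mathfrak{c}$-image of the $\Omega_C$-factor is a nonzero multiple of the image of $dx$-dual data, and then read off the quasi-exactness from that.
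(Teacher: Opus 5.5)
Your reduction is the paper's: rewrite $(d_Dg)^\vee=g'(x)^{-1}(d_Dx)^\vee$, note that $g'(y^p)=\tilde g^p$ is a $p$-th power, and combine $\mathfrak{c}(f^p\omega)=f\,\mathfrak{c}(\omega)$ with the criterion that $\mathfrak{c}(\omega)=0$ iff $\omega$ is exact. The exact case and the direction ``quasi-exact $\Rightarrow$ of the stated form'' are complete and agree with the paper. The converse in the quasi-exact case is not proved, however. You stop at the step that carries all the content: showing that $\mathfrak{c}(\tilde g^{-p}w^{p-1}d_Cw)$ is a \emph{constant} multiple of $d_Cy$. You leave this as an open obstacle with a fallback plan.

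Moreover, the formula you propose for that step is wrong. With the paper's Cartier operator ($\sum f_i^p t^i\,dt\mapsto f_{p-1}dt$, which is $p^{-1}$-linear) one has $\mathfrak{c}(w^{p-1}d_Cw)=d_Cw$, not $d(w^{1/p})$. To see this, write $w^{p-1}d_Cw=w^p\cdot d_Cw/w$ and use $\mathfrak{c}(d_Cw/w)=d_Cw/w$; for $p=2$ and $w=ay$ you can check directly that $\mathfrak{c}(a^2y\,dy)=a\,dy$. With your $d(w^{(1/p)})$, the ratio $\tilde g^{-1}\partial_y(w^{(1/p)})$ is in general not constant (take $p=3$, $w=y+ay^2$, $a\notin\mathbb{F}_3$). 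That is why no constant seemed to emerge.

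The missing identity is $\tilde g=\partial_yw$. Here $g$ is $w$ with its coefficients raised to the $p$-th power, and differentiation commutes with this operation because $i^p=i$ in $\mathbb{F}_p$ (the paper verifies $(\partial_yw)^p=\partial_x(w^p)$ via partial fractions). Hence $\tilde g^p=g'(y^p)=(\partial_yw)^p$, and since Frobenius on $k(y)$ is injective, $\tilde g=\partial_yw$. It follows that $\mathfrak{c}(\tilde g^{-p}w^{p-1}d_Cw)=(\partial_yw)^{-1}d_Cw=d_Cy$, while $\mathfrak{c}(\tilde g^{-p}d_Ch)=0$. So $\phi=u\tilde g^{-p}(d_Ch+w^{p-1}d_Cw)$ satisfies $\mathfrak{c}(\phi)=\mathfrak{c}(u\,y^{p-1}d_Cy)$. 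By property (1), $\phi-u\,y^{p-1}d_Cy=d_CH$ for some $H$, that is, $\phi=u\bigl(y^{p-1}d_Cy+d_C(H/u)\bigr)$ with the same $u$, so the constant is $1$. This is exactly the paper's computation.

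If you want to avoid the chain-rule bookkeeping, use the relative Cartier operator $F_*\Omega_C\to\Omega_{C^{(p)}}$ instead. For it, $\mathfrak{c}(w^{p-1}d_Cw)=d_Dg$ whenever $F^*g=w^p$, so $(d_Dg)^\vee$ pairs with this image to give exactly $1$.
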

\begin{proof}
    By choosing $g=x$ in the exact case and $w=y$ in the quasi-exact case, we see that exact and quasi-exact forms from Definition~\ref{def:exact_quexact} can be written as stated.
    
   We are left to show that forms that can be written as in the statement are exact or quasi-exact respectively in the sense of Definition~\ref{def:exact_quexact}. One can write
    \begin{align*}
        (d_{D} g)^\vee \otimes (d_{C} h) = (d_D x)^\vee \otimes \frac{d_C h}{\frac{\partial}{\partial x}g}
    \end{align*}
    But $\frac{\partial}{\partial x}g$ as a function in $y$ is a $p$-th power, so we have
    \begin{align*}
        \mathfrak{c}\left(\frac{d_C h}{\frac{\partial}{\partial x}g}\right) = \frac{\mathfrak{c}(d_C h)}{\frac{\partial}{\partial x}g}=0
    \end{align*}
    so $\frac{d_C h}{\frac{\partial}{\partial x}g}$ is the derivative of a function as desired. 

    For the case of quasi-exact forms it holds
    \begin{align*}
        (d_D w^p)^{\vee} \otimes (d_{C} h+ w^{p-1}d_
        {C}w)= (d_{D} y^p)^\vee \otimes \frac{d_Ch+ (\frac{\partial}{\partial y} w)w^{p-1}(d_Cy)}{\frac{\partial}{\partial x} w^{p}}
    \end{align*}
    and the statement follows once we have
    \begin{align*}
        \mathfrak{c} \left(\frac{(\frac{\partial}{\partial y} w)w^{p-1}(d_Cy))}{\frac{\partial}{\partial x} w^{p}}\right) = d_{C} y
    \end{align*}
    For this note 
    \begin{align*}
        \mathfrak{c} \left(\frac{(\frac{\partial}{\partial y} w)w^{p-1}(d_Cy))}{\frac{\partial}{\partial x} w^{p}}\right) =
        \frac{w}{(\frac{\partial}{\partial x} w^{p})^{\frac{1}{p}}} \mathfrak{c} \left(\frac{(\frac{\partial}{\partial y} w)(d_Cy))}{w}\right) =  \frac{(\frac{\partial}{\partial y} w)}{(\frac{\partial}{\partial x} w^{p})^{\frac{1}{p}}} d_{C} y
    \end{align*}
    and after writing $w$ in partial fraction decomposition we have
    \begin{align*}
        w &= \sum_i \sum_j a_{i,j} (y-c_j)^i \\
        w^p &= \sum_i \sum_j a_{i,j}^p (x-c^p_j)^i \\
        \frac{\partial}{\partial y} w &= \sum_i \sum_j ia_{i,j} (y-c_j)^{i-1} \\
        \frac{\partial}{\partial x} w^p&= \sum_i \sum_j ia_{i,j}^p(x-c_j^p)^{i-1}
    \end{align*}
    So as the Frobenius morphism fixes $\mathbb{F}_p$, we have 
    \begin{align*}
        (\frac{\partial}{\partial y} w)^p= \left(\sum_i \sum_j ia_{i,j} (y-c_j)^{i-1} \right)^p =\sum_i \sum_j ia_{i,j}^p(x-c_j^p)^{i-1}=\frac{\partial}{\partial x} w^p
    \end{align*}
    and the statement follows.
\end{proof}

As we will work with bivariant differential forms, we will twist the Cartier operator so we can analyze them by the same techniques.

\begin{definition} \label{def:twisted_cartier}
    Twisting the Cartier operator by $(\Omega_{C^{p/S}/S})^\vee$ gives a map of vector bundles on $C$
    \begin{align*}
        \mathfrak{tc} \!: (F_{C/S})_* \Omega_{F_{C/S}}(\sum_{i=1}^n m_i p_i) \to \mathcal{O}_{C^{(p/S)}} \left(\sum_{i=1}^n \left \lceil{\frac{m_i}{p}}\right \rceil p'_i\right).
    \end{align*}
\end{definition}

For concrete calculations we want to perform in following chapters, working with the projective line over the base $k$ is enough, so for the rest of this section we will focus on the case $C=\Pjk$.
\begin{lemma} \label{lem: car_sur}
    On global sections, the Cartier operator for the projective line
       \begin{align*}
        H^0(\mathfrak{c}) \!: H^0\left((F_{\Pjk/k})_*\Omega_{\Pjk}(\sum_{i=1}^n m_i p_i)\right) \to H^0\left(\Omega_{\Pjk}\left(\sum_{i=1}^n \left \lceil{\frac{m_i}{p}}\right \rceil p_i\right)\right)
    \end{align*}
    is surjective.
\end{lemma}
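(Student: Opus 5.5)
Since the Cartier operator is surjective as a map of sheaves (Theorem~\ref{thm: prop_cartier}(4), Remark~\ref{rem: cartier_linear}), I plan to take the kernel and reduce surjectivity on global sections to the vanishing of an $H^1$. Set $E=\sum_i m_i p_i$ and $E'=\sum_i \lceil m_i/p\rceil p_i$ on $\Pjk$. This gives a short exact sequence of coherent sheaves on $\Pjk$:
\begin{align*}
0 \to \mathcal{K} \to (F_{\Pjk/k})_*\Omega_{\Pjk}(E) \xrightarrow{\mathfrak{c}} \Omega_{\Pjk}(E') \to 0 .
\end{align*}
Its long exact cohomology sequence shows that $H^0(\mathfrak{c})$ is surjective as soon as $H^1(\Pjk,\mathcal{K})=0$.

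\textbf{Step 1: identify $\mathcal{K}$.} By Theorem~\ref{thm: prop_cartier}(1), $\mathcal{K}$ consists of the exact forms in $(F_{\Pjk/k})_*\Omega_{\Pjk}(E)$. Since $F$ is finite, $\mathcal{K}$ is a torsion-free coherent sheaf on $\Pjk$, hence a vector bundle, and therefore a sum of line bundles $\bigoplus_j \mathcal{O}(a_j)$. It has rank $p-1$, since locally the exact forms are spanned by $\{f^p t^i\,dt\}_{i\le p-2}$ modulo the $p$-th power structure. It then suffices to show $a_j\ge -1$ for all $j$.

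\textbf{Step 2: compute the degrees.} I would get the degree from Riemann--Roch. First, $\deg (F_*\Omega(E)) = \deg\Omega(E) + (p-1)(1-0)$ by Grothendieck--Riemann--Roch for finite maps, which gives $-2+\sum m_i + p-1$. Next, $\deg \Omega(E') = -2+\sum\lceil m_i/p\rceil$. So
\begin{align*}
\deg\mathcal{K} = p-1+\sum_i\left(m_i-\lceil m_i/p\rceil\right).
\end{align*}
The degree alone does not control the splitting type, so I would instead bound the $a_j$ from below via an explicit local description. Using the coordinate $y$ on $\A^1$ (point at infinity added) and partial fractions in $y$, a section of $\mathcal{K}$ is $dh$ for $h$ a rational function with prescribed pole bounds. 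Those pole bounds are $\le m_i-1$ at $p_i$ when $p\nmid (m_i-1)$, and the right analogue otherwise.

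\textbf{Step 3: avoid the splitting type where possible.} A cleaner route, which I would try first, avoids the splitting of $\mathcal{K}$ entirely and proves surjectivity directly. Theorem~\ref{thm: prop_cartier}(3) gives $\mathfrak{c}(f^p t^{p-1}dt)=f\,dt$. A global section of $\Omega(E')$ is a rational form $\eta=g\,dy$, with $g$ having poles of order at most $\lceil m_i/p\rceil$ at $p_i$ (shifted suitably at $\infty$). I would take the preimage $g^p y^{p-1}dy$, or its partial-fraction analogue: for each polar part $c(y-c_i)^{-k}dy$, use $c^p(y-c_i)^{-kp}(y-c_i)^{p-1}dy$, which has a pole of order $kp-p+1\le m_i$ when $k\le\lceil m_i/p\rceil$. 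The polynomial part is treated similarly at $\infty$, using the coordinate $1/y$.

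\textbf{Main obstacle.} The hard part is checking that this termwise lift lies in $\Omega(E)$ simultaneously at every marking. The local lifts at one point must not create poles elsewhere, in particular at $\infty$. I would handle this by noting that each $(y-c_i)^{-kp+p-1}dy$ with $kp-p+1\ge 2$ is regular at $\infty$, and by treating $k=1$ (the residue terms) and the $\infty$ polar part separately, checking that $\deg\le m_\infty$ there.
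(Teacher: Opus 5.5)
\textbf{Verdict: correct route, with one unresolved step and a missing hypothesis.} Your Step 3 is the right argument, and it is more elementary than the paper's main proof. The paper dualizes via Serre duality to injectivity of $\mathcal{O}(-E')\to F_*\mathcal{O}(-pE')\hookrightarrow F_*\mathcal{O}(-E)$ on $H^1$ and checks this with \v{C}ech monomials; it mentions a direct lift only in passing.

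\textbf{The lift must be centered at each marking.} The global candidate $g^py^{p-1}dy$ you name first fails: at a marking $c_i\neq 0$ its pole order is $p\lceil m_i/p\rceil$, which exceeds $m_i$ whenever $p\nmid m_i$.

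\textbf{The gap: residue terms at $\infty$.} The point you flag as the main obstacle is not resolved, and ``checking $\deg\le m_\infty$'' does not resolve it. Write $\eta=g\,dy$ with $g=P(y)+\sum_i\sum_{k\ge1}a_{i,k}(y-c_i)^{-k}$, where the $c_i$ are the finite markings.
\begin{itemize}
\item For $k\ge 2$ the lifts are regular at $\infty$.
\item The polynomial term $y^j\,dy$ with $j\le\lceil m_\infty/p\rceil-2$ lifts to $y^{pj+p-1}dy$. Its pole order at $\infty$ is $p(j+2)-p+1\le m_\infty$.
\item Each residue lift $a_{i,1}^p(y-c_i)^{-1}dy$ has a simple pole at $\infty$. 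This is forbidden when $m_\infty=0$ (e.g.\ $\infty$ unmarked), so no termwise check can work.
\end{itemize}
The missing idea is additivity of Frobenius together with the residue theorem. The residue at $\infty$ of $\sum_i a_{i,1}^p(y-c_i)^{-1}dy$ is $-\sum_i a_{i,1}^p=-(\sum_i a_{i,1})^p=(\operatorname{res}_\infty\eta)^p$. If $m_\infty=0$, then $\eta$ is regular at $\infty$, so this residue vanishes and the lifted residue part is regular at $\infty$. If $m_\infty\ge1$, a simple pole is allowed anyway. With this added, Step 3 is a complete proof.

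\textbf{Steps 1--2 can be dropped.} They also contain a sign slip: on $\Pjk$ one has $\deg F_*L=\deg L-(p-1)$, as $F_*\mathcal{O}=\mathcal{O}\oplus\mathcal{O}(-1)^{p-1}$ shows.

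\textbf{You must assume $m_i\ge 0$ for all $i$.} Your lift only bounds poles and imposes no vanishing at points with $m_i<0$. This cannot be repaired, because the statement itself fails there. Take $p=2$ and $E=p_1+p_2+p_3+p_4-p_5-p_6-p_7-p_8$. Then $\deg\Omega_{\Pjk}(E)=-2$, so the source is $0$. But $\sum_i\lceil m_i/2\rceil p_i=p_1+\dots+p_4$, so the target has dimension $3$.

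\textbf{What your route buys over duality.} In the duality argument, the inclusion $\mathcal{O}(-pE')\hookrightarrow\mathcal{O}(-E)$ is multiplication by a section vanishing to order $p\lceil m_i/p\rceil-m_i\le p-1$ at each $p_i$. Injectivity on $H^1$ genuinely depends on these zeros being spread over the markings. If they are concentrated at one point, as with the map $T_1^{d'-pd}$ in the paper's final display, the class $T_0^{-a}T_1^{-1}$ is killed once $\deg E'\ge 2$ and $\deg(pE'-E)\ge p$. An example is $p=2$, $E=p_1+p_2+p_3$, where the true map is bijective because $\mathfrak{c}(dy/(y-c))=dy/(y-c)$. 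In your approach the numerics are just the local inequality $kp-p+1\le m_i$ for $k\le\lceil m_i/p\rceil$, plus the residue identity above. This makes the roles of both $m_i\ge 0$ and the bound $p-1$ transparent.
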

\begin{proof}
    We know from \cite[Proposition 9]{serre1958topologie} that the Cartier Operator corresponds to the absolute Frobenius map under Serre duality, so we can identify the cokernel on global sections of the Cartier operator with the kernel of the map
    \begin{align*}
        H^1((F_{\Pjk/k})_*) \!: H^1\left(\mathcal{O}_{\Pjk}\left(\sum_{i=1}^n\left \lfloor \frac{-m_i}{p}\right \rfloor[p_i]\right)\right) \to H^1\left((F_{\Pjk/k})_*(\mathcal{O}_{\Pjk})(\sum_{i=1}^n-m_i[p_i])\right).
    \end{align*}
    Using the convention $(y-\infty)=y^{-1}$ we define
    \begin{align*}
        d&=\sum_{i=1}^n \left \lfloor \frac{-m_i}{p}\right \rfloor \\
        d'&=-\sum_{i=1}^n m_i
    \end{align*}
    and
    \begin{align*}
        f&=\frac{\prod_{i=1}^n(y-p_i)^{\left \lceil \frac{m_i}{p}\right \rceil}}{y^d} \\
        g&=\frac{\prod_{i=1}^n (y-p_i)^{m_i}}{y^{d'}}
    \end{align*}
    we get a commutative diagram
     \begin{center}
    \begin{tikzcd}
       \mathcal{O}_{\Pjk}(\sum_{i=1}^n \left \lfloor \frac{-m_i}{p} \right \rfloor [p_i]) \arrow[r, "F_{\Pjk /k}"] \arrow[d, "f", "\sim"'] &  (F_{\Pjk/k})_*(\mathcal{O}_{\Pjk})(p\sum_{i=1}^n  \left \lfloor \frac{-m_i}{p} \right \rfloor [p_i])   \arrow[hookrightarrow, r] \arrow[d, "f^p", "\sim"'] & ({F_{\Pjk/k}})_*(\mathcal{O}_{\Pjk})(-\sum_{i=1}^n m_i[p_i])  \arrow[d, "g", "\sim"']\\
       \mathcal{O}_{\Pjk}(d) \arrow[r, "F_{\Pjk/k}"] & ({F_{\Pjk/k}})_*(\mathcal{O}_{\Pjk})(pd)  \arrow[hookrightarrow, r] & (F_{\Pjk /k})_*(\mathcal{O}_{\Pjk})(d')
    \end{tikzcd}
 \end{center}
 So it is enough to show that the lower row is injective on first cohomology. But here it is explicitly given by
\begin{center}
    \begin{tikzcd}
       (\frac{1}{T_0 T_1}k[T_0,T_1])_d \arrow[r, "F_{\Pjk /k}"] & (\frac{1}{T_0 T_1} (F_{\Pjk /k})_*(k)[T_0,T_1])_{pd}  \arrow[r, "T_1^{d'-pd}"]  & (\frac{1}{T_0 T_1}(F_{\Pjk} / k)_*(k)[T_0,T_1])_{d'}
       \end{tikzcd}
 \end{center}
 where $[T_0:T_1]$ are the standard homogeneous coordinates on $\Pjk$, which is clearly injective.

 Alternatively, one can also directly construct $f^p x^{p-1}dx$ as a preimage of $f dx$ under the Cartier operator and compare the order of zeroes and poles.
\end{proof}

Next, we want to develop a better understanding of bivariant differential forms: On the projective line over an algebraically closed field we can describe the twisted Cartier operator more explicitly:

\begin{lemma}
    Let $F_{\Pjk/k}: \Pjk \to \Pjk$ be the relative Frobenius and $x,y$ be rational functions on the target such that $F_{\Pjk/k}$ corresponds to the field extension $k(x) \to k(y)$ with $x \mapsto y^p$. The twisted Cartier operator can be expressed as
    \begin{align*}
        \mathfrak{tc}: (F_{\Pjk/k})_* \Omega_{F_{ \Pjk/k}}(\sum m_i p_i) &\to \mathcal{O}_{\Pjk}\left(\sum_{i=1}^n \left \lceil{\frac{m_i}{p}}\right \rceil p_i\right) \\
        \sum_{i=0}^{p-1} f_i^py^i (d_{{\Pjk}^{(p/k)}} x)^\vee \otimes (d_{\Pjk} y)&\mapsto f_{p-1}
    \end{align*}
    Whenever there is no danger of misinterpretation, we will write $\frac{dy}{dx}$ instead of $(d_{{\Pjk}^{(p/k)}} x)^\vee \otimes (d_{\Pjk} y)$.
\end{lemma}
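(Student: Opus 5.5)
The plan is to unwind Definition~\ref{def:twisted_cartier} in the coordinates $x,y$ and show that twisting by $(\Omega_{\Pjk^{(p/k)}/k})^\vee$ amounts to pairing the $\Omega_{\Pjk}$-factor with $(d_{\Pjk^{(p/k)}}x)^\vee$. First I will identify the target curve with $\Pjk$, using $\Pjk^{(p/k)}\cong\Pjk$ as explained after Definition~\ref{def: cartier_family}. Under this identification $x$ is a coordinate on the target. Since $\Omega_{\Pjk^{(p/k)}}$ is generated by $d x$ on the affine chart, a local section of $\Omega_{F_{\Pjk/k}}=(F^*\Omega_{\Pjk^{(p/k)}})^\vee\otimes\Omega_{\Pjk}$ can be written uniquely as $(dx)^\vee\otimes\omega$ with $\omega$ a meromorphic differential on the source. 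The twisted Cartier operator is then, by construction, $(dx)^\vee\otimes\omega\mapsto (dx)^\vee\otimes\mathfrak{c}(\omega)$, followed by the contraction $(\Omega_{\Pjk^{(p/k)}})^\vee\otimes\Omega_{\Pjk^{(p/k)}}\cong\mathcal{O}$.

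The next step is the computation. I write $\omega=\sum_{i=0}^{p-1}f_i^p y^i\,dy$; this is possible because $k(y)$ has basis $1,y,\dots,y^{p-1}$ over $k(y)^p=k(x)$. By the defining formula of the Cartier operator with local parameter $t=y$, $\mathfrak{c}(\omega)=f_{p-1}\,d(y^p)^{1/p}$. In the identification of the Frobenius twist with the target, the coordinate $y$ on the twist corresponds to $x$, so this is $f_{p-1}\,dx$ once $f_{p-1}$ is read as a function of $x$, via $f_{p-1}(y)^p=f_{p-1}^{(p)}(x)$. Contracting with $(dx)^\vee$ then gives $f_{p-1}$, which is the claimed formula. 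Theorem~\ref{thm: prop_cartier}(2),(3) ensure the formula is $k(x)$-semilinear and additive, so it is compatible with how $f_i^p$ acts. Coordinate independence away from the chart, including at $\infty$, follows from the fact that $\mathfrak{c}$ glues (Theorem~\ref{thm: prop_cartier}). It therefore suffices to verify the formula on the dense open chart, because both sides are morphisms of sheaves of meromorphic sections.

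Finally, I will check that the image lands in $\mathcal{O}_{\Pjk}(\sum\lceil m_i/p\rceil p_i)$. Twisting the surjection of Remark~\ref{rem: cartier_linear} by $(\Omega_{\Pjk^{(p/k)}})^\vee$ shifts both pole bounds by the same divisor of $dx$, which carries the bound over. The main obstacle is bookkeeping: keeping track of the identification $\Pjk^{(p/k)}\cong\Pjk$ so that "$f_{p-1}$" means the correct function on the target, with $p$-th roots of coefficients, and confirming that the twist of the divisor $\sum m_ip_i$ matches what Definition~\ref{def:twisted_cartier} intends near the points where $dx$ or $dy$ has poles, such as $\infty$. I would handle $\infty$ by redoing the computation in the coordinates $x^{-1}, y^{-1}$ and using gluing.
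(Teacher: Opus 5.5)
Your proposal is correct and follows the same route as the paper, which gives no separate argument for this lemma: it is Definition~\ref{def:twisted_cartier} unwound in the coordinates $x,y$, i.e.\ write a section as $(dx)^\vee\otimes\omega$, apply $\mathfrak{c}$ with the global parameter $t=y$ using $k(y)=\bigoplus_{i=0}^{p-1}k(x)\,y^i$, and contract with $(dx)^\vee$. The only thing to tidy is notation: your $d(y^p)^{1/p}$ should just read $dy$, which corresponds to $dx$ on ${\Pjk}^{(p/k)}$ under the identification you describe.
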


The calculations we have done hold étale locally for curves of higher genus as well. Therefore, one sees that the properties of the Cartier operator from Theorem~\ref{thm: prop_cartier} and Lemma~\ref{lem: car_sur} extend to the twisted Cartier operator:
\begin{theorem} \label{thm: rel_car_prop}
Let $\omega_1, \omega_2 \in \Omega_{F_{C / S}}$ be meromorphic differential forms and $f \in \mathcal{O}_{C}$. 
    \begin{enumerate}
        \item $\rc$ is well defined.
        \item $\rc(\omega_1+\omega_2)=\rc(\omega_1)+\rc(\omega_2)$.
        \item $\rc(f^p \omega_1)=f\rc(\omega_1)$.
        \item $\omega_1$ is exact iff $\rc(\omega_1)=0$.
        \item $\omega_1$ is quasi-exact iff $\rc(\omega_1)$ is a unit coming from $S$.
        \item $\rc$ maps $\Omega_{F_{C /S}}(\sum_{i=1}^n m_i p_i)$ surjectivly to $\mathcal{O}_{C^{(p/S)}}(\sum_{i=1}^n \left \lceil{\frac{m_i}{p}}\right \rceil p'_i)$.
        \item For the projective line over $k$ the twisted Cartier operator on global sections
        \begin{align*}
        H^0(\mathfrak{tc}): H^0(((F_{\Pjk /k})_*\Omega_{F, \Pjk})(\sum_{i=1}^n m_i p_i)) \to H^0(\mathcal{O}_{\Pjk}\left(\sum_{i=1}^n \left \lceil{\frac{m_i}{p}}\right \rceil p_i\right))
    \end{align*}
    is surjective.
    \end{enumerate}
\end{theorem}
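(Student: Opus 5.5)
The plan is to reduce every item to the corresponding statement for the untwisted Cartier operator, Theorem~\ref{thm: prop_cartier} and Lemma~\ref{lem: car_sur}. The key observation is that the twist is given étale locally by tensoring with the line bundle $(\Omega_{C^{(p/S)}/S})^\vee$, which is trivialized by $(d x)^\vee$ once we choose a local coordinate $y$ on $C$ with $x=y^p$ the induced coordinate on $C^{(p/S)}$. In such a chart every section of $\Omega_{F_{C/S}}$ has the form $(dx)^\vee\otimes\phi$ with $\phi$ a meromorphic form on $C$, and $\rc((dx)^\vee\otimes\phi)=(dx)^\vee\otimes\mathfrak{c}(\phi)$, read as a function via the trivialization.

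For (1), we take a second coordinate $x'$ and write $(dx')^\vee=\frac{\partial x}{\partial x'}(dx)^\vee$. Since $\frac{\partial x}{\partial x'}$ is a function on $C^{(p/S)}$, it pulls back to a $p$-th power on $C$. Theorem~\ref{thm: prop_cartier}(3) then shows that the two local expressions agree, so the local definitions glue; this is the only point needing care. Items (2) and (3) follow immediately from the corresponding items of Theorem~\ref{thm: prop_cartier}, again using $p$-linearity.

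For (4), recall that by Definition~\ref{def:exact_quexact}, exactness means $\phi=d_C h$ in a chart with $x\mapsto y^p+g$. We may absorb $g$ into the coordinate change, and independence of the chart follows from (1). Then Theorem~\ref{thm: prop_cartier}(1) gives: $\phi$ is exact iff $\mathfrak{c}(\phi)=0$, i.e. iff $\rc=0$.

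For (5), if $\phi=u(y^{p-1}dy+dh)$, then $\mathfrak{c}(\phi)=u\,\mathfrak{c}(y^{p-1}dy)=u\,dx$ by the defining formula of $\mathfrak{c}$, so $\rc=u$ is a unit from $S$. Conversely, if $\rc(\omega)=u$ with $u$ a unit from $S$, then $\omega-u\,(dx)^\vee\otimes y^{p-1}dy$ has vanishing $\rc$. By (4) it is then of the form $(dx)^\vee\otimes dh$, and after rescaling $h$ by $u^{-1}$ we get $\phi=u(y^{p-1}dy+dh)$.

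Item (6) is the surjectivity statement of Theorem~\ref{thm: prop_cartier}(4). The twist is an isomorphism of line bundles, so it preserves the pole orders at the $p_i'$ as long as the trivializing $dx$ has neither zeros nor poles there. Near a marking we choose $y$ to be a uniformizer at $p_i$, so that $x$ is a uniformizer at $p_i'$, and the ceiling bound transfers unchanged.

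Item (7) should follow from Lemma~\ref{lem: car_sur} after twisting by $\mathcal{O}(2)$, since $\Omega_{\Pjk}^\vee\cong\mathcal{O}(2)$. A global section twist does not commute with $H^0$ naively, so the cleanest route is to trivialize $(d x)^\vee$ on $\mathbb{A}^1$, which has a double zero at $\infty$, and absorb this into the divisor. In other words, we apply Lemma~\ref{lem: car_sur} with the multiplicity at $\infty$ shifted by $2p$: $\lceil (m_\infty+2p)/p\rceil=\lceil m_\infty/p\rceil+2$. This matches $\rc$ exactly, since $(dx)^\vee$ contributes $p$-th power factors. Alternatively one can reuse the explicit preimage $f^p y^{p-1}\frac{dy}{dx}$ of $f$, checking pole orders as in the final remark of Lemma~\ref{lem: car_sur}.

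I expect this pole-order bookkeeping at $\infty$ (and generally at markings) to be the main obstacle.
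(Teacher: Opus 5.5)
Your proposal is correct and follows the same route as the paper, which only remarks that the local formula for $\rc$ holds \'etale locally, so the properties of $\mathfrak{c}$ from Theorem~\ref{thm: prop_cartier} and Lemma~\ref{lem: car_sur} carry over; your chart computations and the divisor shift by $2p\,\infty$ on the source (resp.\ $2\,\infty$ on the target) in (7) supply exactly the details the paper leaves implicit. One point to keep explicit is that in (5) you need charts where $x$ is exactly the Frobenius twist of $y$ (i.e.\ $g=0$): unlike in (4), a nonzero $g$ is not harmless there, because then $\mathfrak{c}(y^{p-1}dy)$ is in general a non-constant multiple of $dx$.
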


\section{The Moduli Spaces \texorpdfstring{ $\LHURphgnZ$}{LHZA} and \texorpdfstring{ $\LHURphgnF$}{LHFAXi}} \label{sec:mod_space}
In this section, we want to define the Moduli spaces of liftable logarithmic Hurwitz covers both in mixed and in equicharacteristic. This is motivated by the main result of \cite{brezner2017lifting} where certain covers of degree $p$ over an algebraically closed field $k$ of characteristic $p$ were identified to be exactly those that can be lifted to a cover of smooth curves. We give $\Z_{(p)}$ the standard logarithmic structure on a discrete valuation ring which means we consider the logarithmic structure that maps the generator of the monoid $\N$ to the unique maximal ideal $(p)$ by mapping $1$ to $p$ and consider $\mathbb{F}_p$ to carry the trivial logarithmic structure.
\subsection{Definition and Representability}
Using Gillam's criteria from \cite{gillam2012logarithmic}, we will at first construct an auxiliary Moduli stack as a category fibered over the category of schemes and then describe minimal objects that allow us to consider the stack as a category fibered over the category of  logarithmic schemes. 

As in the case of Hurwitz covers in characteristic $0$, we will fix integers $h,g,b,N$, an integer vector $\Lambda=(\lambda_1, \dots \lambda_b)$ determining the multiplicity of the map at each marking. We denote $A=(h,g,b,N,\Lambda)$ to store this data. In the equicharacteristic case, we consider an additional integer vector $\Xi=(\xi_1, \dots, \xi_b)$ measuring how much the ramification indices of the cover differ from the multiplicities. The $\xi_i$ can be thought as the logarithmic order of zeroes and poles of sections of the relative dualizing sheaf.

Non-trivial choices for $\Xi$ can lead to interesting Moduli spaces supported only over the special fiber, but as in characteristic $0$ the ramification indices are determined by the multiplicities, whenever one wants to work in mixed characteristic, one has to assume that all $\xi_i$ are $0$. It is also worth noting that as in the case of characteristic $0$, for the Moduli space we are about to construct to be non-empty, a Riemann-Hurwitz formula has to be fulfilled:
\begin{align*}
    2h-2=p(2g-2)+\sum_{i=1}^b(\lambda_i + \xi_i-1).
\end{align*}
At first, we have to construct some auxiliary Moduli spaces.

Let $B$ be either $\Z_{(p)}$ or $\mathbb{F}_p$ with log structure defined as above. The first case will be called the mixed characteristic case, while we will call the second one the equicharacteristic case. 
\begin{definition}
    The Moduli functor $\mathcal{L}\Maphgnb$ from the category of logarithmic schemes over $B$ to the category of sets has as $(S,M_S)$-points morphisms $f_S: (C, M_C) \to (D,M_D)$ such that:
    \begin{enumerate}
        \item $(C,M_C)$ is a $b$-marked log curve of genus $h$ over $(S,M_S)$
        \item $(D,M_D)$ is a $N$-marked log curve of of genus $g$ over $(S,M_S)$.
        \item $f: (C,M_C) \to (D,M_D)$ is a morphism of logarithmic schemes over $(S,M_S)$ that maps markings to markings and preserves their order.
    \end{enumerate}
    Morphisms are mapped to pullback diagrams.   
\end{definition}
Similar to \cite{gillam2012logarithmic}, we want to relate Moduli functors on the category of logarithmic schemes to Moduli functors on the category of schemes, for this we will need the following definition:
\begin{definition}
    Let $\mathcal{M}$ be a category fibered in groupoids over the category of logarithmic schemes. By $\mathbf{LOG}(\mathcal{M})$ we denote the category fibered in groupoids over the category of schemes which has as $S$-points pairs $(M_S, X)$, where $M_S$ is a logarithmic structure on $S$ and $X$ is an object of $\mathcal{M}$ over $(S,M_S)$.
\end{definition}
\begin{lemma}
    $\mathbf{LOG}(\mathcal{L}\Maphgnb)$ is representable by an algebraic stack.
    \end{lemma}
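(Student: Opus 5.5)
The plan is to factor the problem through known algebraic stacks: first the stack of pairs of log curves, then the morphisms between them. Gillam's framework \cite{gillam2012logarithmic} will do the main work. For a log structure $M_S$ on a scheme $S$, an object of $\mathbf{LOG}(\mathcal{L}\Maphgnb)(S)$ is $M_S$ together with log curves $C,D$ over $(S,M_S)$ and a log morphism $f\colon C\to D$ over $S$.

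First I would treat the two curves. The stack $\mathbf{LOG}(\mathcal{M}_{h,b}^{log})$ of log curves over arbitrary log bases is algebraic. By the theorem from \cite{kato2000log} recalled above, $\mathcal{M}^{log}_{h,b}$ is represented by $\overline{\mathcal{M}}_{h,b}$ with its boundary log structure, so a log curve over $(S,M_S)$ is the same as a map $S\to\overline{\mathcal{M}}_{h,b}$ together with a morphism of log structures from the pulled-back minimal one to $M_S$. Hence $\mathbf{LOG}(\mathcal{M}^{log}_{h,b})\cong \mathbf{LOG}(\overline{\mathcal{M}}_{h,b})$, which is Olsson's stack $\mathcal{L}og_{\overline{\mathcal{M}}_{h,b}}$ and is algebraic. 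The same holds for $(g,N)$.

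The fiber product
\[
\mathbf{LOG}(\mathcal{M}^{log}_{h,b})\times_{\mathcal{L}og_B}\mathbf{LOG}(\mathcal{M}^{log}_{g,N})
\]
over Olsson's stack $\mathcal{L}og_B$ parametrizes a single log structure $M_S$ over $B$ carrying both curves. It is algebraic because fiber products of algebraic stacks are algebraic.

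Next I would add the map. Over this fiber product $\mathcal{X}$ there is a universal pair $C\to S\leftarrow D$, and we need to represent log morphisms $C\to D$ over $S$ that send markings to markings in order. Forgetting log structures first, $\underline{\mathrm{Hom}}_S(C,D)$ is representable by an algebraic space locally of finite presentation over $\mathcal{X}$, since $C$ and $D$ are proper, flat and finitely presented (Artin, or the Hom-scheme results in the Stacks project). The condition that markings go to markings in order is closed, given by a finite disjoint union of fiber conditions $f\circ\sigma_i=\tau_{j(i)}$ over order-preserving assignments $i\mapsto j(i)$. It remains to lift the underlying morphism $\underline f$ to a log morphism, that is, to give $\underline f^*M_D\to M_C$ compatible with the structure maps and with the maps from $M_S$. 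By Olsson (\emph{Logarithmic geometry and algebraic stacks}, the representability of $\mathcal{H}om$ of fine log structures), the functor of such morphisms of fine log structures is representable by an algebraic space locally of finite presentation over the base when the source is fine. Applying this on $C$, with compatibility enforced by equalizers that are again representable, gives an algebraic space over $\underline{\mathrm{Hom}}$.

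The main obstacle is this last step. One has to check that Olsson's result applies relatively over the curve $C$ and then descends to $S$: the functor must be the pushforward along the proper $C\to S$ of a sheaf of log-structure morphisms. I would handle it as follows. Work étale locally with charts at nodes and markings, where $\overline{M}_C$ is explicitly described as in the definition of log curves. A log lift of $\underline f$ is then determined by finitely many elements, the images of $e_1,e_2$ and of marking generators, in $\overline{M}_C$ together with unit data. Each is a locally closed or étale condition on the base, which gives local finite presentation and representability. As an alternative, it should suffice to note that $\mathbf{LOG}(\mathcal{L}\Maphgnb)$ is a limit-preserving stack with representable diagonal, and to verify Artin's criteria using the deformation theory of log morphisms.
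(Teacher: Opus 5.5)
Your reduction to Olsson's stacks, the underlying $\underline{\mathrm{Hom}}_S(C,D)$ and the closed marking condition is sound, apart from one correction noted at the end. But the step you flag as the main obstacle is exactly the nontrivial content of the lemma, and the proposal does not prove it. The paper does not build this by hand. It applies Wise's Corollary 1.1.2 in \cite{wise2016moduli} to the universal prestable curve $\mathcal{C}^{ps-log}_{g,N}\to\mathcal{M}^{ps-log}_{g,N}$, which directly gives an algebraic stack of log maps from log curves into its fibers, and then cuts out the marking condition as a closed substack.

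In your route, grant Olsson's representability of $\mathcal{H}om_{M_S}(\underline f^*M_D,M_C)$ by some $H\to C$. The functor of log lifts is then the Weil restriction $\pi_*H$ along the proper curve $\pi\colon C\to S$. The standard representability of morphism spaces out of a proper flat finitely presented source needs a separated, finitely presented target, and such $\mathcal{H}om$ spaces are in general neither. For instance, on $\mathrm{Spec}\,k[t]$ take $N$ generated by $e\mapsto t$ and $M$ generated by $e_1,e_2\mapsto t$. The morphisms $e\mapsto e_1$ and $e\mapsto e_2$ agree over $t\neq 0$ but differ over the local ring at $t=0$, so the valuative criterion of separatedness fails. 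Already $\mathcal{H}om(N,N)$ has fiber $\bigsqcup_{n\geq 1}\mathbb{G}_m$ over the origin, so it is not of finite type. You would therefore need a Hall--Rydh type algebraicity theorem for Hom-stacks into non-separated targets, together with a check of its hypotheses for $H$; none of this is supplied.

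Your local fallback does not close the gap. $\mathcal{L}\Maphgnb$ imposes no finiteness, so components of $C$ can be contracted onto nodes or markings of $D$. A log lift then involves sections of $\overline{M}_C$ and units over whole proper components, with compatibilities that are global on the dual graph. Such a lift is not determined by finitely many chart data at nodes and markings. Your claim that each datum is a ``locally closed or \'etale condition on the base'' is asserted rather than proved, and the Artin-criteria alternative is only named. Citing Wise's theorem at this point repairs the argument and makes it essentially the paper's.

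Separately, $D$ must be allowed to be prestable, since the later stacks use semistable targets. So replace $\overline{\mathcal{M}}_{g,N}$ by the prestable stack with F.~Kato's log structure; Olsson's $\mathcal{L}og$ construction still applies there.
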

\begin{proof}
    Note that ${\mathcal{M}^{ps-log}_{g,N}}$, the Moduli space of $N$-marked prestable log curves of genus $g$, is representable by a logarithmic algebraic stack with universal family ${\mathcal{C}^{ps-log}_{g,N}}$. According to \citep[Corollary 1.1.2]{wise2016moduli} there exists an Artin stack $\mathcal{M}^{log}_{h,b}(\mathcal{C}^{ps-log}_{g,N}/{\mathcal{M}^{ps-log}_{g,N}})$ with a logarithmic structure that parametrizes logarithmic maps from stable $N$-marked log curves of genus $h$ into $N$-marked log curves of genus $g$. 
    
    The stack $\mathbf{LOG}(\mathcal{L}\Maphgnb)$ now is cut out from the stack $\mathbf{LOG}(\mathcal{M}^{log}_{h,N}(\mathcal{C}^{ps-log}_{g,N}/\mathcal{M}^{ps-log}_{g,N}))$ as a closed substack by the condition on how the markings map.
\end{proof}
%The underlying algebraic stack is described in \citep[Theorem 3.12]{rydh2011representability} to be locally of finite presentation.
\begin{definition}
    The subfunctor $\mathcal{LOG}\Mapfshgn \subseteq \mathbf{LOG}(\mathcal{L}\Maphgnb)$ has as $S$-points those maps $f_S: (C,M_C) \to (D,M_D)$ where the underlying morphism of schemes is finite of degree $p$, surjective, the preimage of the nodes of $D$ are the nodes of $C$, the multiplicity of both sides of a node agree, map the markings $\lambda_i$ to $1$ to their image and are at each point either log-étale or wild, i.e. at points of characteristic $p$ of degree $p$.
\end{definition}
\begin{lemma} \label{lem: logmap}
    $\mathcal{LOG}\Mapfshgn$ is representable by an algebraic stack.
\end{lemma}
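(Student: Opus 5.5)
The plan is to exhibit $\mathcal{LOG}\Mapfshgn$ as a locally closed (in fact, open inside a closed) substack of the algebraic stack $\mathbf{LOG}(\mathcal{L}\Maphgnb)$ from the previous lemma. Representability then follows, since locally closed substacks of algebraic stacks are algebraic. I would go through the defining conditions one at a time and show that each is an open or closed condition on the base $S$, stable under base change.

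First I would treat the conditions on the underlying map of schemes. Quasi-finiteness of $f_S$ is open on $S$: the locus of $D$ where fibers of $f$ have positive dimension is closed, and its image in $S$ is closed because $D \to S$ is proper. A quasi-finite proper map is finite. Once $f$ is finite, and since $C,D$ are flat over $S$ and fiberwise of dimension one, $f_*\mathcal{O}_C$ is locally free wherever the fibers are reduced curves, by fiberwise flatness. Its rank is locally constant, so asking that the degree be $p$ selects a union of connected components. Surjectivity is automatic for a finite map of connected curves of positive degree. The condition that each marking $p_i$ maps with multiplicity $\lambda_i$ can be read off the logarithmic structure: the map $\overline{M}_{D,f(p_i)} \to \overline{M}_{C,p_i}$ sends the generator to $\lambda_i$ times the generator. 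This is locally constant on the strata, and for markings it is an open-and-closed condition.

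Next come the nodal conditions. The requirement that preimages of nodes be exactly the nodes of $C$, with equal multiplicities on both branches, is encoded in the characteristic monoids. By the nuclear description, over a strict étale cover of $S$ the morphism $\overline{M}_{D} \to \overline{M}_C$ at a node sends $e_1,e_2$ to $n e_1', n e_2'$. The condition that both multiplicities agree is the compatibility $n\delta_{c} = \delta_{f(c)}$ in $\overline{M}_S$. Along generizations this compatibility is preserved, because of the contraction behaviour stated in the theorem on nuclear curves. It is also closed, because it is an equation on the combinatorial type. Smooth points of $C$ mapping to nodes of $D$ are excluded by an open condition, namely the complement of the image of the singular locus.

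The last condition is that at each point $f$ is either log étale or wild, i.e. of degree $p$ at a point of characteristic $p$. I expect this to be the main obstacle, because it mixes an open condition (log étaleness) with a closed one (characteristic $p$ and total ramification). My first attempt is to consider the cokernel of $f^*\Omega^{log}_D \to \Omega^{log}_C$, a coherent sheaf whose support $Z$ is closed in $C$. The requirement becomes that $Z$ lies in the fiber over $p=0$, and that at points of $Z$ the local degree equals $p$. The local degree is upper semicontinuous on finite flat maps, and $p$ is its maximal value, so the locus where the local degree is $p$ is closed. The locus of points in $Z$ with local degree less than $p$ is therefore open in $Z$ and hence locally closed in $C$. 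Its image in $S$ is constructible, and I would argue it is closed using properness over $S$, after stratifying by the combinatorial type. The good locus in $S$ is then open within the closed substack cut out above. If that argument fails, the fallback is to work étale locally on the nuclear charts using the local form $x \mapsto y^p + g$, which describes the degree-$p$ points explicitly.
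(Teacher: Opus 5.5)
Your route differs from the paper's. The paper does not examine the defining conditions one at a time. It uses simultaneous semistable reduction for finite covers (Liu) as a valuative, specialization-type argument to conclude that $\mathcal{LOG}\Mapfshgn$ is closed in $\mathbf{LOG}(\mathcal{L}\Maphgnb)$.

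Your condition-by-condition approach breaks down at exactly the step you flag, and the direction you aim for there is wrong: the locus of the base over which $f$ is log étale or wild at every point is not open. Here is a counterexample.
Take $p=3$ and let $S$ be a neighbourhood of $t=0$ in $\operatorname{Spec}\mathbb{F}_3[t]$, after an étale base change. Let $C=D=\Pj_S$ and let $f$ be given by $x=y^2(y-t)/(y-1)$. On $C$, mark $y=\infty$, $y=1$ and the three étale preimages of a general value $x=c$; on $D$, mark $x=\infty$ and $x=c$.
For $t\neq 0$, the unmarked point $y=0$ and the two roots of $y^2+ty+t$ are tamely ramified of index $2$. So $f_t$ is neither log étale nor wild there. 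At $t=0$ these three points coalesce into the single wild point $y=0$ of the separable map $x=y^3/(y-1)$, which has index $3$, and $f_0$ is good.
So the image of your bad set, which is merely open in $Z$, is $\{t\neq 0\}$, and properness of $C\to S$ cannot make it closed.

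What does hold is stability under specialization. $Z$ is the zero locus of the trace form $\tau_f$, a section of the line bundle $\omega^{log}_f$. By Krull's principal ideal theorem, an isolated point of $Z_{s_0}$ is then, over a DVR, a specialization of a point of $Z$ in the generic fiber, and local degree can only increase under specialization. The non-isolated part of $Z_{s_0}$ consists of inseparable components of degree $p$.
So you need a closedness argument for this condition; this is what the paper extracts from semistable reduction. You must then also specify a closed-substack structure on that locus, because a condition that is closed but not open, imposed pointwise on $S$, does not define an algebraic subfunctor.

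There is a second, independent error: $f$ is not flat at nodes of multiplicity $n\geq 2$. Locally, $k[[u,v]]/(uv)$ over $k[[x,y]]/(xy)$ with $x\mapsto u^n$, $y\mapsto v^n$ has generic rank $n$ on each branch but closed fiber of length $2n-1$. The fiber maps $f_s$ are themselves non-flat there, so the fiberwise flatness criterion does not apply and $f_*\mathcal{O}_C$ is not locally free.

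Local constancy of the degree can be rescued by working over the smooth locus of $D/S$. Its preimage is the smooth locus of $C/S$, where miracle flatness and the fiberwise criterion do apply. But your claim that $p$ is the maximal local degree is false at nodes, since a wild node has fiber length $2p-1$. Nodes must be handled through the multiplicities recorded by the log structure, not by fiber length.

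Finally, the nodal conditions need no closedness argument:
\begin{itemize}
\item Equal multiplicities on the two branches are automatic for log morphisms, as the paper remarks citing Brezner--Temkin.
\item A quasi-finite log map cannot send a smooth point to a node. Both branch coordinates would pull back to unit multiples of functions from the base, and so would vanish on a neighbourhood in the fiber.
\item The remaining requirement, that no node of $C$ maps to a smooth point of $D$, is an open condition.
\end{itemize}
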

\begin{proof}
    The simultaneous semistable reduction theorem for finite covers of curves that can for example be found in  \citep[Corollary 3.10.]{liu2006stable}, states that after a finite extension of the base, an object $f_\eta$ of $\mathcal{LOG}\Mapfshgn$ over the generic fiber of a discrete valuation ring $R$ can be uniquely extended to an object of $\mathcal{LOG}\Mapfshgn$ over $R$. From this, we can conclude that $\mathcal{LOG}\Mapfshgn$ is a closed substack of $\mathbf{LOG}(\mathcal{L}\Maphgnb)$.
\end{proof}

With these notions, we can finally construct the desired Moduli space which will be the main object of this paper.

\begin{definition}
    With $A$ and $\Xi$ as above, the functors of preliftable Hurwitz covers in mixed characteristic $\PLHURphgnZ$ or equicharacteristic $\PLHURphgnF$ are the logarithmic Moduli functors over $\mathbb{Z}_{(p)}$ with standard logarithmic structure and $\mathbb{F}_p$ with trivial logarithmic structure respectively, that have $S$-points:
    \begin{align*}
        (f_S: C_S \to D_S, \ell_S)
    \end{align*}
    \begin{enumerate}
        \item $C_S$ is a stable log curve over $S$ of genus $h$ with $b$ markings.
        \item $D_S$ is a semistable log curve over $S$ of genus $g$ with $N$ markings.
      
        \item $f_S$ is a finite surjective map of degree $p$ that maps markings to markings preserving the order with multiplicity given by $\Lambda$ and is at each point either logarithmic étale or wild, so it is of degree $p$ at points of characteristic $p$.

        \item $(C_S,\ell_S) \in \RubhbX(S)$ such that for the relative dualizing sheaf
        \begin{align*}
            \omega^{log}_{f_S} = (f_S^* \Omega^{log}_{D_S/S})^\vee \otimes_{\mathcal{O}_{C_S}} \Omega^{log}_{C_S/S}
        \end{align*}
        it holds $\omega^{log}_{f_S} (-\ell_S) \cong \mathcal{O}_{C_S}$. Note that in the mixed characteristic case, we assume $\Xi=(0, \dots, 0)$.
    \end{enumerate}
    Morphisms map to pullback diagrams.
\end{definition}
\begin{remark}
    Because of the logarithmic structure, the condition of the nodes mapping to nodes with the same multiplicity on both branches already follows from the given definition. This is described in \citep[Lemma 5.2.2.]{brezner2017lifting}.
\end{remark}

At first, we want to see that these Moduli functor have proper logarithmic Moduli stacks.

\begin{theorem} \label{thm:plhur_rep}
    The Moduli functors $\PLHURphgnZ$ and $\PLHURphgnF$ are representable by a logarithmic stack with their underlying stacks being proper Deligne-Mumford.
\end{theorem}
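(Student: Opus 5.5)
The plan is to build $\PLHURphgnZ$ and $\PLHURphgnF$ as fibre products of stacks already known to exist, using Gillam's minimality formalism from \cite{gillam2012logarithmic}. The natural candidate is
\begin{align*}
\mathcal{LOG}\Mapfshgn \times_{\mathbf{LOG}(\mathcal{M}^{log}_{h,b})} \mathbf{LOG}(\RubhbX),
\end{align*}
together with a closed condition cut out by the isomorphism $\omega^{log}_{f_S}(-\ell_S) \cong \mathcal{O}_{C_S}$. Lemma~\ref{lem: logmap} shows that $\mathcal{LOG}\Mapfshgn$ is algebraic. Marcus and Wise show that $\RubhbX$ is a proper Deligne--Mumford stack, log étale over $\mathcal{M}_{h,b}^{log}$. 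So the fibre product is an algebraic stack over schemes.

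The condition on $\omega^{log}_{f_S}$ says that a line bundle, defined up to pullbacks from the base, is trivial. I would phrase it by requiring that the section $\omega^{log}_{f_S}(-\ell_S)$ of the relative Picard stack coincide with the zero section. By the properness and separatedness of the Picard functor of nodal curves, this locus is closed, or at worst locally closed. Requiring an actual isomorphism rather than equality in $\mathrm{Pic}$ adds a $\mathbb{G}_m$-torsor of choices, which is absorbed by the fact that $\ell_S$ is defined only up to $\overline{M}^{gp}_S$. This yields an algebraic stack $\mathbf{LOG}(\mathcal{PLH})$.

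Next I would check Gillam's criteria: every object must admit a minimal log structure, and minimality must be stable under pullback. For this I would combine the known minimal structures. Mochizuki's $M^C_S \oplus_{\mathcal{O}^\times_S} M^D_S/\sim$ handles the map, and the Chen et al.\ description (free on the negative levels and the slope-$0$ edges) handles $\RubhbX$. The proposed minimal structure is their pushout over the common $M^C_S$, saturated, and then tensored with $\N \to \Z_{(p)}$ in the mixed case. Since each piece satisfies Gillam's conditions and pushouts of minimal objects are minimal, $\PLHURphgnZ$ and $\PLHURphgnF$ are represented by the open substack of minimal objects, with its logarithmic structure.

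For the Deligne--Mumford property, I would show that automorphisms are finite and unramified. Here stability of $C_S$ is the key input. An automorphism of $(f_S,\ell_S)$ is determined by its action on $C_S$ together with a compatible action on $D_S$. Finite-ness of $f_S$ forces the induced automorphism of $D_S$ to be determined on each component covered by a stable component. The semistable components of $D_S$ need care: I would check that their automorphisms fixing the images of nodes and markings still lift to the finite automorphism group of $C_S$.

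Properness I would check with the valuative criterion. Over a DVR $R$ with generic object $(f_\eta,\ell_\eta)$:
\begin{enumerate}
\item Liu's simultaneous semistable reduction \citep[Corollary 3.10.]{liu2006stable} extends $f_\eta$ after a finite base change, uniquely, as used in Lemma~\ref{lem: logmap}.
\item Properness of $\RubhbX$ extends $\ell_\eta$.
\item The triviality condition, being closed, persists to the limit.
\end{enumerate}
Separatedness follows from the uniqueness in both extensions. Finite type comes from boundedness of the curves for fixed $h,g,b,N$.

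I expect the main obstacle to be the minimal-object analysis near inseparable components in characteristic $p$. There the map is not log étale, so Mochizuki's identification $e_q = n_r e_r$ must be replaced by the relation coming from a wild map, of multiplicity $p$ on nodes. One also has to verify that the resulting monoid is still fine and saturated and that it is compatible with the rescaling relations of Lemma~\ref{lem: rescaling_ensamble}. I would try first to establish this étale locally on a nuclear base, using the explicit local form $x \mapsto y^p + g$.
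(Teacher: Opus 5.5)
Your outline follows the paper's proof in all essentials. The paper also exhibits $\mathbf{LOG}(\PLHURphgnZ)$ and $\mathbf{LOG}(\PLHURphgnF)$ as closed substacks of $\mathcal{LOG}\Mapfshgn\times_{\mathbf{LOG}(\mathcal{M}^{log}_{h,b})}\mathbf{LOG}(\RubhbX)$. It builds the minimal structure from the minimal structures of $D$ and of $(C,\ell_S)$, glued along the node relations $n_e\delta_e=\delta_{e'}$. It gets properness from simultaneous semistable reduction plus properness of $\RubhbX$, and the Deligne--Mumford property from the fact that automorphisms inject into those of the $\RubhbX$-object. Your valuative-criterion and finite-unramified phrasings are cosmetic variants of these steps.

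Two of your worries do not arise. First, the obstacle you expect at inseparable components is not there. The relation $\delta_{e'}=n_e\delta_e$ is forced by $f$ being a morphism of log curves, not by log étaleness: pulling $e'_1+e'_2=\delta_{e'}$ back to the node monoid of $C$ forces equal multiplicity on both branches, with $n_e=p$ at wild nodes (this is the remark following the definition of the preliftable functors). So the paper imposes it uniformly and never needs the local form $x\mapsto y^p+g$, which only describes smooth points anyway. Second, nothing about semistable components of $D_S$ has to lift. Since $f_S$ is finite surjective, $\sigma_D$ is determined by $\sigma_C$, so $\operatorname{Aut}(f_S,\ell_S)$ injects into $\operatorname{Aut}(C_S,\ell_S)$ whatever $D_S$ looks like.

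The step whose justification is actually wrong is the closedness of the condition $\omega^{log}_{f_S}(-\ell_S)\cong\mathcal{O}_{C_S}$. The relative Picard functor of nodal curves with reducible fibres is neither proper nor separated. On a regular model, $\mathcal{O}(C_v)$ for a component $C_v$ of a reducible special fibre is trivial on the generic fibre but not on the special one. So the triviality locus of a line bundle need not be closed, and your step (3) does not follow from what you wrote; your own hedge ``or at worst locally closed'' would break it. The paper only asserts the closed-substack property ``by construction''.

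What actually makes limits exist and be unique is the freedom in $\ell$, as for the double ramification locus. Over a DVR, after base change, let $\ell_R$ be the PL function recording the orders of vanishing of the trace form $\tau_f$ along the components of the special fibre. Then $\tau_f$ trivializes $\omega^{log}_f(-\ell_R)$, which gives existence. For uniqueness, suppose another $\ell'_R$ also makes $\omega^{log}_f(-\ell'_R)$ trivial. Then $\ell'_R-\ell_R$ has zero slopes at the legs, and its line bundle has multidegree zero on the connected special fibre. Hence it is harmonic and therefore constant.
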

\begin{proof}

    By construction, $\mathbf{LOG}(\PLHURphgnZ)$ and $\mathbf{LOG} (\PLHURphgnF)$ are closed substacks of the product stack
    \begin{align*}
        \mathcal{LOG}\Mapfshgn \times_{\mathbf{LOG}({\mathcal{M}}^{log}_{h,b})} \mathbf{LOG} (\RubhbX)
    \end{align*}
    over their respective base.
    
      In particular, $\mathbf{LOG}(\PLHURphgnZ)$ and $\mathbf{LOG} (\PLHURphgnF)$ are representable by algebraic stacks and according to Gillam's criteria from \cite{gillam2012logarithmic} and \cite{wise2016moduli} the representability statement follows once we have shown the existence of coherent minimal logarithmic structures for objects and shown that the pullback of minimal objects is still minimal. The following construction of minimal logarithmic structures for objects of $\PLHURphgnZ$ and $\PLHURphgnF$ will follow similar arguments as provided in the construction of minimal logarithmic structures for the stack $\Div$ in \citep[Theorem 4.2.4.]{marcus2020logarithmic}.

    Let $(f:C \to D, \ell_S), (f':C' \to D', \ell'_{S'})$ and $(f'':C'' \to D'', \ell''_{S''})$ be objects of $\PLHURphgnZ$ or $\PLHURphgnF$ over $S, S'$ and $S''$ respectively such that the underlying schemes of $S, S'$ and $S''$ agree. The object $(f:C' \to D', \ell'_{S'})$ is called minimal if in any diagram of morphisms
 \begin{center}
    \begin{tikzcd}
       (f:C \to D, \ell_S) \arrow[r] \arrow[rd] & (f'':C'' \to D'', \ell''_{S''}) \arrow[d, dotted] \\
       & (f':C' \to D', \ell'_{S'})
    \end{tikzcd}
    \end{center}
    such that $S \to S'$ and $S \to S''$ is the identity on the underlying schemes there exists a dotted arrow that is also the identity on the underlying schemes. In other words, when $(f:C \to D, \ell_S)$ is pulled back both from $(f':C' \to D', \ell'_{S'})$ and $(f'':C'' \to D'', \ell''_{S''})$, then also $(f'':C'' \to D'', \ell''_{S''})$ is pulled back from $(f':C' \to D', \ell'_{S'})$ such that $(f:C \to D, \ell_S)$ is pulled back by the composition.

    As we have already discussed minimal logarithmic structures for the Hurwitz space in characteristic $0$ in Corollary~\ref{cor:min_log_stru_cov}, it is enough to study minimal logarithmic structures for nuclear neighborhoods of points $s$ of positive characteristic. Let $S$ be such a neighborhood.
    
    At first, note that there are source maps
    \begin{align*}
        \phi_{\mathbb{Z}_{(p)}}: \PLHURphgnZ \to \mathcal{RUB}^0_{h,b} \\
        \phi_{\mathbb{F}_p}: \PLHURphgnF \to \RubhbX \\   
    \end{align*}
    that just remember the pair $(C, \ell_S)$ and target maps
    \begin{align*}
        \delta_{\mathbb{Z}_{(p)}}: \PLHURphgnZ \to {\mathcal{M}^{ps-log}_{g,N}} \\
        \delta_{\mathbb{F}_p}: \PLHURphgnF \to {\mathcal{M}^{ps-log}_{g,N}}
    \end{align*}
    remembering only the target curve.
    Let $N_S$ be the logarithmic structure on $S$, $M_S^{Rub}$ be the minimal logarithmic structure on the base for $(C, \ell_S)$ as a section of $\RubhbX$ (or $\mathcal{RUB}^0_{h,b}$ in mixed characteristic) and $M_S^D$ be the minimal logarithmic structure on the base for the curve $D$ as an object of $\mathcal{M}^{ps-log}_{g,N}$. By pulling back $M_S^{Rub}$ by $\phi$ and pulling back $M_S^D$ by $\delta$, we can consider them to be sheaves of monoids on $S$. We know from \citep[Section 4.2]{chen2025tale} that the stalk of $\overline{M}_S^{Rub}$ at $s$ is the free monoid that is generated by the edges of slope $0$ and the negative levels. The stalk of $\overline{M}_S^D$ at $S$ is freely generated by the smoothing parameters of all nodes of $D$. So we have a morphism $M_S^D \oplus_{\mathcal{O}^\times_S}M_S^{Rub} \to N_S$. When $e$ is a node of $C$, so an edge in the dual graph of $C$ that maps by $f$ with multiplicity $n_e$ to a node $e'$ of $D$, then their smoothing parameters $\delta_e$ and $\delta_{e'}$ are linked by $n_e \delta_e = \delta_{e'}$. Let $H \subseteq \overline{N}_S^{gp}$ be the subgroup pointwise generated by these relations. We will now show that the logarithmic structure associated to the image of $\overline{M}_S^D \oplus \overline{M}_S^{Rub}$ in $\overline{N}_S^{gp} /H$ is the minimal monoid.

    We consider the object $(f': C' \to S', \ell_S')$ over the same underlying scheme $S$ and with characteristic monoid $\overline{M}_{S'}$ being associated to the image of $\overline{M}_S^D \oplus \overline{M}_S^{Rub}$ in $\overline{N}_S^{gp} /H$. We now have to show that in the diagram
    \begin{center}
    \begin{tikzcd}
      M_S  & M_{S''} \arrow[l] \\
       & M_{S'} \arrow[lu] \arrow[u, dotted]
    \end{tikzcd}
    \end{center}
    there exists the dotted arrow. But this is clear as by the arguments above the relations generating $H$ have to hold in $\overline{M}_{S''}$ as well.

    As we have explicitly constructed charts, the minimal logarithmic structure is coherent and by construction compatible with base change, this shows that $\PLHURphgnZ$ and $\PLHURphgnF$ are representable by algebraic stacks with coherent logarithmic structure. 

Our arguments can also be used to give objects of $\mathcal{LOG}\Mapfshgn$ minimal log structures: The logarithmic structure that is generated by a symbol for each edge of the dual graph where we have $n_1e_1=n_2e_2$ if $e_1$ and $e_2$ are mapped to the same same edge of the dual graph of the target curve with multiplicity $n_1$ and $n_2$ respectively. By the minimality criteria this gives us a logarithmic stack $\mathcal{L}{\Mapfshgn}^{log}$ parametrizing degree $p$ morphisms between log smooth curves that are either log étale or wild at each point and whose underlying stack is proper by the simultaneous semi-stable reduction theorem. Therefore, it follows that 
\begin{align*}
    \PLHURphgnZ &\subseteq \mathcal{RUB}^0_{h,b} \times_{\mathcal{M}_{h,b}^{log}} \mathcal{L}{\Mapfshgn}^{log} \\
    \PLHURphgnF &\subseteq \RubhbX \times_{\mathcal{M}_{h,b}^{log}} \mathcal{L}{\Mapfshgn}^{log}
\end{align*}
are strict closed substacks. As the fiber product of proper spaces is proper, we conclude that $\PLHURphgnZ$ and $\PLHURphgnF$ are proper. By construction the automorphism group scheme of an object of $\PLHURphgnZ$ or $\PLHURphgnF$ is a sub group scheme of $\RubhbX$ and $\mathcal{RUB}^0_{h,b}$ respectively, and as the latter stacks are Deligne-Mumford, so are $\PLHURphgnZ$ and $\PLHURphgnF$.
\end{proof}

The stacks we have just constructed are proper stacks that in the mixed characteristic case contain all admissible covers over a base of characteristic $0$ and in the equicharacteristic case all Hurwitz maps between smooth curves. Therefore, they are good candidates for a compactification. However, as we will see in the example in Section~\ref{sec:example}, they are too large as $\PLHURphgnZ$ can also contain irreducible components that generically are of positive characteristic and similarly, $\PLHURphgnF$ can contain components that generically parametrize covers of non smooth curves. This is similar to the compactification of the Moduli space of abelian differentials described in \cite{bainbridge2019moduli} where they cut out the component that parametrizes generically smooth curves by a global residue condition. In our case, this will be resembled by an exactness and quasi-exactness condition that we will formulate now.

For the following construction of bivariant differential forms, let $(f: C_S \to D_S, \ell_S)$ be an object of $\PLHURphgnZ$ or $\PLHURphgnF$ such that $\pi: C_S\to S$ is a nuclear log curve. Note that by Definition~\ref{def: enhanced_graph} and Lemma~\ref{lem: rescaling_ensamble} from this data, we get an enhanced level graph $\Gamma$ and a rescaling ensemble $s_i$. For each level $i$, let $C_{\leq i}$ be the curve over the closed stratum that is obtained by removing all irreducible components of level greater than $i$. We furthermore choose open subsets $U_i$ of $C_S$ that are open neighbourhoods of $C_{\leq i}$ in $C$ such that their collection satisfies $U_j \subset U_i$ if $j<i$. For example, one can take the $U_i$ to be complements of the closure of higher levels.

We observe that $\omega^{log}_f$ has a distinguished section:

\begin{definition}
    Let $(f: C_S \to D_S, \ell_S)$ be an object of $\PLHURphgnZ$ or $\PLHURphgnF$. The trace form $\tau_f \in H^0(\omega_f(-\ell_S))$ is the section coming from the structure morphism
    \begin{align*}
        f^* \Omega_{D/S}^{log} \to \Omega^{log}_{C/S} .
    \end{align*}
    This makes sense, because
    \begin{align*}
        \omega_f\coloneqq (f^*\Omega_D^{log})^\vee \otimes_{\mathcal{O}_C} \Omega^{log}_C \cong \mathcal{HOM}(f^*\Omega_D^{log}, \Omega^{log}_C).
    \end{align*}
\end{definition}

As described in \citep[Section 5.4.]{chen2025tale} by using a log splitting, one can construct out of an isomorphism 
\begin{align*}
    \omega_f^{log} \to \mathcal{O}_C(\ell_S)
\end{align*}
a collection of non-zero sections
\begin{align*}
\psi_i \in H^0(U_i, \omega^{log}_f(-\ell_S))
\end{align*}
that satisfy for each level $j,i$ with $j<i$
\begin{align*}
    \psi_i |_{U_i \cap U_j} = s_j \dots s_{i-1} \psi_j|_{U_i \cap U_j}
\end{align*}
where the $s_i$ are as in Lemma~\ref{lem: rescaling_ensamble}.

Choosing a different isomorphism $\omega_f^{log} \to \mathcal{O}_C(\ell_S)$ corresponds to rescaling each $\psi_i$ by a unit coming from the base, so in particular there is a unique isomorphism that induces on $U_0$ the trace form $\tau_f$.

\begin{remark}
    The trace form vanishes at the generic point of a component iff the map is separable there. Moreover, because of the stability condition, at components where the morphism is the relative Frobenius, the trace form has to have a pole when the orders are counted logarithmically, so with respect to $(dy/y) \otimes (dx/x)^{\vee}$ where $x \mapsto y^p$. In particular, we can conclude that the morphism $f$ of an object of $\PLHURphgnZ$ or $\PLHURphgnF$ is separable precisely at those components of maximal level.
\end{remark}

\begin{definition}
    Let $(f_S: C_S \to D_S, \ell_S)$ be an object of $\PLHURphgnZ$ or $\PLHURphgnF$ such that $\pi: C\to S$ is a nuclear curve. The collection of generalized relative differentials of $f$ is the collection $\psi_i \in H^0(U_i, \omega^{log}_f(-\ell_S))$ coming from that isomorphism $\omega_f^{log} \to \mathcal{O}_C(\ell_S)$ inducing $\tau_f$ on $U_0$.
\end{definition}
We finally have the necessary framework to define our Moduli functor by a condition similar to that used in \cite{bainbridge2019moduli} to cut out the locus of differentials on smoothable curves out of the whole space of families of generalized simple multi-scale differentials:

\begin{definition}
    Let $(f_S: C_S \to D_S, \ell_S)$ be an object of $\PLHURphgnZ$ or $\PLHURphgnF$. The family is said to satisfy the exactness and quasi-exactness condition if each point $s$ of $S$ has a nuclear neighbourhood $S'$ with $s$ in the closed stratum, such that the pullback of $(f_S: C_S \to D_S, \ell_S)$ to $S'$ satisfies the following property:
    
    For each negative level $i$ of the curve $C_{S'}$, the differential form $\psi_i$ is exact at all smooth points of the closed fiber of level $i$ if $s_i \neq p \in \mathcal{O}_{S'}/\mathcal{O}_{S'}^\times$ and quasi-exact at all smooth points of the closed fiber of level $i$ if $s_i = p \in \mathcal{O}_{S'}/\mathcal{O}_{S'}^\times$. In the mixed characteristic case of $\PLHURphgnZ$ we further require that there are no levels below the level of $p$, so there are no $s_j$ with $s_j >p$.
\end{definition}
This will be condition that cuts our desired Moduli stacks out of $\PLHURphgnZ$ and $\PLHURphgnF$:
\begin{definition}
    The Moduli functors $\LHURphgnZ$ and $\LHURphgnF$ are those subfunctors of $\PLHURphgnZ$ and $\PLHURphgnF$ respectively that are cut out by the exactness and quasi-exactness condition.
\end{definition}

These are indeed Moduli functors, so the functor maps morphisms to pullback diagrams, because we can pull back étale neighborhoods.
\begin{remark}
    An object of $\LHURphgnZ$ or $\LHURphgnF$ over an algebraically closed field of characteristic $p$ can be considered as a morphism together with a $p$-enhancement in the sense of \citep[Section 5.3.]{brezner2017lifting}. 

   As $\LHURphgnF$ is defined over $\mathbb{F}_p$, $p$ is zero so cannot be a rescaling parameter, and we conclude that in the equicharacteristic case quasi-exact forms do not appear at all.
\end{remark}
\begin{theorem}
    The Moduli functors $\LHURphgnZ$ and $\LHURphgnF$ are subfunctors of $\PLHURphgnZ$ and $\PLHURphgnF$ respectively, that contain the locus of covers of smooth curve. Their points in characteristic $p$ correspond to those covers that can be lifted to covers of smooth curves.
\end{theorem}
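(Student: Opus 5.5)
The plan has three parts, one for each assertion in the statement.

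\emph{Subfunctor.} This is formal. Exactness and quasi-exactness are étale-local conditions on the forms $\psi_i$; by Theorem~\ref{thm: rel_car_prop} they amount to $\rc(\psi_i)=0$, respectively $\rc(\psi_i)$ being a unit from the base. These conditions are independent of the choice of coordinates and are stable under pulling back along a nuclear neighbourhood. The rescaling ensemble $s_i$ and the normalization of the $\psi_i$ by the trace form $\tau_f$ on $U_0$ are both compatible with base change. So the condition is preserved under pullback, and it defines a subfunctor of $\PLHURphgnZ$ and of $\PLHURphgnF$.

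\emph{Covers of smooth curves lie in the subfunctor.} Let $f\colon C\to D$ be a cover of smooth curves over $S$. The dual graph of $C$ has a single vertex, which sits at level $0$, so there are no negative levels. The exactness and quasi-exactness condition is therefore empty, and the only thing to check is that $(f,\ell_S)$ is an object of $\PLHURphgnZ$ or $\PLHURphgnF$. We take $\ell_S$ to be the divisor of the trace form $\tau_f$. In mixed characteristic this divisor is supported on the markings with $\xi_i=0$, since the cover is log étale. In equicharacteristic it is $\sum\xi_i p_i$ by the choice of $\Xi$ and the Riemann--Hurwitz formula. In either case $\omega^{log}_f(-\ell_S)\cong\mathcal{O}_C$.

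\emph{Points in characteristic $p$ are exactly the liftable covers.} Here I would reduce to the main theorem of \cite{brezner2017lifting}. Let $k$ be algebraically closed of characteristic $p$, and take an object $(f,\ell)$ over $k$ with its log structure. The data $(\ell,\{\psi_i\},\{s_i\})$ should be matched with a $p$-enhancement in the sense of \citep[Section 5.3]{brezner2017lifting}, as the remark preceding the statement indicates. The matching goes as follows.
\begin{enumerate}
\item The levels and the slopes $\kappa_e$ of $\ell$ give the piecewise linear function on the tropicalization, that is, the metrized dual graph together with the separable/inseparable labelling.
\item The $\psi_i$ become the bivariant forms on the inseparable components.
\item The condition $s_i=p$ versus $s_i\neq p$ picks out the components where Brezner--Temkin require mixed forms versus exact forms.
\end{enumerate}
Because the $\psi_i$ are normalized by $\tau_f$ on $U_0$, the compatibility $\psi_i=s_j\cdots s_{i-1}\psi_j$ reproduces their gluing conditions along the edges, via Lemma~\ref{lem: rescaling_ensamble}. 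Once the dictionary is in place, their theorem says that $f$ lifts to a cover of smooth curves over a DVR exactly when such an enhancement exists.

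\emph{Main obstacle.} The difficult step is making this dictionary precise in both directions. One direction starts from a lift $f_R$ over a DVR $R$. Simultaneous semistable reduction \citep{liu2006stable} gives a log map over $R$. Its trace form, rescaled level by level, gives $\ell_S$ and the $\psi_i$. The constraint that no level lies below $p$ comes from the fact that the generic fibre has characteristic $0$, because the slopes are bounded by the valuation of $p$. The other direction starts from the data and must produce a $p$-enhancement; this is where the exact and quasi-exact forms must agree with Brezner--Temkin's notions, which is Lemma~\ref{lem: car_sur} together with the explicit lemma on $\mathbb{A}^1$. In the equicharacteristic case "lifting" means lifting to covers of smooth curves over $k[[t]]$. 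Quasi-exact forms do not occur there, so I would run the same argument with only exact forms, taking Brezner--Temkin's equicharacteristic variant.
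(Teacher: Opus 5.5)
Your subfunctor argument and your check that covers of smooth curves satisfy the condition vacuously are fine. For the direction ``point of $\LHURphgnZ$ or $\LHURphgnF$ $\Rightarrow$ liftable'' you follow the paper's route: identify the data with a $p$-enhancement and apply Brezner--Temkin. As written, however, the argument has two gaps.

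\textbf{The lifting direction stops at an analytic lift.} \citep[Theorem 6.2.3]{brezner2017lifting} is not a statement about DVRs. It produces a finite cover $f_K$ of smooth Berkovich curves over a complete algebraically closed non-archimedean field $K$, whose log reduction is the given $(f_l,\ell_l)$. To conclude that your $k$-point lies in the closure of the smooth locus, you still have to:
\begin{enumerate}
\item choose such a $K$ with residue field containing $k$, e.g.\ the completed algebraic closure of $\mathrm{Frac}\,W(\bar k)$, resp.\ of $\bar k((t))$;
\item algebraize $f_K$ by GAGA for projective curves;
\item extend it by properness to an object of $\PLHURphgnZ$ (resp.\ $\PLHURphgnF$) over the valuation ring of $K$;
\item check that the special fibre of this object is $(f_l,\ell_l)$. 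This uses that the formal model from the Berkovich side is the completion of the algebraic stable model \citep[Lemma 5.1]{amini2015lifting}.
\end{enumerate}
Without these steps you only have an analytic lift, not a point in the closure of the smooth locus.

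\textbf{The converse direction is not actually checked.} This is the claim that a specialization of smooth covers satisfies the exactness and quasi-exactness condition. You only say how $\ell_S$ and the $\psi_i$ arise from $\tau_f$, and then defer to an unconstructed dictionary with the reduction side of Brezner--Temkin. That route would also need the comparison in step 4 run in reverse.

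The missing idea is a short local computation that makes this direction self-contained. At a smooth point of a component of negative level $i$, write $f$ as $x\mapsto y^p+mh$, with $m$ in the maximal ideal and either $d_Ch\neq 0$ or $h=0$ (the latter only in mixed characteristic). Then
\[
\tau_f=(d_Dx)^\vee\otimes\bigl(py^{p-1}d_Cy+m\,d_Ch\bigr).
\]
\begin{itemize}
\item In equicharacteristic the first term vanishes, $s_i$ has the order of $m$, and $\psi_i=(d_Dx)^\vee\otimes d_Ch$ is exact.
\item In mixed characteristic, $s_i$ has order at most that of $p$, since otherwise $\psi_i$ acquires a pole along the whole component. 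This, rather than a bound on slopes, is where ``no level below $p$'' comes from.
\item The $y^{p-1}d_Cy$ term survives reduction exactly when $s_i$ has the order of $p$, and then $\psi_i$ is quasi-exact. Otherwise $\psi_i$ is exact.
\end{itemize}

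Finally, what matches your exactness notions with Brezner--Temkin's is the explicit lemma on $\mathbb{A}^1$ together with Theorem~\ref{thm: rel_car_prop}. Lemma~\ref{lem: car_sur} plays no role here.
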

\begin{proof}
In this proof, we will always assume that the objects we consider have the minimal log structure described in the proof of Theorem~\ref{thm:plhur_rep}. We start by showing that the closure of the locus of covers between smooth curves in $\PLHURphgnZ$ and $\PLHURphgnF$ is contained in $\LHURphgnZ$ and $\LHURphgnF$ respectively. 
\paragraph*{\underline{Closure of Locus of Smooth Covers: Determination of $s_i$ and $\psi_i$}}
Let $R$ be a discrete valuation ring and $(f_S: C_S \to D_S, \ell_S)$ be an object of $\PLHURphgnZ$ or $\PLHURphgnF$ respectively, such that the fiber over the generic point of $R$ is a cover between smooth curves. We will show that then $(f_S: C_S \to D_S, \ell_S)$ already is an object of $\LHURphgnZ$ or $\LHURphgnF$ respectively. We have seen that the admissible cover space is the generic fiber of $\PLHURphgnZ$ which is known to be proper. So we can assume that when considering $\LHURphgnZ$ the discrete valuation ring $R$ is of mixed characteristic $(0,p)$ and in the case of $\LHURphgnF$ the ring $R$ will be of equicharactistic $p$. The section $\ell_S$ determines the rescaling ensemble $s_i$ and the collection of rescaled bivariant differentials $\psi_i$. So all we have to check is that the rescaling ensemble and the collection of rescaled bivariant differentials satisfy the exactness and quasi-exactness condition.
\paragraph*{\underline{Closure of Locus of Smooth Covers: Description of $f$}}
To see this, let $q$ be a smooth point of the special fiber of $C_S$ at a non-maximal level $i$. Then locally around $q$ the morphism $f$ is given by a ring map $R[x] \to T$ with the ring $T$ étale over $R[y]$ for some $y$. As $f$ is inseparable at $q$, the parameter $y$ can be chosen such that $f$ is given by $x \mapsto y^p+m h$, where $m$ is an element from the maximal ideal of $R$ and $h$ is some function in $T$. Note that we can choose the coordinates in such a way, such that $d_Ch \neq 0$ or $h=0$, and $h=0$ can only appear in the mixed characteristic case, as else the morphism was inseparable also at the generic fiber contradicting our assumption.
\paragraph*{\underline{Closure of Locus of Smooth Covers: Exactness and Quasi-Exactness}}
From the last description we know that locally, the trace form is given by $\tau_f= (d_D x)^\vee  \otimes (p y^{p-1}d_Cy +m d_Ch )$. In the equicharacteristic case $p y^{p-1}=0$, the rescaling parameter $s_i \in R$ is of the same order as $m$ and the form $\psi_i$ is $(d_D x)^\vee  \otimes ( d_Ch )$, so is exact. In the mixed characteristic case, we see that $s_i$ is of order at most $p$, as else $\psi_i$ would have a pole along the whole component because of the term consisting of $py^{p-1}$. The summand $y^{p-1}$ is nonzero after reduction iff $s_i$ is of the same order as $p$. In this case, the form we obtain is quasi-exact. If $s_i$ is of order less than $p$, the form is exact as desired because the term containing $py^{p-1}$ vanishes. We have seen that the semistable model of a finite cover between smooth curves satisfies the exactness and quasi-exactness condition. So the closure of the locus of covers between smooth curves is contained in $\LHURphgnZ$ or $\LHURphgnF$ respectively.
\paragraph*{\underline{Lifting: Setup}}
Now we want to show that any point of $\LHURphgnZ$ or $\LHURphgnF$ is in the closure of $\PLHURphgnZ$ or $\PLHURphgnF$ respectively. Our main ingredient will be the lifting result from \citep[Theorem 6.2.3.]{brezner2017lifting} Let $r$ be such a $k$-point with $k$ of characteristic $p$ corresponding to an object $(f_k: C_k \to D_k, \ell_k)$. Let $K$ be a complete algebraically closed valued field that in the case of $\LHURphgnZ$ is of mixed characteristic, in the case of $\LHURphgnF$ of equicharacteristic with ring of integers $R$ and such that the residue field is a field extension of $k$. Such a $K$ can be constructed the following way: In the mixed characteristic case, one can take the ring of Witt vectors $W(\overline{k})$ over the algebraic closure $\overline{k}$ of $k$ and take the completion of the algebraic closure of its fraction field. In the equicharacteristic case, one can take $K$ to be the completion of the algebraic closure of the fraction field of $\overline{k}[[t]]$.
\paragraph*{\underline{Applying the Analytic Lifting Theorem}}
We can pull back $r$ to the residue field $l$ of $R$ and after pulling back the log structure further, we get a morphism between nice log curves in the language of \cite{brezner2017lifting}, the piecewise linear function $\ell_k$ together with the collection of rescaled bivariant differentials constitutes a $p$-enhancement in the sense of \citep[5.3.3.]{brezner2017lifting}. Therefore, by \citep[Theorem 6.2.3.]{brezner2017lifting} there is a finite cover $f_K$ of $K$-analytic Berkovich smooth curves that induces $f_l$ as its special fiber and $\ell_l$ as PL function in the log reduction. 
\paragraph*{\underline{Algebraization}}
As the Berkovich analytification functor is an equivalence of categories between projective algebraic and projective analytic varieties, we know that there is a degree $p$ cover between smooth projective algebraic curves over $K$ inducing $f_K$ under analytification that we will call $f_K$ as well. Because of properness, this uniquely extends to an object of $\PLHURphgnZ$ or $\PLHURphgnF$ respectively over $Spec(R)$ with closed fiber being $(f_l: C_l \to D_l, \ell_l)$ as the minimal formal model of $f_K$ coming from Berkovich analytic geometry has the same special fiber as the algebraic stable model of $f_K$ as the formal model is the completion of the algebraic model \citep[Lemma 5.1.]{amini2015lifting}. 

Recall $(f_l: C_l \to D_l, \ell_l)$ is pulled back from $r$. As the generic fiber is a cover of smooth curves, we know that $r$ is in the closure of the locus of $\PLHURphgnZ$ or $\PLHURphgnF$ respectively that parametrizes covers between smooth curves.
\end{proof}

    As from this description it follows properness, we can immediately conclude the following corollary:
    \begin{corollary}
        The underlying stacks of $\LHURphgnZ$ and $\LHURphgnF$ are proper Deligne-Mumford stacks.
    \end{corollary}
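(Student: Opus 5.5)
The corollary reduces to three facts: $\LHURphgnZ$ and $\LHURphgnF$ are closed substacks of $\PLHURphgnZ$ and $\PLHURphgnF$; the latter are proper Deligne--Mumford by Theorem~\ref{thm:plhur_rep}; and closed substacks of proper Deligne--Mumford stacks are again proper Deligne--Mumford. The Deligne--Mumford property is immediate, since automorphism group schemes of objects of a substack are subgroup schemes of the ambient automorphism groups, and these are unramified. So the work lies entirely in establishing closedness.

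For closedness I use the preceding theorem, which identifies the points of $\LHURphgnZ$ and $\LHURphgnF$ with the closure of the locus of covers between smooth curves. The first half of its proof shows that any family over a discrete valuation ring whose generic fiber is a smooth cover satisfies the exactness and quasi-exactness condition, so the closure lies inside $\LHURphgnZ$ or $\LHURphgnF$. The second half, via the lifting theorem of \citep[Theorem 6.2.3.]{brezner2017lifting} and algebraization, shows every point of $\LHURphgnZ$ or $\LHURphgnF$ lies in that closure. Therefore the underlying topological space of $\LHURphgnZ$ (resp. $\LHURphgnF$) equals the closure of the smooth locus inside $\PLHURphgnZ$ (resp. $\PLHURphgnF$). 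I then take the closed substack structure given by this closure, say the reduced one or the scheme-theoretic closure.

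With the closed substack in hand, properness follows. A closed immersion is proper, and composing it with the proper structure morphism of $\PLHURphgnZ$ (resp. $\PLHURphgnF$) over $\Z_{(p)}$ (resp. $\mathbb{F}_p$) gives properness of the underlying stack. Alternatively, one can verify the valuative criterion directly: a family over the generic point of a discrete valuation ring extends over the whole ring in $\PLHURphgnZ$ by properness. The first half of the preceding proof then shows that the extension satisfies the exactness and quasi-exactness condition, because the nuclear neighborhood computation only uses the local form $x\mapsto y^p+mh$ of the cover.

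The main obstacle is matching stack structures rather than points. The exactness condition defines $\LHURphgnZ$ as a subfunctor, and I must check that this subfunctor is represented by the closed substack above, not merely that the two have the same points. To handle this, I would show the condition is closed in families. By Theorem~\ref{thm: rel_car_prop}, exactness of $\psi_i$ is the vanishing of $\rc(\psi_i)$ and quasi-exactness is $\rc(\psi_i)$ being a unit from the base; both are closed conditions on the fibers of the vector bundle map of Remark~\ref{rem: cartier_linear}. The conditions on the $s_i$ are locally constant on logarithmic strata. If any scheme-theoretic subtlety remains, it does not affect the statement, since properness and the Deligne--Mumford property depend only on the reduced structure together with the valuative criterion.
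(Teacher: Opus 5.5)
This is essentially the paper's argument: the paper reads the corollary off the preceding theorem, which shows that $\LHURphgnZ$ and $\LHURphgnF$ are exactly the closure of the smooth-cover locus inside the proper Deligne--Mumford stacks $\PLHURphgnZ$ and $\PLHURphgnF$ of Theorem~\ref{thm:plhur_rep}, so both properties are inherited. Your extra remarks go beyond what the paper checks, but note that quasi-exactness ($\rc(\psi_i)$ a \emph{unit} from the base) is locally closed rather than closed; also, your alternative valuative-criterion argument as written only applies to DVRs whose generic fiber is a cover of smooth curves, since that is what the first half of the preceding proof assumes.
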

    In order to analyze $\LHURphgnZ$ and $\LHURphgnF$ further, we will need the following lemma that allow us to describe the minimal log structures of their objects.
    \begin{lemma} \label{cor:min_log_stru_cov}
        The minimal logarithmic structure on the base for a cover $f: C \to D$ in the stack $\LHURphgnZ$ in characteristic $p$ is a monoid of rank $\#E_D^{hor}+\#V_C^{ex}+1$, where $\#E_D^{hor}$ is the number of edges of the dual graph of $D$ that are in the image of edges of slope $0$ of $C$ and $\#V_C^{ex}$ is the number of irreducible components of $C$ with an exact differential form. In the equicharacteristic case, so for a cover $f: C \to D$ in $\LHURphgnF$ this minimal monoid is of rank $\#E_D^{hor}+\#V_C^{ex}$. If $p=2$, these monoids are free.
    \end{lemma}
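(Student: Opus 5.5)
Starting from the proof of Theorem~\ref{thm:plhur_rep}, the minimal characteristic monoid at a point $s$ of characteristic $p$ is the saturation of the image of $\overline{M}_S^D \oplus \overline{M}_S^{Rub}$ in $\overline{N}_S^{gp}/H$. Here $\overline{M}_S^{Rub}$ is freely generated by the negative levels $\sigma_{-1},\dots,\sigma_{-N}$ (the images of the rescaling parameters $s_i$) and by the smoothing parameters of the slope-$0$ edges of $C$. $\overline{M}_S^D$ is freely generated by the smoothing parameters $\delta_{e'}$ of the nodes of $D$. The group $H$ is generated by the relations $n_e\delta_e=\delta_{e'}$. Inside $\LHURphgnZ$ we add one more relation: the level whose form is quasi-exact has $s_i=p$, i.e.\ $\sigma_i$ is identified with the image of the generator of the base monoid $\N$. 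The plan is to count the generators that survive these relations.

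First, edges $e$ of $C$ of nonzero slope $\kappa_e$ joining levels $j<i$ satisfy $\kappa_e\delta_e=\sigma_j+\dots+\sigma_{i-1}$ by Lemma~\ref{lem: rescaling_ensamble}. Hence they contribute no new generator after saturation, and neither does their image $\delta_{e'}=n_e\delta_e$ in $D$. Edges of slope $0$ are horizontal. Their $\delta_e$ is identified with $\delta_{e'}/n_e$ and with the other horizontal preimages of the same $e'$, so each horizontal edge of $D$ contributes exactly one free generator. This gives $\#E_D^{hor}$.

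Second, I count the levels. By the remark on the trace form, the separable components are exactly those at level $0$. Every negative level carries an exact or quasi-exact $\psi_i$. In mixed characteristic, the exactness condition says there is exactly one level (the lowest) with $s_i=p$, and that generator is fused with the base generator. The remaining negative levels carry exact forms. I will argue, using the explicit shape of $\psi_i$ in the closure argument, that every exact component sits on its own level, so these levels correspond bijectively to components in $V_C^{ex}$. Together with the base direction this gives rank $\#E_D^{hor}+\#V_C^{ex}+1$. In equicharacteristic there is no $p$ and no base generator, so the rank is $\#E_D^{hor}+\#V_C^{ex}$.

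Finally, for freeness when $p=2$: every multiplicity $n_e$ and every nonzero slope lies in $\{1,2\}$. I will show that each relation $\kappa_e\delta_e=\sum\sigma$ either expresses $\delta_e$ integrally or forces the sum of the $\sigma$'s to be even in a way absorbed by choosing the generator $\delta_e$ instead of one $\sigma$. The saturated monoid then has a basis with the right number of elements. I expect this step to be the main obstacle, along with identifying levels with exact components. For $p$ odd, saturation genuinely creates non-free monoids, so for $p=2$ I would check case by case over the possible local configurations at a node that the saturation stays simplicial and unimodular.
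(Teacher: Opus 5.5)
Your rank count follows the paper's route. Slope-$0$ edges of $C$ over a common node of $D$ are identified, which gives $\#E_D^{hor}$ generators. Each negative level gives one generator, with the quasi-exact level fused with $\log(p)$, which is absent over $\mathbb{F}_p$. But the two steps you postpone are genuine gaps, and the routes you propose do not close them.

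The identification of exact levels with exact components cannot come out of the closure argument. That argument only shows that $\psi_i$ is exact or quasi-exact on the components of level $i$. The exactness condition is imposed component by component and is invariant under rescaling, so nothing prevents two exact components on different branches of $\Gamma$ (say, attached at two different wild points of one Artin--Schreier component) from having the same value of $\ell$. In $\RubhbX$ they then share one level generator, and the rank drops. The paper does not prove this step either; it just asserts that there are no relations between the levels of exact components. What the argument actually gives is $\#E_D^{hor}$ plus the number of exact \emph{levels} plus one. This agrees with the stated formula only where every exact level consists of a single component.

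For freeness, your premise that every nonzero slope lies in $\{1,2\}$ is false. The multiplicities $n_e$ do, but a slope leaving an Artin--Schreier component is $e_j-1$, an arbitrary odd number; the paper's own graph $\Delta_{1;0}$ has slope $3$. The paper argues freeness differently, only through the relations in $H$: for $p=2$, two nodes of $C$ over the same node of $D$ have multiplicity $1$, so they are \'etale and their parameters are identified. It never discusses the relations $\kappa_e\delta_e=\sigma_j+\dots+\sigma_{i-1}$ that you rightly single out.

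Your case check would fail on those relations. Let an Artin--Schreier component have an edge of slope $5$ to an exact component with three slope-$1$ edges down to quasi-exact components at the minimal level. Let it also have an edge of slope $3$ going directly to a quasi-exact component at the minimal level, skipping the exact level. All local conditions are satisfiable here: the exact component has orders $(-4,2,2,2)$ and is always exact. The relations are $5\delta_1=\sigma_{-1}$, $\delta_2=\sigma_{-2}$ and $3\delta_3=\sigma_{-1}+\sigma_{-2}$. So the monoid has three irreducible elements $\delta_1,\delta_2,\delta_3$ in rank $2$. Even its saturation is not free, because the primitive ray generators $\delta_1,\delta_2$ span a sublattice of index $3$, as $3\delta_3=5\delta_1+\delta_2$. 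Thus neither trading a $\sigma$ for a $\delta_e$ nor saturating yields a basis in general. As it stands this step only gives the rank, and freeness needs an extra hypothesis, for example that no edge skips an exact level.
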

    \begin{proof}
        Smoothing parameters of the nodes of slope $0$ of $C$ that map to the same node of $D$ are identified by $f$. So the smoothing parameters of the nodes of slope $0$ contribute $\#E_D^{hor}$-many generators. If $p=2$ and there is a relation between the smoothing parameters $\delta_1$ and $\delta_2$ of two nodes or markings of $C$, the map is étale at both of them, so the smoothing paramaters get identified. Therefore, if $p=2$ the minimal monoid is free.
        
        For $p>2$ it might happen that nodes of the source curve map to the same node of the target curve with different multiplicities, which leads to the monoid being non free. As there are no relations between the levels of irreducible components with an exact differential form, there are $\#V_C^{ex}+1$ copies of $\N$ that correspond to negative levels. The only difference between the mixed and equicharacteristic case is that the level $log(p)$ that is pulled back from the base does not appear in the equicharacteristic case.
    \end{proof}
    \subsection{First Properties of \texorpdfstring{$\LHURphgnZ$}{LHURphgnZ}}
    In this subsection, we will consider the mixed characteristic case. We want to analyze some properties of the underlying stack of $\LHURphgnZ$ that follow directly from its construction. At first, note that flatness over a discrete valuation ring is equivalent to torsion freeness, so we can conclude that a scheme is flat over a discrete valuation ring iff its generic fiber is schematically dense. We will formulate this as the following lemma:
    \begin{lemma}
        Let $(R,\mathfrak{m)}$ be a discrete valuation ring with fraction field $K$ and $Y\to Spec(R)$ be a scheme over $R$. Let $X \subseteq Y_\eta$ be a closed subscheme of the generic fiber of $Y$ and $\overline{X} \subseteq Y$ be its scheme theoretic closure in $Y$. Then $\overline{X}$ is flat over $R$.
    \end{lemma}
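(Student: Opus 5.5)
The plan is to reduce flatness over the discrete valuation ring $R$ to torsion freeness, as suggested in the sentence preceding the statement. Since flatness is local on $Y$, we may assume $Y=\operatorname{Spec}(B)$ is affine with $B$ an $R$-algebra. Then $Y_\eta=\operatorname{Spec}(B\otimes_R K)$, and the closed subscheme $X$ is given by an ideal $J\subseteq B_K:=B\otimes_R K$. The scheme theoretic closure $\overline{X}$ is $\operatorname{Spec}(B/I)$, where $I$ is the kernel of the composite $B\to B_K\to B_K/J$. Closure is compatible with restriction to opens in this quasi-compact, quasi-separated situation, because the morphism $X\to Y$ is quasi-compact. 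So the affine computations glue, and it suffices to show that $B/I$ is a flat $R$-module.

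Over a discrete valuation ring, which is a PID, a module is flat if and only if it is torsion free. We check this by an injection. By construction $B/I$ embeds into $B_K/J$. The latter is a $K$-vector space, and on it every nonzero element of $R$ acts invertibly, so it is torsion free as an $R$-module. A submodule of a torsion free module is torsion free, so $B/I$ is torsion free and hence flat over $R$.

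The only step requiring care is the globalization. One has to make sure that the scheme theoretic image of $X\to Y$ is computed affine locally by the kernel above. This holds because $X\to Y_\eta\to Y$ is quasi-compact: $Y_\eta\to Y$ is affine, since it is a base change of $\operatorname{Spec}K\to\operatorname{Spec}R$, and $X\to Y_\eta$ is a closed immersion. With that observation, which is standard from the Stacks Project, the argument is complete. For stacks one applies the same reasoning on a smooth atlas.
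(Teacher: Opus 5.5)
Your argument is correct and is the one the paper has in mind. The paper gives no separate proof; it relies on the preceding remark that flatness over a discrete valuation ring is the same as torsion freeness, which is exactly what you verify through the embedding $B/I \hookrightarrow B_K/J$. You also check that $X \to Y$ is affine, so that the scheme theoretic closure is computed affine locally by the kernel; this correctly supplies the detail the paper leaves implicit.
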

   
    \begin{theorem} \label{thm:equ-dim}
        All irreducible components of the special fiber of $\LHURphgnZ$ are of dimension $3g-3+N$.
    \end{theorem}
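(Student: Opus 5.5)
The plan is to reduce the statement to flatness over $\Z_{(p)}$ together with the dimension of the generic fiber. By the preceding theorem, $\LHURphgnZ$ is the closure, inside the proper stack $\PLHURphgnZ$, of the locus of covers between smooth curves, and by the same argument it is the closure of its generic fiber. That generic fiber is the admissible cover stack $\mathcal{LA}dm_A$ over $\mathbb{Q}$. Applying the preceding lemma on scheme-theoretic closures étale locally on an atlas, the underlying stack of $\LHURphgnZ$ is flat over $\Z_{(p)}$. By Theorem~\ref{thm: gen_fibre_smooth}, the generic fiber is of pure dimension $3g-3+N$, being log smooth over a field and locally toric.

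The core step is a dimension count for the special fiber. I would work on an étale atlas $U \to \LHURphgnZ$ and take the closure $\overline{Z}$ in $U$ of an irreducible component $Z$ of $U_\eta$. Then $\overline{Z}$ is an integral scheme, flat and of finite type over the discrete valuation ring $\Z_{(p)}$, with generic fiber of dimension $3g-3+N$ (relative to the atlas). Properness of $\LHURphgnZ$ makes the structure map surjective onto $\operatorname{Spec}\Z_{(p)}$ once the special fiber is nonempty, and this holds for the components meeting it. The standard result for integral schemes flat of finite type over a DVR (EGA IV 14.2, or Görtz--Wedhorn, Theorem 14.114) then says that every irreducible component of $\overline{Z}_s$ has dimension equal to $\dim \overline{Z}_\eta$.

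It remains to check that every irreducible component of the special fiber of $\LHURphgnZ$ arises as a component of some $\overline{Z}_s$. Since $\LHURphgnZ = \bigcup_Z \overline{Z}$, the special fiber is the finite union $\bigcup_Z \overline{Z}_s$. Each of its irreducible components is therefore contained in, and hence by maximality equal to, an irreducible component of some $\overline{Z}_s$, so its dimension is $3g-3+N$. Passing from the atlas back to the stack preserves these equalities, because dimensions of stacks are computed on atlases up to the relative dimension, which here is zero since the stack is Deligne--Mumford and the atlas is étale.

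The main obstacle is the first step: justifying that $\LHURphgnZ$ really is the flat closure of its generic fiber, rather than merely containing the smooth locus densely in characteristic $p$. The preceding theorem shows that every characteristic $p$ point is a specialization of a cover of smooth curves over a mixed characteristic valued field. The subtlety is that this gives closure at the level of points, not scheme-theoretically. I would address this by defining $\LHURphgnZ$ scheme-theoretically as the closure, or by noting that only the underlying reduced stack matters for dimension statements. In that case pointwise density of the generic fiber suffices to run the flat-closure argument on $\LHURphgnZ_{\mathrm{red}}$, since the reduced closure of the generic fiber has the same underlying topological space.
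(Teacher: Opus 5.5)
Your proposal is correct and follows essentially the same route as the paper. The paper identifies the reduced underlying stack of $\LHURphgnZ$ with the reduced closure of the admissible cover stack (via the preceding lifting theorem), deduces flatness over $\Z_{(p)}$, and concludes from the generic fiber having dimension $3g-3+N$ using the fiber-dimension theorem for flat families over a DVR (it cites G\"ortz--Wedhorn, Theorem 14.116); your component-by-component application of the integral case on an \'etale atlas is a slightly more careful version of that final step, since it needs neither properness nor equidimensionality of the total space.
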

    \begin{proof}

In the last section we saw that the reduction of the underlying stack of $\LHURphgnZ$ is the reduction of the closure of the admissible cover stack in the underlying stack of $\PLHURphgnZ$, so in particular is flat over $\mathbb{Z}_{(p)}$. Recall that the admissible cover stack is of dimension $3g-3+N$. The statement now follows from \citep[Theorem 14.116]{gortz2020algebraic} stating that for a proper flat family of schemes with equidimensional source one can conclude that each fiber is equidimensional if one knows that the generic fiber is equidimensional and in this case, also the dimension of the fibers agree.
    \end{proof}
\section{Moduli Spaces of Relevant Objects} \label{sec:rel_obj}
Having defined our objects of interest $\LHURphgnZ$ and $\LHURphgnF$, we now want show evidence that these are indeed interesting objects. In particular, we will show that $\LHURphgnZ$ and $\LHURphgnF$ are log smooth under the technical restrictions $p=2$ and $g=0$. These cases already include the example of degree two covers of the projective line by an elliptic curve that was described in \cite{abramovich1998stable} but could so far not been explained conceptually. To do so, in this section we introduce various Moduli spaces that will parametrize some components of our Hurwitz space and study their properties.

\subsection{The Moduli Space  \texorpdfstring{$\AScovhen$}{AScovhen} of Artin--Schreier Covers with Fixed Ramification Divisor}
\begin{definition}
    Let $\vec{e} =(e_1, \dots e_m) \in \Z_{>0}^m$ be a vector such that $e_i \neq 1 \mod p$ for all $i$ and such that
    \begin{align*}
        \sum_{i=1}^m e_i = \frac{2h}{p-1}+2.
    \end{align*}
    An $n$-marked Artin--Schreier cover with conductors $\vec{e}$ over an $\mathbb{F}_p$-scheme $S$ consists of a smooth marked curve $C$ of genus $h$ with $m+pn$ markings over $S$, so an object of $\mathcal{M}_{h,m+pn}$ over $S$ and a $\mathbb{Z}/p\mathbb{Z}$-action on $C$ such that the quotient $f: C \to C/(\mathbb{Z}/p\mathbb{Z}) \cong 
    \PjS$ is a finite separable morphism of degree $p$ over $S$ such that the ramification divisor of $f$ as in \citep[Proposition 3.3.]{pries2012p} is given by
    \begin{align*}
        R(f) = \sum_{i=1}^m e_i(p-1) [f^{-1}(p_i)]
    \end{align*}
    and such that the markings that do not correspond to ramified points map in an ordered way to $n$ marked points in the target. 

    The Moduli functor $\AScovhen$ maps an $\mathbb{F}_p$-scheme $S$ to the set of $n$-marked Artin--Schreier covers with conductors $\vec{e}$ over $S$.
\end{definition}
\begin{remark} \label{rem:con_pole}
     The condition on the ramification divisor is equivalent to the trace form $\tau_f$ having logarithmic zeroes of order $(e_j-1)(p-1)$ along the $j$-th marking.
    
    Furthermore, note that one can think about the additional $np$ markings as the preimage of $n$ markings of the target curve $\Pj$.
\end{remark}
\begin{remark}
This definition implies by \citep[Section 3.3.]{pries2012p} that after a finite flat extension of $S$ and symbolically writing $x = (x-\infty)^{-1}$, one can express $f$ as 
\begin{align*}
    y^p-y = \sum_{i=1}^m h_i\left(\frac{1}{x-f(p_i)}\right)
\end{align*}
with each $h_i$ being a polynomial of degree $e_i-1$.
\end{remark}
\begin{theorem} \label{thm: as_smooth}
The Moduli space of $\AScovhen$ is smooth of dimension
\begin{align*}
\frac{2h}{p-1}+n-1-\sum_{i=1}^{m} \left \lfloor {\frac{e_i-1}{p}} \right \rfloor = n+m-3+\sum_{i=1}^{m} \left(e_i-1- \left \lfloor {\frac{e_i-1}{p}} \right \rfloor \right)
\end{align*}
\end{theorem}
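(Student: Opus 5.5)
We will parametrize $\AScovhen$ explicitly via the normal form from the preceding remark. After a finite flat base change, a point is given by the images $f(p_1),\dots,f(p_m)$ of the ramification points in $\PjS$, $n$ further ordered marked points on the target, and an Artin--Schreier equation
\begin{align*}
y^p-y=\sum_{i=1}^m h_i\left(\frac{1}{x-f(p_i)}\right),
\end{align*}
with $h_i$ a polynomial of degree exactly $e_i-1$ (so the leading coefficient is a unit) and without constant term. We therefore first build a parameter space $P$. It is the open subscheme of $\mathcal{M}_{0,m+n}\times\prod_{i}\mathbb{A}^{e_i-1}$ where all leading coefficients are invertible. It is smooth of dimension $m+n-3+\sum_i(e_i-1)$ once we use $\mathrm{PGL}_2$ to normalize three of the branch points; when $m+n<3$ we instead quotient by the residual stabilizer, which gives the same dimension count. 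There is a natural morphism $P\to\AScovhen$ that takes the normal model of the equation together with the $\mathbb{Z}/p$-action $y\mapsto y+1$.

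Next we identify the fibers of $P\to\AScovhen$, i.e.\ when two equations define isomorphic covers. Two right-hand sides $g, g'$ give the same cover (with the same generator of $\mathbb{Z}/p$) iff $g'-g=z^p-z$ for some rational function $z$. Since $g$ and $g'$ have poles only at the $f(p_i)$, the function $z$ may be taken with poles only there, with pole order at most $\lfloor (e_i-1)/p\rfloor$. Rescaling the generator multiplies the equation by $\mathbb{F}_p^\times$, a finite ambiguity, and adding constants in $\mathbb{F}_p$ is also finite. Hence $\AScovhen$ is the quotient of $P$ by the free action of the smooth unipotent group scheme $G=\prod_i\mathbb{G}_a^{\lfloor (e_i-1)/p\rfloor}$, acting by $h_i\mapsto h_i+z_i^p-z_i$ with $z_i$ a polynomial in $1/(x-f(p_i))$ of degree $\le\lfloor (e_i-1)/p\rfloor$, up to the finite group mentioned above. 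The action preserves the degree of $h_i$ because $p\lfloor (e_i-1)/p\rfloor<e_i-1$, which uses $e_i\not\equiv 1 \bmod p$. We must also check that the Riemann--Hurwitz constraint $\sum e_i=2h/(p-1)+2$ makes this match the genus condition, and that the ramification divisor is exactly $\sum e_i(p-1)[f^{-1}(p_i)]$, as Remark~\ref{rem:con_pole} indicates. The quotient of a smooth scheme by a free action of a smooth group is then smooth, with dimension $m+n-3+\sum_i\bigl(e_i-1-\lfloor (e_i-1)/p\rfloor\bigr)$. Substituting $\sum(e_i-1)=2h/(p-1)+2-m$ gives the left-hand expression of Theorem~\ref{thm: as_smooth}.

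\textbf{Main obstacle.} The hardest step is showing that this description works in families over arbitrary $\mathbb{F}_p$-schemes $S$, not just over fields. Concretely, we must show that $P\to\AScovhen$ is smooth and surjective, i.e.\ that étale locally on $S$ every family admits such a normal form, and that the action is free with exactly the quotient described. For the normal form, we will compute $H^1$ of the Artin--Schreier sequence on $\PjS$ minus the branch sections. We will use that $\mathcal{O}(\sum\lceil\cdot\rceil)$ has vanishing $H^1$ on $\Pj$, so the reduction of poles of order divisible by $p$ can be carried out uniformly. If that route proves awkward, we will instead verify smoothness by deformation theory. In that approach the tangent space is the cokernel of $z\mapsto z^p-z$ acting on principal parts (where the derivative is $-\mathrm{id}$, so the map is étale), and the obstructions vanish because the parameter space is affine and unobstructed.
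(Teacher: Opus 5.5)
Your proposal is correct in outline but takes a different route from the paper.

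\textbf{How the two routes differ.} The paper cites \citep{pries2012p} for a normal form in families in which the coefficients $a_{k,i}$ with $p\mid k$ have been eliminated. It then identifies the moduli space, fppf-locally, with $\mathcal{M}_{0,n+m}\times\prod_i(\mathbb{G}_a^{t_i-1}\times\mathbb{G}_m)$, and concludes because smoothness is fppf-local. You instead keep all $e_i-1$ coefficients and divide by the free translation action of $G=\prod_i\mathbb{G}_a^{\lfloor (e_i-1)/p\rfloor}$. The dimension is then $\dim P-\dim G$, and smoothness descends along the smooth cover $P\to[P/G]$.

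What your version buys is that no $p$-th roots are needed. The paper's slice is reached from a general equation only by solving equations of the form $c^p=-a_{pj,i}$ for the high-degree $p$-divisible coefficients, which is why it must pass to a finite flat base change. Your action instead has an honest section, $a_{j,i}=0$ for $1\le j\le\lfloor (e_i-1)/p\rfloor$. It is reached by solving $c_{j,i}=a_{j,i}+c_{j/p,i}^p$ recursively, with $c_{j/p,i}=0$ if $p\nmid j$. So $[P/G]$ is even an open subscheme of an affine space over $\mathcal{M}_{0,m+n}$, with no base change. The paper's version is shorter and produces the reduced normal form directly.

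\textbf{First point needing care: the isomorphism criterion.} Smoothness is tested over Artinian bases, and there your criterion \emph{$g'-g=z^p-z$ for some $z$ with poles only at the $f(p_i)$} does not characterize isomorphism of the smooth covers. Pole counting also no longer bounds $z$. Over $k[\epsilon]$ with $u=1/(x-f(p_i))$ one has $\wp(-\epsilon u^{e_i-1})=\epsilon u^{e_i-1}$. So $g=u^{e_i-1}$ and $g'=(1+\epsilon)u^{e_i-1}$ define isomorphic torsors away from the branch locus, yet they lie in different $G$-orbits, because the leading coefficient is $G$-invariant.

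The correct argument goes through the smooth models. An equivariant isomorphism $\phi$ gives $z=\phi^*y'-y$, an invariant section of $\mathcal{O}_C(\sum_i(e_i-1)D_i)$, where $D_i$ are the ramified sections. Comparing leading terms in a local equation of $D_i$ shows that, over any base, such a section lies in $\mathcal{O}_{\PjS}(\sum_i\lfloor (e_i-1)/p\rfloor f(p_i))$. This is what makes $G$, and not a larger formal group, the right group.

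\textbf{Second point: the fallback is circular.} Unobstructedness of $P$ says nothing about $\AScovhen$ until $P\to\AScovhen$ is known to be formally smooth, and that is the normal-form statement itself. Your first route is the right one. For the generator $\sigma$, the kernel of $\sigma-1$ on $f_*\mathcal{O}_C(\sum_i(e_i-1)D_i)$ is the line bundle $\mathcal{O}_{\PjS}(\sum_i\lfloor (e_i-1)/p\rfloor f(p_i))$, whose $H^1$ vanishes. Hence local solutions of $\sigma y=y+1$ with the right pole orders glue to a global one. Then $y^p-y$ has pole order at most $e_i-1$ at $f(p_i)$, with unit leading coefficient since the conductor is $e_i$ fibrewise. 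After removing the constant term étale locally, it is a point of $P$. The remaining local input at the branch sections is exactly what the paper imports from \citep{pries2012p}.
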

\begin{proof}
Representability holds as the space of $\mathbb{Z}/p\mathbb{Z}$-actions on smooth curves of genus $g$ such that the quotient is of genus $0$ is representable by a stack of finite type over $\mathbb{F}_p$ due to \citep[p.1]{maugeais2005compactification} and the condition on the ramification indices cuts out $\AScovhen$ as a locally closed substack.

We set for each $1 \leq i \leq m$
\begin{align*}
t_i=e_i-1-\left \lfloor {\frac{e_i-1}{p}} \right \rfloor \\
M_i \coloneqq \G_a^{t_i-1} \times \G_m
\end{align*}
and define
\begin{align*}
M=\prod_{i=1}^{m} M_i.
\end{align*}
One can write $f$ as in the last remark, and as described in \citep[Section 3.3]{pries2012p} after a finite flat base change $S' \to S$ to an affine $S'=Spec(A)$ the pullback of the cover to $S'$ can be written uniquely as
\begin{align*}
y^p-y=\sum_{i=1}^{m} \sum_{k=1}^{e_i-1} a_{k,i} \frac{1}{(x-f(p_i))^k}
\end{align*}
where $a_{k,i} \in A$ is nilpotent whenever $p$ divides $k$ and $a_{e_i-1,i} \neq 0$. This can be achieved by a change of coordinates that allows us to include $p$-th powers in $x$ into the $y^p-y$ term. For details, see \citep[Remark 3.9.]{pries2012p}. Additionally, we keep track of $n$ markings $q_1, \dots, q_n$ on $\PjS$ away from the images of the ramified points.

Two such marked covers are isomorphic iff there is an automorphism of $\mathbb{P}^1_{S'}$ fixing the location of the $f(p_i),q_j $ and transforming one cover into the other. In particular, when there are at least three marked points, two covers are isomorphic if all the coefficients $a_{k,i}$ agree. So we conclude that for a fixed divisor $D=\sum e_iq_i$ the space of $n$-marked Artin--Schreier covers $f$ with ramification divisor
     \begin{align*}
        R(f) = \sum_{i=1}^m e_i(p-1) [f^{-1}(p_i)]
    \end{align*}
is given by $M$ together with the choice of $n$ markings away from the ramified points on $\PjS$.
%By \citep[Section 5.1]{bertin2000deformations} we can now conclude that this is also true when we deform the divisor but do not collide points. 

We have seen that each point of $S \times_{\mathbb{F}_p} \AScovhen$ has a fppf neighbourhood $S'$ such that the pullback $S' \times_{\mathbb{F}_p} \AScovhen$ is locally isomorphic to $\mathcal{M}_{0,n+m} \times M$ there. As smoothness is fppf local on the base and
\begin{align*}
n+m-3+\dim(M)=\frac{2h}{p-1}+n-1-\sum_{i=1}^{m} \left \lfloor {\frac{e_i-1}{p}} \right \rfloor = n+m-3+\sum_{i=1}^{m} \left(e_i-1- \left \lfloor {\frac{e_i-1}{p}} \right \rfloor \right)
\end{align*}
by trivial arithmetic, it follows the statement.
\end{proof}
\subsection{The Moduli Space of Exact Differential Forms \texorpdfstring{$\Mnexm$}{M0nexm}}
For the rest of this subsection, we fix an integer partition $\mathbf{m}=(m_1, \dots, m_n)$ of $2p-2$ of length $n$.
\begin{definition}
The Moduli space $\Mnexm \subseteq \Mn$ parametrizes those configurations of the markings $(p_1, \dots, p_n)$ on the projective line such that there is an exact non-zero meromorphic section $\omega$ of the sheaf $\Omega_{{\PjS/S}}^{(1-p)}$ considered as the dualizing sheaf of the relative Frobenius with zeroes and poles according to $\mathbf{m}$.
\end{definition}

\begin{theorem} \label{thm:exc_smooth}
    The Moduli space $\Mnexm$ is smooth over $\mathbb{F}_p$ and has dimension
    \begin{align*}
        dim(\Mnexm) = n-4+\sum_{i=1}^n \left \lfloor{\frac{m_i}{p}} \right \rfloor
    \end{align*}
    or is empty.
\end{theorem}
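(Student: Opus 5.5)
The plan is to realize $\Mnexm$ as the image of an incidence variety built from the kernel of the twisted Cartier operator, and to check smoothness by a local coordinate computation.

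\textbf{Setup.} Split the markings into poles ($m_i<0$) and zeroes ($m_i\ge 0$), and put $D_-=\sum_{m_i<0}(-m_i)p_i$. Over the universal curve $\mathbb{P}^1\times\Mn\to\Mn$ consider the rank-$(2p-1+\deg D_-)$ bundle $\mathcal{V}=\pi_*\Omega_{F}(D_-)$; here $\Omega_F$ is the relative dualizing sheaf of the Frobenius, which has degree $2p-2$. By Theorem~\ref{thm: rel_car_prop}(6),(7), the twisted Cartier operator $\rc$ is surjective on global sections onto $\pi_*\mathcal{O}(\sum_{m_i<0}\lceil -m_i/p\rceil p_i')$. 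Its kernel $\mathcal{V}^{ex}$ is therefore a vector bundle, of rank
\begin{align*}
2p-1+\deg D_- -1-\sum_{m_i<0}\left\lceil \tfrac{-m_i}{p}\right\rceil .
\end{align*}
By Theorem~\ref{thm: rel_car_prop}(4), $\mathcal{V}^{ex}$ is exactly the bundle of exact bivariant forms with poles bounded by $D_-$.

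\textbf{Incidence variety.} Inside $\mathbb{P}(\mathcal{V}^{ex})$ I cut out the closed substack $Z$ where the section vanishes to order at least $m_i$ at each $p_i$ with $m_i\ge 0$. Since $\Omega_F$ has degree $2p-2=\sum m_i$, such a section has divisor exactly $\sum m_ip_i$. On $\mathbb{P}^1$ a section with prescribed divisor is unique up to scalar, so $Z\to\Mn$ is a monomorphism with image $\Mnexm$. It therefore suffices to show that $Z$ is smooth of the claimed dimension.

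\textbf{Local model and the key step.} I would rigidify three points and write an exact form as $(dx)^\vee\otimes dh$, with $h$ a rational function taken modulo $p$-th powers and constants. Near a zero $p_i$ of order $m_i$, the vanishing conditions on $dh$ only constrain the coefficients of $(y-p_i)^k$ with $p\nmid k$. This is why only $m_i-\lfloor m_i/p\rfloor$ of the naive $m_i$ conditions are independent, and it is the source of the correction $\sum\lfloor m_i/p\rfloor$. At a pole the analogous count is already built into the rank of $\mathcal{V}^{ex}$. Subtracting the independent conditions from $(n-3)+\operatorname{rank}\mathcal{V}^{ex}-1$ should give $n-4+\sum\lfloor m_i/p\rfloor$; this is a routine check using $\sum m_i=2p-2$.

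\textbf{Main obstacle.} The hard step is transversality: the independent vanishing conditions must have linearly independent differentials at every point of $Z$, with the positions $p_i$ allowed to move. I would try to verify this with the Jacobian in the coordinates $(p_i,\text{coefficients of }h)$. Moving $p_i$ contributes derivatives that shift the coefficient of $(y-p_i)^{m_i}$. The coefficients of $h$ in the non-$p$-divisible degrees can be varied freely, because Theorem~\ref{thm: rel_car_prop}(7) gives surjectivity on global sections of $\mathbb{P}^1$. If the Jacobian has full rank, smoothness follows from the Jacobian criterion, and otherwise $Z$, and hence $\Mnexm$, is empty.
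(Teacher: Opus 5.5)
Your setup is fine as far as it goes. $\mathcal{V}^{ex}$ is a vector bundle, and $Z\to\Mn$ is a closed immersion onto $\Mnexm$. On exact forms the coefficients of $(y-p_i)^{kp-1}$ vanish identically, so $Z$ is locally cut out by $\sum_{m_i\ge 0}(m_i-\lfloor m_i/p\rfloor)$ equations in the smooth space $\mathbb{P}(\mathcal{V}^{ex})$. Your count therefore gives $\dim_z Z\ge n-4+\sum_i\lfloor m_i/p\rfloor$ at every point, which is exactly the lower bound the paper gets from its degeneracy-locus description.

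The content of the theorem, however, is the matching upper bound on tangent spaces, and you do not prove it. You call it the main obstacle and only say you would try the Jacobian, and the reason you give for expecting full rank does not work. Applied to $\Omega_F(D_-)$, item (7) of Theorem~\ref{thm: rel_car_prop} only shows that $\mathcal{V}^{ex}$ has constant rank. It says nothing about prescribing the jets at the zeros independently. In fact, take a point $z=(\text{configuration},[\omega])$ of $Z$ and the fibrewise jet map $\mathcal{V}^{ex}_z\to\bigoplus_{m_i\ge 0}J_i$, with $\dim J_i=m_i-\lfloor m_i/p\rfloor$. Its kernel is exactly $k\omega$, so by your own rank formula its cokernel has dimension $1-\sum_i\lfloor m_i/p\rfloor$. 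This is positive whenever $\sum_i\lfloor m_i/p\rfloor\le 0$. So the conditions are \emph{not} independent in the fibre directions, and the whole deficit has to be made up by the $n-3$ directions that move the points. That is the missing step, and it is the heart of the proof.

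The missing idea is the paper's first-order computation in the base directions. Since $\omega$ is determined up to scalar by the configuration, you can drop $Z$ and work directly with $\omega=\prod_i(y-p_i)^{m_i}\frac{dy}{dx}$. Moving $p_i$ to $p_i+a_i\epsilon$ changes $\omega$ by $-\epsilon\,\omega\sum_i\frac{m_ia_i}{y-p_i}$. Write $\omega=G^p\omega'$ with $G=\prod_i(y-p_i)^{\lfloor m_i/p\rfloor}$ and $\omega'=\prod_i(y-p_i)^{m_i'}\frac{dy}{dx}$, where $0\le m_i'<p$. Using $\mathfrak{tc}(f^p\eta)=f\,\mathfrak{tc}(\eta)$, first-order exactness becomes $\mathfrak{tc}\bigl(\omega'\sum_i\frac{m_i'a_i}{y-p_i}\bigr)=0$.

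By partial fractions these forms fill out an entire space of global sections $H^0(F_*\Omega_F(-D))$ for an explicit divisor $D$. The kernel of this map consists exactly of the directions that move points with $p\mid m_i$. Surjectivity of $H^0(\mathfrak{tc})$ on that particular space then gives tangent dimension $\#I_p+(n-3-\#I_p)-(1-\sum_i\lfloor m_i/p\rfloor)=n-4+\sum_i\lfloor m_i/p\rfloor$.

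In your language, this is the statement that the base columns of your Jacobian surject onto the cokernel above. Your remark that moving $p_i$ only touches the top condition at $p_i$ (and does nothing when $p\mid m_i$) is correct, but it does not show that these directions span that cokernel.

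Finally, your fallback ``otherwise $Z$, and hence $\Mnexm$, is empty'' is a non sequitur. A point where the Jacobian drops rank is a point of $Z$, so $Z\neq\emptyset$, and you would have a singular point or an excess-dimensional component, contradicting the theorem. The ``or is empty'' clause only covers the case where no configuration carries such a form.
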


\begin{proof}
    To see that $\Mnexm$ is a closed substack of $\mathcal{M}_{0,n}$, note that it is cut out by the vanishing locus of a twisted Cartier operator by Theorem~\ref{thm: rel_car_prop}. So similar to \cite{polishchuk2006moduli} it can be constructed as the degeneracy locus of a map of vector bundles on $\mathcal{M}_{0,n}$.

    From considering the ranks of these vector bundles, one gets

 \begin{align*}
        dim(\Mnexm) \geq n-4+\sum_{i=1}^n \left \lfloor{\frac{m_i}{p}} \right \rfloor
    \end{align*}
    from  \cite{geertsen2002degeneracy}.
    
    We will assume that all markings are away from $\infty$, this can always be achieved by a Möbius transformation. Let $(C,p_1, \dots, p_n)$ be a $k$-point of $\Mnexm$, for $k$ an algebraic closure of $\mathbb{F}_p$. From our discussion of the Cartier operator we know that one can write 
    \begin{align*}
        \omega= \prod_{i=1}^n (y-p_i)^{m_i}\frac{dy}{dx} =\sum_{i=0}^{p-1} f_i^p y^i \frac{dy}{dx}    
    \end{align*}
    for some rational functions $f_i$ with $f_{p-1}=0$. Because of the linearity of the twisted Cartier operator, this is in fact equivalent to 
    \begin{align*}
        \rc\left(\prod_{i=1}^n (y-p_i)^{m_i-p \left \lfloor{\frac{m_i}{p}} \right \rfloor}\frac{dy}{dx}\right)=0.
    \end{align*}
    To simplify notation, we set 
    \begin{align*}
        m'_i& \coloneqq m_i-p\left \lfloor \frac{m_i}{p} \right \rfloor
    \end{align*}
    and note that $0 \leq m'_i < p$. Those $i$ where $m'_i=0$, so $p  | m_i$, do not appear in this expression. The set of these indices $i$ were $m_i$ is divisible by $p$ will be denoted by $I_p$.

    A tangent vector to $(\Pj, p_1, \dots, p_n)$ in $\Mn$ is an element of $H^1(\Omega_{\Pj}^\vee(-\sum p_i))$. It is defined by how it deforms the markings, so it can be considered as an element of $k^{n-3}$ by the isomorphism
    \begin{align*}
        (a_1, \dots, a_{n-3}) \mapsto (\mathbb{P}_{k[\epsilon]}, p_1 +a_1\epsilon, \dots, p_{n-3}+a_{n-3}\epsilon, p_{n-2}, p_{n-1}, p_n).
    \end{align*}
    Such a tangent vector now is also a tangent vector to $\Mnexm$ iff the twisted differential form on $\mathbb{P}^1_{k[\epsilon]}$
    \begin{align*}
        \prod_{k=0}^2(y-p_{n-k})^{m_{n-k}}\prod_{i=1}^{n-3} (y-(p_i+a_i\epsilon))^{m_i}\frac{dy}{dx}
         \end{align*}
   is exact. We can rewrite this differential form to be
   \begin{align*}
       \omega+\epsilon \left(\prod_{k=0}^2(y-p_{n-k})^{m_{n-k}}\prod_{\substack{l=1}}^{n-3}(y-p_l)^{m_l-1}\left(\sum_{\substack{i=1 \\ i \notin I_p}}^{n-3} a_i m'_i \prod_{\substack{j=1 \\ j \notin I_p \cup \{i \} }}^{n-3} (y-p_j)\right)\frac{dy}{dx}\right)
   \end{align*}
   As we can factor out $p$-th powers when checking exactness, exactness of this form is equivalent to
    \begin{align*}
       \mathfrak{tc}\left(\prod_{k=0}^2(y-p_{n-k})^{m'_{n-k}}\prod_{\substack{l=1 \\ l \notin I_p}}^{n-3}(y-p_l)^{m'_l-1}\left(\sum_{\substack{i=1 \\ i \notin I_p}}^{n-3} a_i m'_i \prod_{\substack{j=1 \\ j \notin I_p \cup \{i \} }}^{n-3} (y-p_j)\right)\frac{dy}{dx}\right) &=0.
    \end{align*}
    As the products $\prod_{\substack{j=1 \\ j \neq i }} ^n (y-p_j)$ are linearly independent, we have an epimorphism
    \begin{align*}
        \alpha: H^1(\Omega_{\Pjk}^\vee(-\sum_{i=1}^n[p_i] )) \to H^0(((F_{\Pjk})_*\Omega_{F,\Pjk})(-D))
    \end{align*}
    for
    \begin{align*}
    D&=\sum_{k=0}^2 m'_{n-k}[p_{n-k}]+\sum_{\substack{i=1 \\ i \notin I_p}}^{n-3}(m'_i-1)[p_i] - \left(\left(\sum_{i=1}^n m'_i\right)-2p+1\right)[\infty] \\
    &=\sum_{k=0}^2 m'_{n-k}[p_{n-k}]+\sum_{\substack{i=1 \\ i \notin I_p}}^{n-3}(m'_i-1)[p_i] + \left(p\left(\sum_{i=1}^n \left \lfloor \frac{m_i}{p} \right \rfloor\right)+1\right)[\infty]
    \end{align*}
    that maps
    \begin{align*}
        (a_1, \dots, a_{n-3}) \mapsto \prod_{k=0}^2(y-p_{n-k})^{m'_{n-k}}\prod_{\substack{l=1 \\ l \notin I_p}}^{n-3}(y-p_l)^{m'_l-1}\left(\sum_{\substack{i=1 \\ i \notin I_p}}^{n-3} a_i^p m'_i \prod_{\substack{j=1 \\ j \notin I_p \cup \{i \} }}^{n-3} (y-p_j)\right)\frac{dy}{dx}
    \end{align*}
    The kernel of $\alpha$ consists exactly of those deformations that only deform markings $p_i$ with $i \in I_p$. In particular,
    \begin{align*}
        dim(ker(\alpha))= \#I_p.
    \end{align*}
We have seen that a tangent vector $\nu \in H^1(\Omega^\vee_{\Pj}(-\sum_{i=1}^n p_i))$ is also a tangent vector to $\Mnexm$ iff the image of $\nu$ under $\alpha$ is in the kernel of the twisted Cartier operator on global sections:
\begin{align*}
    H^0(\mathfrak{tc}): H^0(((F_{\Pjk})_*\Omega_{F,\Pjk})(-D)) \to H^0(\mathcal{O}_{\Pjk}\left( -\sum_{i=1}^n\left \lfloor \frac{m_i}{p} \right \rfloor [\infty]\right))
\end{align*}
From Theorem~\ref{thm: rel_car_prop} we know that that $H^0(\mathfrak{tc})$ is surjective, so we can compute the kernel to be of dimension:
\begin{align*}
    \dim(\ker(H^0(\mathfrak{tc})))&=h^0((Frob_*\Omega_{F,\Pjk})(-D))-h^0\left(-\sum_{i=1}^n\left \lfloor \frac{m_i}{p} \right \rfloor [\infty]\right) \\
    &=h^0(\Omega_{F,\Pjk}(n-3-\#I_p-(2p-2)-1))- h^0\left(-\sum_{i=1}^n\left \lfloor \frac{m_i}{p} \right \rfloor [\infty]\right)\\ 
    &=h^0(\mathcal{O}_{\Pjk}(n-4-\#I_p)) - h^0\left(-\sum_{i=1}^n\left \lfloor \frac{m_i}{p} \right \rfloor [\infty]\right) \\
    &= n-3 -\#I_p -\left(\sum_{i=1}^n\left(-\left \lfloor \frac{m_i}{p} \right \rfloor \right)+1\right) \\ 
    &=n-4- \#I_p+\sum_{i=1}^n \left \lfloor \frac{m_i}{p} \right \rfloor
\end{align*}
We have identified the tangent space $T_{(\Pjk,p_1, \dots,p_n)} \Mnexm$ to be the kernel of the composition
 \begin{center}
    \begin{tikzcd}
        H^1(\Omega_{\Pjk}^\vee(-\sum_{i=1}^n p_i) \arrow[r, " H^0(\rc) \circ \alpha"]  &  H^0(\mathcal{O}_{\Pjk}( -\sum_{i=1}^n\left \lfloor \frac{m_i}{p} \right \rfloor [\infty]))
    \end{tikzcd}
    \end{center}
    and have seen that $\alpha$ is surjective, so we can compute
    \begin{align*}
        \dim(T_{(\Pjk,p_1, \dots,p_n)} \Mnexm) &= \dim(\ker(H^0(\mathfrak{tc}) \circ \alpha)) \\&= \dim(\ker(\alpha))+ \dim(\ker(H^0(\mathfrak{tc})) = n-4+\sum_{i=1}^n \left \lfloor \frac{m_i}{p} \right \rfloor.
    \end{align*}
    As this is independent of $(\Pjk, p_1, \dots, p_n)$ we see that $\Mnexm$ is smooth of the desired dimension if it is non-empty.
    \end{proof}

    \begin{remark} \label{rem: tan-sur}
        The proof implies in particular that $H^0(\mathfrak{tc}) \circ \alpha$ is surjective, this will be helpful when we consider quasi-exact forms.
    \end{remark}
\subsection{The Moduli Space of Quasi-Exact Differential Forms \texorpdfstring{$\Mnquexm$}{M0nquexm}}
Again, we fix an integer partition $\mathbf{m}=(m_1, \dots, m_n)$ of $2p-2$ of length $n$.
\begin{definition}
The Moduli space $\Mnquexm \subseteq \Mn$ parameterizes those configurations of the markings $(p_1, \dots, p_n)$ on the projective line such that there is a quasi-exact meromorphic section $\omega$ of the sheaf $\Omega_{{\PjS/S}}^{(1-p)}$ with zeroes and poles according to $\mathbf{m}$.
\end{definition}

\begin{theorem}
    The Moduli space $\Mnquexm$ is smooth over $\mathbb{F}_p$ and has dimension
    \begin{align*}
        dim(\Mnquexm) = n-3+\sum_{i=1}^n \left \lfloor{\frac{m_i}{p}} \right \rfloor
    \end{align*}
    or is empty.
\end{theorem}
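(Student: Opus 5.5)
I will mirror the proof of Theorem~\ref{thm:exc_smooth}. The only change is that the vanishing condition on the twisted Cartier operator is replaced by the condition from Theorem~\ref{thm: rel_car_prop}(5): $\rc(\omega)$ is a nonzero constant, i.e.\ a unit coming from the base. Concretely, with $\omega=\prod_i (y-p_i)^{m_i}\frac{dy}{dx}$ and all markings away from $\infty$, quasi-exactness means
\begin{align*}
\rc\left(\prod_{i=1}^n (y-p_i)^{m'_i}\frac{dy}{dx}\right)\cdot\prod_{i}(y-p_i)^{\lfloor m_i/p\rfloor}\in k^\times .
\end{align*}
Equivalently, $\rc$ of the reduced form equals $c\prod_i (y-p_i)^{-\lfloor m_i/p\rfloor}$ for some $c\in k^\times$.

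\textbf{Closedness and lower bound.} On $\Mn$, consider the map of vector bundles given by $H^0(\rc)$ composed with projection onto the quotient of $H^0\bigl(\mathcal{O}(\sum_i \lceil m_i/p\rceil p_i)\bigr)$ by the line spanned by the section with the prescribed divisor. The locus $\Mnquexm$ is where $\rc(\omega)$ lies in that line and is nonzero. This makes it a locally closed degeneracy-type locus cut out by one fewer condition than in the exact case. Consequently the expected dimension, and hence the lower bound from \cite{geertsen2002degeneracy}, is one more than for $\Mnexm$, namely $n-3+\sum_i\lfloor m_i/p\rfloor$. The open condition $c\neq 0$ does not affect dimension.

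\textbf{Tangent space.} Deform the markings as in the exact case by $p_i\mapsto p_i+a_i\epsilon$, and allow the constant to deform, $c\mapsto c+b\epsilon$. The first-order condition is that the class $H^0(\rc)\circ\alpha(a_1,\dots,a_{n-3})$ equals the derivative of $c\prod_i (y-p_i)^{-\lfloor m_i/p\rfloor}$. That derivative is $b$ times the fixed section plus terms linear in the $a_i$ coming from moving the pole orders. Hence the tangent space is the kernel of the map
\begin{align*}
H^1\bigl(\Omega_{\Pjk}^\vee(-\textstyle\sum_i p_i)\bigr)\oplus k \to H^0\bigl(\mathcal{O}_{\Pjk}(\textstyle\sum_i\lfloor m_i/p\rfloor\cdots)\bigr),
\end{align*}
whose first component is $H^0(\rc)\circ\alpha$ corrected by a linear term. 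By Remark~\ref{rem: tan-sur}, $H^0(\rc)\circ\alpha$ is surjective onto the target used in Theorem~\ref{thm:exc_smooth}. The correction term is linear in $a$ and lands in a space of the same form.

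\textbf{Main obstacle.} The hard step is to show that the combined map remains surjective after adding the correction term and the extra $b$-direction, and to pin down its target so that the dimension count works. I would first normalize the constant $c$ to $1$, using that $\omega$ is only defined up to scaling by a unit. This makes the target the same space $H^0(\mathcal{O}(-\sum\lfloor m_i/p\rfloor[\infty]))$ as before, but now the condition is the inhomogeneous equation $H^0(\rc)(\cdot)=\text{fixed section}$ rather than the homogeneous one. Its solution set is therefore an affine space over the kernel computed in the exact case. The $\G_m$-scaling then contributes the single extra dimension. Granting surjectivity, the tangent dimension is
\begin{align*}
\dim\ker(\alpha)+\dim\ker(H^0(\rc))+1 = n-3+\sum_i\lfloor m_i/p\rfloor.
\end{align*}
This is independent of the point and matches the lower bound, so $\Mnquexm$ is smooth of that dimension whenever it is nonempty.
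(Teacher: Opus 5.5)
Your strategy is the paper's: you realize $T\Mnquexm$ inside $T\Mn\cong k^{n-3}$ as the preimage under $H^0(\rc)\circ\alpha$ of the line spanned by the fixed section, get surjectivity from Remark~\ref{rem: tan-sur}, and compare with the degeneracy-locus lower bound. However, the step you single out as the main obstacle is left as ``granting surjectivity'', so the argument is incomplete as written. The missing point is that the correction term you worry about is identically zero.

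Over $k[\epsilon]$ in characteristic $p$ one has $(y-p_i-a_i\epsilon)^p=y^p-p_i^p=x-p_i^p$. Hence the factor $\prod_i(y-p_i-a_i\epsilon)^{p\lfloor m_i/p\rfloor}=\prod_i(x-p_i^p)^{\lfloor m_i/p\rfloor}$ is a function on the target that does not depend on $\epsilon$ at all, and $\rc$ is linear over such functions (Theorem~\ref{thm: rel_car_prop}(3)). So moving the markings does not move the prescribed zero/pole factor. All first-order dependence sits in the reduced form $\prod_i(y-p_i-a_i\epsilon)^{m'_i}\frac{dy}{dx}$, whose $\epsilon$-coefficient is, up to sign, the section $\alpha(a)$ from the proof of Theorem~\ref{thm:exc_smooth}. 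The linearized condition is therefore exactly $H^0(\rc)\circ\alpha(a)\in k\cdot s$, where $s=\prod_i(y-p_i)^{-\lfloor m_i/p\rfloor}$ is the fixed section. Equivalently, the tangent space is the kernel of $H^0(\rc)\circ\alpha$ followed by the projection to the quotient by the line $k\cdot s$, which is how the paper phrases it. No surjectivity statement beyond Remark~\ref{rem: tan-sur} is needed.

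Your normalization $c=1$ plus $\G_m$-scaling is just a reparametrization of the $b$-direction. The count $\dim\ker(\alpha)+\dim\ker(H^0(\rc))+1$ is right, but it should be justified linearly: $T\Mnquexm=(H^0(\rc)\circ\alpha)^{-1}(k\cdot s)$. This has dimension $\dim\ker(H^0(\rc)\circ\alpha)+1$ precisely because $s$ lies in the image. Without that, your ``affine space over the kernel'' could be empty and the scaling would contribute nothing; this is where Remark~\ref{rem: tan-sur} is genuinely used. With it, the tangent dimension is $n-3+\sum_i\lfloor m_i/p\rfloor$ at every point, and together with the lower bound you get smoothness as you say.
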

 \begin{proof}
     Again, by Theorem~\ref{thm: rel_car_prop}, $\Mnquexm$ is a closed substack of $\mathcal{M}_{0,n}$. The techniques in this proof are very similar to those that we used in the corresponding statement for exact forms. 
     
     By considering the ranks of the vector bundle whose degeneracy locus in $\Mnquexm$, we get
      \begin{align*}
        dim(\Mnexm) \geq n-3+\sum_{i=1}^n \left \lfloor{\frac{m_i}{p}} \right \rfloor
    \end{align*}Again, we can assume that all markings are away from $\infty$. Let $(\Pjk,p_1, \dots, p_n)$ be a $k$-point of $\Mnquexm$ for $k$ an algebraic closure of $\mathbb{F}_p$. This means that
     \begin{align*}
        \mathfrak{tc} \left(\prod_{i=1}^n (y-p_i)^{m_i}\frac{dy}{dx} \right) \in k^\times.
     \end{align*}
    We define
     \begin{align*}
         G=\prod_{i=1}^n (y-p_i)^{\left \lfloor{\frac{m_i}{p} } \right \rfloor} 
     \end{align*}
     and again
    \begin{align*}
        m'_i& \coloneqq m_i-p\left \lfloor \frac{m_i}{p} \right \rfloor
    \end{align*}
    one sees that the condition is equivalent to 
    \begin{align*}
        \mathfrak{tc}\left(\prod_{i=1}^n(y-p_i)^{m_i'}\frac{dy}{dx}\right)  = cG
    \end{align*}
    for a $c \in k^\times$. We again identify $ H^1(\Omega_{\Pjk}^\vee(-\sum_{i=1}^n p_i))$ with $k^{n-3}$ and define the set $I_p$, the divisor $D$ and the maps $\alpha$ and $H^0(\mathfrak{c})$ as in the proof of Theorem~\ref{thm:exc_smooth}. By the same techniques as in this proof, a tangent vector $\nu \in T_{(\Pjk, p_1, \dots, p_n)} \Mn$ is a tangent vector to $\Mnquexm$ iff its image under $ H^0(\mathfrak{tc}) \circ \alpha$ is in the $k$-vector space generated by $G$. So we can identify the tangent space $T_{(\Pjk, p_1, \dots, p_n)} \Mnquexm$ with the kernel of the map
    \begin{align*}
        \pi \circ H^0(\mathfrak{tc}) \circ \alpha: H^1(\Omega_{\Pjk}^\vee(-\sum_{i=1}^n p_i)) \to H^0\left(\mathcal{O}_{\Pjk}\left( -\sum_{i=1}^n\left \lfloor \frac{m_i}{p} \right \rfloor [\infty]\right)\right)/  G \Gamma(\mathcal{O}_{\Pjk}).
    \end{align*}
    where $\pi$ denotes the projection.
    From Remark~\ref{rem: tan-sur} we know that $H^0(\mathfrak{c}) \circ \alpha$ is surjective, so also $\pi \circ H^0(\mathfrak{tc}) \circ \alpha$ is surjective and we can compute
    \begin{align*}
        \dim(T_{(\Pjk,p_1, \dots,p_n)} \Mnquexm) &= n-3-\left(\left(-\sum_{i=1}^n \left \lfloor{\frac{m_i}{p}} \right \rfloor +1\right)-1\right) = n-3+\sum_{i=1}^n \left \lfloor{\frac{m_i}{p}} \right \rfloor.
    \end{align*}
    So the tangent space of $\Mnquexm$ is at every point of the same dimension, so $\Mnquexm$ is smooth and of the dimension of its tangent space or empty.
 \end{proof}
 
\section{Logarithmic Smoothness of \texorpdfstring{$\LHURphgnZ$}{LHZA} and \texorpdfstring{$\LHURphgnF$}{LHFAXi} for \texorpdfstring{$p=2$}{p=2} and \texorpdfstring{$g=0$}{g=0}} \label{sec:log_smooth}
The objective of this section is to prove our main theorem. Recall that $A$ abbreviates the data $(h,g,N,\Lambda)$.
\begin{theorem} \label{thm:log_hur_log_smooth}
    For $p=2$ and $g=0$, the Hurwitz spaces $\LHURphgnZ$ and $\LHURphgnF$ are log smooth over $\Z_p$ with logarithmic structure coming from $\N$ that maps $1$ to $p$ and $\mathbb{F}_p$ with trivial logarithmic structure respectively.
\end{theorem}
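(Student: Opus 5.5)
The plan is to prove log smoothness étale locally, point by point. On the generic fiber this is Theorem~\ref{thm: gen_fibre_smooth}, so only points $r$ of characteristic $2$ need treatment. Fix such a point, represented by $(f\colon C\to D,\ell)$ over an algebraically closed field $k$, and equip its base with the minimal log structure. By Lemma~\ref{cor:min_log_stru_cov}, for $p=2$ this log structure is free, of rank $\#E_D^{hor}+\#V_C^{ex}+1$ in the mixed case and $\#E_D^{hor}+\#V_C^{ex}$ in the equicharacteristic case. The aim is to exhibit a strict étale neighbourhood of $r$ that is isomorphic, strictly, to a smooth scheme $Y$ times the log scheme $\mathbb{A}[\N^{a}]$, relative to $\mathbb{A}[\N]\to \mathbb{A}[\N]$ given by $1\mapsto(\text{the generator } \log 2)$ in the mixed case. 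Concretely, I want a chart in which $2$ maps to the product of the level parameters $s_j$ lying below the quasi-exact level. Such a map is log smooth, because $\N\to\N^{a}$ with $1\mapsto e_{i_1}+\dots$ is injective with torsion-free cokernel, and $Y$ is smooth.

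First I construct the neighbourhood. Deformations that preserve the combinatorial type (the enhanced level graph of $C$, the dual graph of $D$, and the slopes) are computed stratum by stratum. Since $g=0$ and $p=2$, every component of $C$ has one of two forms. At the maximal level it is a separable degree $2$ cover of a rational curve, which is an Artin--Schreier cover or, in the generic/mixed direction, a tame double cover. At a negative level it is the relative Frobenius $\Pj\to\Pj$ together with a bivariant form $\psi_i$, which is exact or quasi-exact. Exactness or quasi-exactness is determined by whether $s_i=2$. I will show that the stratum is, étale locally, a product of spaces $\AScovhen$ (Theorem~\ref{thm: as_smooth}), spaces $\Mnexm$ (Theorem~\ref{thm:exc_smooth}) and spaces $\Mnquexm$ (the quasi-exact theorem), one for each vertex, fibered over the choice of node positions on the target components. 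The conductors and partitions are read off from $\Xi$, $\ell$ and the slopes, using Remark~\ref{rem:con_pole}. Each factor is smooth, so the stratum is smooth. Here $g=0$ is used so that all target components are $\Pj$, which is the only case the Cartier computations of Section~\ref{sec:rel_obj} cover.

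Second, I add the smoothing directions. For each horizontal edge of $D$ (the image of a slope $0$ edge) there is one smoothing parameter, and there is one parameter $s_i$ for each negative level. I need to show that these directions can be prescribed freely and independently, and that deforming them is unobstructed in the transverse directions. For horizontal nodes, local smoothings $uv=t$ of $D$ lift uniquely because the cover is étale there, and for $p=2$ the two branches are identified. For level parameters, the rescaling ensemble of Lemma~\ref{lem: rescaling_ensamble} prescribes $\delta_e^{\kappa_e}=s_j\cdots s_{i-1}$. I then have to check that, given $s_i$, one can deform the local equations $x\mapsto y^2+s_i h$ near the level-$i$ component compatibly with the form $\psi_i$. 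This should follow from plumbing: one glues local models of the shape in the proof of the lifting theorem, namely $\tau_f=(dx)^\vee\otimes(2y\,dy+s_i\,dh)$, so that $s_i$ is a free coordinate. Finally I compare dimensions. The stratum dimension plus the rank of the minimal monoid should equal $3g-3+N=N-3$ (minus one in the mixed case after accounting for the base), which agrees with Theorem~\ref{thm:equ-dim}. A dimension count together with smoothness of the stratum and the toric form of the chart then gives log smoothness via Kato's criterion, as in Theorem~\ref{thm: gen_fibre_smooth}.

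I expect the main obstacle to be the second step: showing that the smoothing parameters impose no hidden relations with the stratum coordinates. The exactness condition on a level might a priori be deformed only along a proper subvariety once the $s_i$ become nonzero, and the global residue-type compatibility between levels could cut the dimension down. My first attempt is the explicit dimension match. If the stratum dimensions from Theorems~\ref{thm: as_smooth} and~\ref{thm:exc_smooth} plus the monoid rank already give $N-3$, and Theorem~\ref{thm:equ-dim} gives equidimensionality, then the closed immersion of the neighbourhood into the smooth toric model $Y\times\mathbb{A}[\N^a]$ is an isomorphism, because the model is integral of the same dimension.
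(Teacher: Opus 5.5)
Your outline follows the paper: a chart from the free minimal monoid, the log stratum as a product of $\AScovhen$-, $\Mnexm$- and $\Mnquexm$-type factors (the first half of Lemma~\ref{lem: dim_calc}), a dimension count, and equidimensionality. Your normalisation of the chart ($2\mapsto$ the product of the level-gap parameters, instead of the paper's $p\mapsto\rho$) does not matter. Either way the strict chart map $\gamma\colon U\to B$ has a regular base whose local dimension at the origin is the rank $a$ of the minimal monoid, and condition (ii) of Kato's criterion holds.

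The gap is the passage from the stratum to the neighbourhood. The plumbing is not carried out. Even if it were, it would give a map \emph{from} a model \emph{to} $U$. Your fallback instead uses a closed immersion of $U$ \emph{into} $Y\times\mathbb{A}[\N^a]$, and nothing in the proposal constructs it; there is no natural retraction of $U$ onto its stratum. The missing idea, which is how the paper finishes, is that no model is needed:
\begin{enumerate}
\item Present $U$ near the given point $x$ as cut out by $g_1,\dots,g_v$ in $\mathbb{A}^m_B$.
\item The scheme-theoretic fibre $U_0=\gamma^{-1}(0)$ is the log stratum, smooth at $x$ of some dimension $d_0$ by your first step. Hence some $(m-d_0)$-minor of $(\partial g_i/\partial s_j)$ does not vanish at $x$.
\item The corresponding $m-d_0$ equations cut out $V\supseteq U$, smooth over $B$ at $x$ of relative dimension $d_0$. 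So $\mathcal{O}_{V,x}$ is a regular local ring of dimension $d_0+a$.
\item If $\dim\mathcal{O}_{U,x}\geq d_0+a$, the ideal of $U$ in the domain $\mathcal{O}_{V,x}$ is zero. So $U=V$ near $x$ and $\gamma$ is smooth there.
\end{enumerate}
This also rules out the hidden relations you worry about, since any such relation would force $\dim\mathcal{O}_{U,x}<d_0+a$.

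Everything therefore rests on the identity $d_0+a=\dim\mathcal{O}_{U,x}$, which you state only conditionally (``should equal''). This is the second half of Lemma~\ref{lem: dim_calc}, and it is a real computation. You sum the dimensions from Theorem~\ref{thm: as_smooth}, Theorem~\ref{thm:exc_smooth} and the corresponding theorem for $\Mnquexm$ over the vertices, converting conductors into slopes via Remark~\ref{rem:con_pole}. You then cancel the top-level conductor terms against the pole terms on the exact levels using $\lceil \ell/p\rceil+\lfloor \ell/(p(p-1))\rfloor=\ell/(p-1)$. Finally you apply Riemann--Hurwitz to the Artin--Schreier components and $\#E_D-\#V_D+1=0$ to the genus-$0$ target. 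In the mixed case this gives $d_0=N-3-\#V_C^{ex}-\#E_D^{hor}$.

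The count must also include top-level components over which $f$ is split: two disjoint copies of $\Pj$, contributing $\#H(D_i)-3$. At a point of characteristic $2$ these are not objects of $\AScovhen$, and no tame double covers occur there, so your list of vertex types is incomplete.

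In the equicharacteristic case, both your dimension target and your source of equidimensionality are wrong. Theorem~\ref{thm:equ-dim} concerns only $\LHURphgnZ$. Moreover, $\LHURphgnF$ has dimension $N-3+\sum_{\xi_i\in\Xi}\frac{\xi_i+1}{2}$, the dimension of its generic stratum $\mathcal{AS}cov_h^{\vec{e},N_0}$, not $N-3$. There, the lower bound on $\dim\mathcal{O}_{U,x}$ has to come from the lifting part of the paper, which places every point in the closure of this smooth, equidimensional locus.
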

\begin{remark}
The reason why we restrict this statement to $p=2$ is that in characteristic $2$ every separable cover is Galois, so given by an Artin--Schreier equation. We do not have yet an analogous result to Theorem~\ref{thm: as_smooth} for separable but non-normal covers that we would need to extend our proof also to odd primes. In the future, it would be interesting to investigate these Moduli spaces to extend our results to larger primes. Additionally, one can also consider the case of Galois covers by extending the reduction and lifting result from \cite{brezner2017lifting} to generalize our findings. As in these settings Lemma~\ref{lem: dim_calc} will be relevant for general $p$ and the proof is not more complicated, we will formulate it the general form.
\end{remark}
We will use the chart criteria for logarithmic smoothness from \citep[Theorem 3.5.]{kato1989logarithmic}:
\begin{theorem}
    Let $g: X \to Y$ be a morphism of fine logarithmic schemes and $x$ be a geometric point of the underlying scheme of $X$. Then $g$ is log smooth at $x$ iff there are étale neighbourhoods with charts $U \to \A_P$ of $x$, $V \to \A_Q$ of $g(x)$ and a morphism $\varphi: Q \to P$ such that 
   \begin{center}
    \begin{tikzcd}
        U \arrow[d, "g"] \arrow[r] & \A_P \arrow[d, "\A_\varphi"] \\
        V \arrow [r] & \A_Q
    \end{tikzcd}
    \end{center}
    commutes and
    \begin{enumerate}
        \item The base change $U \to V \times_{\A_Q} \A_P$ is smooth as a morphism of schemes.
        \item $\ker(\varphi^{gp})$ and $\text{coker}(\varphi^{gp})_{tor}$ are finite of order invertible in $k(x)$.
    \end{enumerate}
    \end{theorem}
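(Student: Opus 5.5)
Since this is the chart criterion of \citep[Theorem 3.5.]{kato1989logarithmic}, the paper can simply cite it; if I were to prove it, I would treat the two directions separately, both through the infinitesimal lifting characterization of log smoothness and the sheaf of log differentials $\Omega^{log}_{X/Y}$.

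\emph{Sufficiency.} First show that $\A_\varphi:\A_P\to\A_Q$, base changed to $\Z[1/n]$ with $n$ the orders of $\ker(\varphi^{gp})$ and $\text{coker}(\varphi^{gp})_{tor}$, is log smooth. Take a strict square-zero thickening $T_0\subseteq T$ with ideal $I$, together with a map $T_0\to\A_P$ over $T\to\A_Q$. A lift amounts to extending a monoid map $P\to M_{T_0}$ to a map $P\to M_T$ compatible with $Q\to M_T$. The obstructions and choices live in $\mathcal{E}xt^i(\text{coker}(\varphi^{gp}),1+I)$ for $i=0,1$, and $\ker(\varphi^{gp})$ imposes compatibility conditions.
\begin{itemize}
\item Since $1+I\cong I$ is a module on which $n$ is invertible, the finite torsion part of the cokernel contributes no obstruction.
\item The free part of the cokernel is unobstructed.
\item The finite kernel causes no problem for the same reason.
\end{itemize}
Next, $V\times_{\A_Q}\A_P\to V$ is log smooth as a strict base change. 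Finally $U\to V\times_{\A_Q}\A_P$ is a smooth morphism of schemes that is strict, so it is log smooth. Composing, $g$ is log smooth at $x$.

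\emph{Necessity.} Fix a chart $Q\to M_V$ near $g(x)$. By log smoothness, $\Omega^{log}_{X/Y}$ is locally free near $x$ of rank $r$, and it is generated by $d\mathcal{O}_X$ and $d\log M_X^{gp}$.
\begin{enumerate}
\item Choose $t_1,\dots,t_s\in M^{gp}_{X,\bar x}$ whose images generate $\overline{M}^{gp}_{X,\bar x}/\mathrm{im}(Q^{gp})$ up to torsion of order invertible in $k(x)$. This is possible because $\overline M^{gp}$ is finitely generated; the point is to kill only the $p$-part appropriately, using that $p$-torsion in the relative characteristic obstructs formal smoothness.
\item Set $P^{gp}=Q^{gp}\oplus\Z^s$, mapping to $M^{gp}_{X,\bar x}$ by the given map on $Q$ and $e_i\mapsto t_i$.
\item Let $P$ be the preimage of $M_{X,\bar x}$. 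This gives a chart $U\to\A_P$ after shrinking étale locally, compatible with $\varphi:Q\to P$, and condition (2) holds by construction.
\end{enumerate}

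It remains to show that $h:U\to V\times_{\A_Q}\A_P$ is smooth. By construction $h$ is strict, and $h^*\Omega^{log}\to\Omega^{log}_{X/Y}$ is surjective near $x$. Complete the $d\log t_i$ by $df_j$ to a basis and add the coordinates $f_j$, which makes $h$ unramified-plus-smooth. Strict log smoothness of $h$, which follows from log smoothness of $U$ and of the target over $V$ together with the injectivity of $h^*\Omega^{log}$ checked on fibers, then gives ordinary smoothness of $h$.

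The main obstacle is this necessity step: choosing the $t_i$ so that the torsion of the cokernel is prime to $p$ while still making $h^*\Omega^{log}\to\Omega^{log}_{X/Y}$ injective at $x$. I would first try to choose the $t_i$ so that the $d\log t_i$ restrict to a basis of the image of $M^{gp}\otimes k(x)$ in $\Omega^{log}_{X/Y}\otimes k(x)$.
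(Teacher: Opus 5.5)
The paper gives no proof of this statement. It quotes \citep[Theorem 3.5.]{kato1989logarithmic} and only uses the ``if'' direction, in Section~\ref{sec:log_smooth}, to get log smoothness from the chart and the smoothness of $\gamma$.

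Your sufficiency half is Kato's own argument and is fine at this level of detail. Two details should be added. For a strict thickening one has $M_T=M_T^{gp}\times_{M_{T_0}^{gp}}M_{T_0}$, so a group-level lift automatically sends $P$ into $M_T$. And you should work on affine $T$, so that $H^1(T,I)=0$.

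\textbf{The gap in necessity.} Steps 1--3 have a genuine gap that goes beyond the one you flag at the end. With $P^{gp}=Q^{gp}\oplus\Z^s$, the map $P\to M_{X,\bar x}$ is a chart only if $P^{gp}\to\overline M^{gp}_{X,\bar x}$ is surjective. So generating $B=\overline M^{gp}_{X,\bar x}/\mathrm{im}(Q^{gp})$ only ``up to torsion'' is not enough. On the other hand, generating $B$ fully can destroy smoothness of $h$.

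The Kummer map $\A_{\N}\to\A_{\N}$, $t\mapsto t^\ell$, over a field in which $\ell$ is invertible, shows both failures. It is log \'etale, with $\Omega^{log}_{X/Y}=0$ and $B=\Z/\ell$.
\begin{itemize}
\item For $s=0$, your recipe gives $P=\N$ mapping to $\overline M_{X,\bar x}=\N$ by multiplication by $\ell$, which is not a chart.
\item For $s\geq 1$, the scheme $V\times_{\A_Q}\A_P$ has dimension $1+s$ while $U$ has dimension $1$, so $h$ cannot be smooth.
\end{itemize}
A correct chart there is $\varphi=\ell\colon\N\to\N$, with $\mathrm{coker}(\varphi^{gp})=\Z/\ell$. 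The torsion allowed in condition (2) is produced by the construction; it is not something to be killed.

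Two further claims also fail. The map $h^*\Omega^{log}\to\Omega^{log}_{X/Y}$ need not be surjective: take $\A^1$ with trivial log structure over a point. And completing by $df_j$ requires the $d\log t_i$ to be independent at $x$, which Step 1 does not guarantee.

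\textbf{The missing ideas.}

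First, choose the $t_i$ via log differentials. In the strictly henselian ring $\mathcal O_{X,\bar x}$ one has $da=(1+a)\,d\log(1+a)$ for $a\in\mathfrak m_{\bar x}$. Hence $d\log M^{gp}_{X,\bar x}$ already spans $\Omega^{log}_{X/Y,\bar x}$, and you can choose $t_1,\dots,t_r$ with $d\log t_i$ a basis of $k(x)\otimes\Omega^{log}_{X/Y}$. This is your ``first try'', and no $df_j$ are needed. The assignment $d\log m\mapsto 1\otimes\bar m$, $da\mapsto 0$ defines a surjection $k(x)\otimes\Omega^{log}_{X/Y}\to k(x)\otimes_{\Z}B$. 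Therefore $B/\langle t_i\rangle$ is finite of some order $n$ invertible in $k(x)$. This is where the invertibility comes from; it is not a choice.

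Second, enlarge to a finite-index overgroup. Embed $H=Q^{gp}\oplus\Z^r$ with finite index into a group $G$ surjecting onto $\overline M^{gp}_{X,\bar x}$, for instance by adjoining formal $n$-th roots of preimages of generators. Then extend $H\to M^{gp}_{X,\bar x}$ to $G$ compatibly with $G\to\overline M^{gp}_{X,\bar x}$. The obstruction lies in $\mathrm{Ext}^1(G/H,\mathcal O^\times_{X,\bar x})$, which vanishes because $\mathcal O^\times_{X,\bar x}$ is $n$-divisible. Taking $P$ to be the preimage of $M_{X,\bar x}$ in $G$ gives a chart with $\ker(\varphi^{gp})=0$ and $\mathrm{coker}(\varphi^{gp})_{tor}$ of order dividing a power of $n$.

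Third, compare differentials. With $W=V\times_{\A_Q}\A_P$, one has $h^*\Omega^{log}_{W/V}\cong\mathcal O_U\otimes_{\Z}(G/Q^{gp})$. This is free of rank $r$, with basis mapping to the $d\log t_i$. So $h^*\Omega^{log}_{W/V}\to\Omega^{log}_{U/V}$ is an isomorphism near $x$. With that, your final step goes through: the differential criterion gives that $h$ is log \'etale, and strict plus log \'etale implies \'etale.
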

We will give the proof for $\LHURphgnZ$, the proof for $\LHURphgnF$ is precisely the same dimension counting argument that will be even simpler, as no quasi-exact forms can appear.
The chart of the log structure of $\Z_{(p)}$ is given by 
\begin{align*}
    \Z[ \N log(p)] \to \Z_{(p)}
\end{align*}
where we consider $p$ to be the uniformizer of $\Z_{(p)}$.

 First, note that whenever $x \in \LHURphgnZ$ is a point over the generic fiber, we simply can take $U$ to to be the whole generic fiber of $\LHURphgnZ$ which we know from Theorem~\ref{thm: gen_fibre_smooth} to be log smooth.

So let $ x \in \LHURphgnZ$ be a point in the special fiber. We have seen in Corollary~\ref{cor:min_log_stru_cov} that after choosing a suitable étale neighborhood $U$ around $x$ the chart at $x$ is given by 
\begin{align*}
    U \to Spec(\Z[\N log(\rho) \oplus\N^{\#V_C^{ex}+\#E_D^{hor}}])
\end{align*}
where $\#V_C^{ex}$ denotes the number of components with an exact differential form of the source curve and $\#E_D^{hor}$ denotes the number of horizontal edges of the target curve $D$, so the cardinality of the image of the horizontal edges of slope $0$ of $C$. $g^*$ maps $p$ to $\rho$ that is the parameter corresponding to the minimal level. This holds because to the minimal level the parameter $log(p)$ was assigned. So the conditions on kernel and cokernel of $\varphi^{gp}$ are trivial and we are left to show that
\begin{align*}
    \gamma: U \to Spec(\Z_{(p)} \otimes _{\Z[\N log(p)]} \Z[\N log(\rho) \oplus\N^{\#V_C^{ex}+\#E_D^{hor}}]) = Spec(\Z_{(p)}[\N^{\#V_C^{ex}+\#E_D^{hor}}])
\end{align*}
is smooth.

Our main ingredient will be the following lemma.
\begin{lemma} \label{lem: dim_calc}
    The fiber $U_0$ of $\gamma: U \to Spec(\mathbb{Z}_{(p)}[\N^{\#V_C^{ex}+\#E_D^{hor}}])$ over the origin, determined by the vanishing of all coordinates and $p$, is smooth of dimension 
    \begin{align*}
        \dim(U_0)=N-3-(\#V_C^{ex}+\#E_D^{hor}).
    \end{align*}
\end{lemma}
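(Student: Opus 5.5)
The fiber $U_0$ is the locus where every log parameter of the chart vanishes: all negative levels stay separated, all horizontal nodes of $D$ (and hence of $C$) stay unsmoothed, and we are in characteristic $2$. So $U_0$ is the equisingular, equi-level deformation locus of the object $x=(f:C\to D,\ell)$. I would therefore describe $U_0$ as (an étale neighbourhood in) a product of moduli spaces attached to the irreducible components of $C$ and $D$. Then smoothness and the dimension count follow from the results of Section~\ref{sec:rel_obj}.

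\textbf{Step 1: decompose $U_0$.} Since $g=0$, the target $D$ is a tree of $\Pj$'s. Over each component $D_v$ of $D$ lies either one component $C_w$ mapping by the relative Frobenius (level $<0$, inseparable), or one component mapping separably (maximal level), or two components mapping isomorphically. The last case is an étale double cover, necessarily split over a $\Pj$.

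Over $U_0$ the level structure and the combinatorial type of the graphs are fixed, so a deformation is given componentwise:
\begin{enumerate}
\item For separable components, the deformation is a point of $\mathcal{AS}cov_{h_w}^{\vec{e},n}$. Here the conductors are read off from $\tau_f$ via Remark~\ref{rem:con_pole}, and the special points of $D_v$ not under ramification serve as the $n$ markings.
\item For Frobenius components at a level with $s_i\neq 2$, the deformation is a point of $\mathcal{M}_{0,n_v}^{ex,\mathbf{m}_v}$. The partition $\mathbf{m}_v$ is given by the orders of $\psi_i$ at the special points, and these orders sum to $2p-2=2$ on $\Pj$.
\item For Frobenius components at the level with $s_i=2$, the deformation is a point of $\mathcal{M}_{0,n_v}^{qu-ex,\mathbf{m}_v}$.
\item For split étale components, the deformation is a point of $\mathcal{M}_{0,n_v}$.
\end{enumerate}
Compatibility at nodes is automatic: the image points in $D$ determine the nodes, and $\psi_i$ is determined up to the scaling $s_i$. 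The key claim is that the natural map from $U_0$ to the product of these spaces is étale, or at least an isomorphism on tangent spaces. I would prove this with the deformation theory of the cover: the positive-characteristic data $(f,\ell,\psi_i)$ is determined, up to the unit rescalings already absorbed, by the component data.

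\textbf{Step 2: count dimensions.} By Theorem~\ref{thm: as_smooth} and the two theorems on exact and quasi-exact forms, each factor is smooth, so $U_0$ is smooth. For $p=2$, $\lfloor m/2\rfloor$ is nonzero only for poles or zeros of order at least $2$. I would now sum the dimensions:
\begin{itemize}
\item each exact component contributes $n_v-4+\sum\lfloor m_i/2\rfloor$;
\item each quasi-exact component contributes one more than that, namely $n_v-3+\sum\lfloor m_i/2\rfloor$;
\item each Artin--Schreier component contributes $n_v+m_v-3+\sum(e_i-1-\lfloor (e_i-1)/2\rfloor)$;
\item each split component contributes $n_v-3$.
\end{itemize}
Summing $n_v-3$ over the vertices of the tree $D$ gives $N-3-\#E_D$. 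The edges of $D$ split into horizontal edges and vertical ones (images of edges of nonzero slope). The exact components each carry an extra $-1$, which accounts for $-\#V_C^{ex}$, and $-\#E_D^{hor}$ appears directly. It remains to show that, for each vertical edge, the surplus terms $\sum\lfloor m_i/2\rfloor$ and the Artin--Schreier excess $\sum(e_i-1-\lfloor(e_i-1)/2\rfloor)$ at its two ends together contribute exactly $+1$. I would check this via the local slope matching: at a node of slope $\kappa$, the orders of $\psi$ on the two sides sum to $-2$ after the logarithmic convention (the residue-type balancing of $\ell$). This is a case-by-case verification for $p=2$, using that the only possible slopes are small and that the conductor $e$ at a separable node matches the pole order $m$ of the adjacent exact form.

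\textbf{Main obstacle.} I expect the main obstacle to be Step 1, showing that the comparison map is étale. One has to show that deforming the component data along with the fixed level data lifts uniquely to a deformation of $(f,\ell)$ in $U_0$, including at nodes joining a separable component to an inseparable one. I would attack this by gluing the local charts of Definition~\ref{def:exact_quexact}: near a node, $f$ is given by $x\mapsto y^2+s\,h$, and the data $h$ on either side is exactly what the component moduli spaces record. Then I would argue on tangent spaces, using the surjectivity from Remark~\ref{rem: tan-sur} to ensure that the node conditions impose independent constraints.
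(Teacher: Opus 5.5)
Your overall route is the paper's. You identify the log stratum $U_0$, up to a finite relabelling of half-edges, with a product of $\mathcal{AS}cov$, $\Mnexm$, $\Mnquexm$ and $\mathcal{M}_{0,n}$ factors. You read off smoothness from Section~\ref{sec:rel_obj} and sum dimensions using $\#E_D-\#V_D+1=0$.

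For Step 1, the paper gets the product decomposition differently: it uses the proof of Marcus--Wise's Lemma~4.3.3.3, which only needs constant base log structure, plus the remark that there are no gluing moduli at nodes. It does not use a deformation computation. If you go via tangent spaces, an isomorphism on tangent spaces alone does not give smoothness. You also need $\dim_x U_0\ge N-3-r$. This follows from Theorem~\ref{thm:equ-dim} and Krull's theorem, since $U_0$ is cut out of the special fibre by the $r$ chart coordinates.

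Your edge-by-edge bookkeeping is tidier than the paper's manipulation with $\lceil \ell/p\rceil+\lfloor \ell/(p(p-1))\rfloor=\ell/(p-1)$, because the second form in Theorem~\ref{thm: as_smooth} already contains Riemann--Hurwitz. Your target claim, that each vertical edge contributes exactly $+1$, is true.

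\textbf{The gap is in how you propose to verify that $+1$: as written, the check fails.}

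\emph{The balancing rule is wrong.} For sections of $\omega_f^{log}$, the logarithmic orders on the two branches of a node of slope $\kappa$ are $\kappa$ and $-\kappa$. When $x\sim y^2$ one has $(dx/x)^\vee\otimes dy/y=y\,(dx)^\vee\otimes dy$ up to a unit. So the ordinary orders entering $\mathbf{m}$ are $\kappa+1$ above and $1-\kappa$ below. These sum to $2p-2=2$, as they must since $\mathbf{m}$ partitions $2p-2$; they do not sum to $-2$, which is the abelian-differential rule. With your rule the lower order would be $-\kappa-3$, and every vertical edge would contribute $-1$.

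\emph{The missing input is parity, not smallness.} Slopes need not be small: a $p$-rank $0$ Artin--Schreier component of genus $h_w$ has a single edge of slope $2h_w+1$. What matters is that every vertical slope is odd. Below an Artin--Schreier component, $\kappa=(e-1)(p-1)=e-1$ is the conductor, which is prime to $2$. Below an exact component, $\kappa+1$ is the order of an exact form, so it cannot be $\equiv -1 \pmod 2$. The quasi-exact level has no outgoing edges, so these are the only cases.

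For odd $\kappa$, an edge below an Artin--Schreier component contributes $\lceil\kappa/2\rceil+\lfloor(1-\kappa)/2\rfloor=1$. An edge below an exact component contributes $\lfloor(\kappa+1)/2\rfloor+\lfloor(1-\kappa)/2\rfloor=1$. An even slope at an Artin--Schreier node would instead give $0$. Markings and horizontal nodes on inseparable components have order $1$ and contribute $\lfloor 1/2\rfloor=0$. Together these give $N-3-\#E_D^{hor}-\#V_C^{ex}$.
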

\begin{proof}
    By construction $U_0$ is a connected component of the logarithmic stratification of $U$ supported over the special fiber of $\mathbb{Z}_{(p)}$. In particular, we know that the dual graph of the source $C$ and target curve $D$ of the universal family and the slopes of the PL function are constant along $U_0$. In \citep[Lemma 4.3.3.3]{marcus2020logarithmic} the authors describe the preimage of the Abel-Jacobi section. Note that they compute the preimage for the base being an algebraically closed field, but the only assumption they use is that the logarithmic structure on the base is constant, so their argument is applicable to our case as well. 
    
    As an Artin--Schreier cover is uniquely defined by the location of the ramified points and their conductors which are determined by the PL function, there is no Moduli along the nodes. Note that the conductors of the Artin--Schreier covers and the order of zeroes and poles are constant along the log stratification of $U$. Therefore, by the proof of \citep[Lemma 4.3.3.3]{marcus2020logarithmic} $U_0$ is isomorphic to an open neighborhood of a finite group quotient of a product of Moduli spaces of Artin--Schreier covers with fixed conductors and Moduli spaces of exact and quasi-exact differential forms with fixed order of zeroes and poles, where the quotient forgets the order of markings that are glued together to edges. Now, by the findings from the last section, $U_0$ is a finite quotient of a smooth stack, so is itself is smooth.

    To compute the dimension of $U_0$, we simply have to sum up the dimensions of each component that we computed in the last section. We begin with the maximal level where the covers are given by Artin--Schreier covers.

    In Remark~\ref{rem:con_pole}, we have linked the conductors with the orders of zeroes of bivariant forms. So when we restirct $f$ to a component of maximal level where the cover is given by an Artin--Schreier cover $f_i : C_i \to D_i$ with outgoing slopes $\ell_{1}, \dots, \ell_{z(C_i)}$, the curve $C_i$ of genus $h_i$ and $D_i$ having $\#H(D_i)^{\acute{e}t}$ many preimages of nodes where the map is étale, we see that the dimension of this Moduli is given by
    \begin{align*}
        \frac{2h_i}{p-1}+\#H(D_i)^{\acute{e}t}-1-\sum_{i=1}^{z(C_i)}\left \lfloor {\frac{\ell_i}{p(p-1)}} \right \rfloor
    \end{align*}
    Also at the top level, at some components $D_i$ of the target curve $D$ the map $f_i:f^{-1}( D_i) \to D_i$ might be étale. On such components, $f^{-1}(D_i)=C_i$ is just a disjoint union of $p$ copies of $\Pj$, each mapping by the identity to $D_i=\Pj$. So the Moduli of such a component is given by
    \begin{align*}
        \#H(D_i)-3.
    \end{align*}
    By $Mod^{AS}$ we denote the dimension of Moduli that is in the top level component. Let $V^{AS}_C$ be the set of top level components where the map is given by an Artin--Schreier cover and $V^{\acute{e}t}_D$ be the set of components of the target where the map is étale. By $Out^{AS}$ we denote the set of outgoing slopes from the top level and by $E_D^{\acute{e}t}$ we denote the set of edges between components at the maximal level of the target curve of slope $0$. By summing up we can now compute
    \begin{align*}
        Mod^{AS}=\sum_{v_i \in V_C^{AS}}\frac{ 2g(v_i)}{p-1} - \#V_C^{AS}- \sum_{\ell_i \in Out^{AS}} \left \lfloor {\frac{\ell_i}{p(p-1)}} \right \rfloor +2\#E_D^{\acute{e}t}- 3\#V^{\acute{e}t}_D
    \end{align*}
where $g(v_i)$ denotes the genus of the curve corresponding to $v_i$. 
    
    Next, we will compute the dimension of Moduli $Mod^{ex}$ coming from the components with exact differential forms. Let $C_i$ be an irreducible component on an intermediate level, $H(C_i)$ be the set of half-edges, so the number of markings and preimages of nodes under the normalization, $\ell_1, \dots ,\ell_{z(C_i)}$ be the outgoing and $\kappa_1, \dots,\kappa_{p(C_i)}$ be the incoming slopes. Note that at these components $\omega^{log}_f|_{C_i} \cong \Omega_{\mathbb{P}^1}^{(1-p)}$. Now Theorem~\ref{thm:exc_smooth} tells us that this Moduli problem is smooth of dimension
    \begin{align*}
    \#H(C_i)-4+\sum_{j=1}^{z(C_i)} \left \lfloor{\frac{\ell_j+(p-1)}{p}}\right \rfloor - \sum_{j=1}^{p(C_i)}\left \lceil{\frac{\kappa_j-(p-1)}{p}}\right \rceil.
    \end{align*}
    
    We know that the $\kappa_j, \ell_j$ are not congruent to $0$ modulo $p$ as we cannot integrate a differential form with a zero of order congruent to $-1$ modulo $p$. Therefore, the expression simplifies further to
     \begin{align*}
    2 \#H(C_i)-4+\sum_{j=1}^{z(C_i)} \left \lfloor{\frac{\ell_j}{p}}\right \rfloor - \sum_{j=1}^{p(C_i)}\left \lceil{\frac{\kappa_j}{p}}\right \rceil
    \end{align*}

    We denote by $In^{ex}$ the set of slopes that go in from the maximal level, so from an Artin--Schreier curve to the intermediate level, by $Out^{ex}$ those slopes that leave an intermediate level to the minimal level, so to a curve with an quasi-exact differential form, by $E_C^{ex}$ the set of edges between curves at intermediate levels and by $V_C^{ex}$ the set of components with an exact differential form. We can sum up the computed dimensions at each component and compute the dimension of Moduli at intermediate levels to be
    \begin{align*}
        Mod^{ex}= 2(\#In^{ex}+ Out^{ex})+3\#E_C^{ex} -4\#V_C^{ex}+\sum_{\ell_i \in Out^{ex}} \left \lfloor{\frac{\ell_i}{p}}\right \rfloor - \sum_{\kappa_i \in In^{ex}} \left \lceil{\frac{\kappa_i}{p}}\right \rceil 
    \end{align*}
    because two half-edges form one edge and $\left \lceil z \right \rceil - \left \lfloor z \right \rfloor=1$ for $z \notin \Z$.

    Finally, we compute the dimension $Mod^{qu-ex}$ of Moduli at quasi-exact components. So let $C_i$ be an irreducible component at the minimal level. Note that we do not have outgoing slopes from the minimal level. Again, by $\kappa_1, \dots, \kappa_{p(C_i)}$ we denote the incoming slopes at $C_i$. Then by the same techniques as for the case of exact differential forms, we compute the dimension of Moduli of quasi-exact differential forms at $C_i$ to be
    \begin{align*}
        \#H(C_i)+p(C_i)-3- \sum_{j=1}^{p(C_i)}\left \lceil{\frac{\kappa_j}{p}}\right \rceil
    \end{align*}
    where $H(C_i)$ is the set of half-edges, so points that correspond to edges or marked points at $C_i$. We denote by $In^{qu-ex}$ the set of incoming slopes into the minimal level and by $E_C^{qu-ex}$ be the set of edges of slope $0$ that connect components in the minimal level. 
    We now can sum up over all components of minimal level and compute $Mod^{qu.-ex.}$. As each half-edge either belongs to a marking or an edge and all markings are at the minimal level, this sum is:
    \begin{align*}
        Mod^{qu-ex}=2\#In^{qu-ex}+N+2\#E_C^{qu-ex}-3\#V_C^{qu-ex}-\sum_{\kappa_i \in In^{qu-ex}}\left \lceil{\frac{\kappa_i}{p}}\right \rceil
    \end{align*}
    with $V_C^{qu-ex}$ being the set of curves with a quasi-exact differential form.
    
    We can calculate that for a slope $\ell$ going out from a top level component we have
    \begin{align*}
        \left \lceil{\frac{\ell}{p}}\right \rceil+\left \lfloor{\frac{\ell}{p(p-1)}}\right \rfloor= \left \lceil{\frac{\ell p - \ell}{p(p-1)}}\right \rceil+\left \lfloor \frac{\ell}{p(p-1)} \right \rfloor = \frac{\ell}{p-1}
    \end{align*}
    In the last step we used that $\ell$ is divisible by $p-1$. 

    Furthermore, note that by the Riemann-Hurwitz formula we have
    \begin{align*}
        \sum_{\ell_i \in Out^{AS}} \ell_i= \sum_{v_i \in V_C^{AS}} \frac{2g(v_i)}{p-1}+2\#V_C^{AS}-\#Out^{AS}
    \end{align*}
    Using these equalities, we can sum up $Mod^{AS},Mod^{ex}$ and $Mod^{qu-ex}$ to compute the  dimension of the stratum of $U_0$ to be
    \begin{align*}
        dim(U_0)=N+3\#E_D-3\#V_D-\#E_D^{hor}-\#V_C^{ex}
    \end{align*}
    where $E_D$ is the set of nodes and $V_D$ the set of irreducible components of the target curve $D$. Note that $D$ is of genus $0$, so it holds 
    \begin{align*}
        \#E_D-\#V_D+1=0
    \end{align*}
    and the equation simplifies further to
    \begin{align*}
        dim(U_0)=N-3-\#E_D^{hor}-\#V_C^{ex}
    \end{align*}
    as desired.
\end{proof}

\begin{corollary}
    For $p=2$ and $g=0$, the irreducible components of the underlying stack of the special fiber of $\LHURphgnZ$ correspond to those graphs with only two levels and no horizontal edges.
\end{corollary}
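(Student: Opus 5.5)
The plan is to derive the corollary from Theorem~\ref{thm:equ-dim} and Lemma~\ref{lem: dim_calc}, by comparing the dimension of each logarithmic stratum with the dimension $3g-3+N = N-3$ that every irreducible component of the special fiber must have when $g=0$.

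First, I would note that the special fiber of $\LHURphgnZ$ is a finite union of locally closed logarithmic strata. Each stratum is indexed by its combinatorial type: the enhanced level graph of $C$, the dual graph of $D$, and the slopes. Every irreducible component of the special fiber is therefore the closure of an irreducible component of a single stratum, namely the stratum containing its generic point.

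Second, by Lemma~\ref{lem: dim_calc}, the stratum through a point $x$ of the special fiber (the fiber $U_0$ over the origin of the chart) is smooth of dimension $N-3-(\#V_C^{ex}+\#E_D^{hor})$. Since the special fiber is equidimensional of dimension $N-3$ by Theorem~\ref{thm:equ-dim}, a stratum can be dense in a component only if $\#V_C^{ex}=0$ and $\#E_D^{hor}=0$. Strata with $\#V_C^{ex}+\#E_D^{hor}>0$ have strictly smaller dimension, so they lie in the closure of other strata and cannot contain a generic point of a component.

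Third, I would translate these conditions into the shape of the graph. The condition $\#V_C^{ex}=0$ means there are no components carrying exact forms, so no intermediate levels. The remaining levels are then the top level, where the cover is separable and given by Artin--Schreier or étale data, and the minimal level $\log(p)$, which is always present in mixed characteristic and carries the quasi-exact forms. This gives exactly two levels. The condition $\#E_D^{hor}=0$ means there are no edges of slope $0$, i.e.\ no horizontal edges.

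For the converse, I would argue that every stratum of this type of dimension $N-3$ that is nonempty really gives an irreducible component. By the proof of Lemma~\ref{lem: dim_calc}, such a stratum is smooth of dimension $N-3$. Its closure is therefore a closed irreducible subset of full dimension in an equidimensional space of dimension $N-3$, so it is an irreducible component.

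I expect the main obstacle to be the first step: showing that the generic point of a component lies in a stratum of maximal dimension, so that the dimension count really excludes the smaller strata. I would handle this by using that the strata are finitely many and locally closed, and that the special fiber is equidimensional of dimension $N-3$ by Theorem~\ref{thm:equ-dim}.
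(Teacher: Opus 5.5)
Your proposal is correct and is essentially the paper's argument: the paper also combines the equidimensionality of the special fiber from Theorem~\ref{thm:equ-dim} with the stratum dimension $N-3-(\#V_C^{ex}+\#E_D^{hor})$ from Lemma~\ref{lem: dim_calc}, so only strata with $\#V_C^{ex}=\#E_D^{hor}=0$ (two levels, no horizontal edges) can be dense in a component. Your write-up only makes explicit two steps the paper leaves implicit: the generic-point argument over the finite locally closed stratification, and the converse that a full-dimensional stratum's closure is a component.
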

\begin{proof}
    From Theorem~\ref{thm:equ-dim} we know that each irreducible component is of dimension $N-3$. According to the calculation we have just seen, this is the dimension exactly of those strata that are given by graphs with only two levels and no horizontal edges. 
\end{proof}
Recall that in order to prove Theorem~\ref{thm:log_hur_log_smooth}, we are left to show that

  \begin{align*}
    \gamma: U \to \Z_{(p)}[\N^{\#V_C^{ex}+\#E_D^{hor}}]
\end{align*}

is smooth.  
Note that is in fact enough to show that $\gamma$ is smooth at $x$ as we can shrink $U$ even further. We will use the Jacobi criteria to prove this. To simplify notation, set 
\begin{align*}
    r=\#V_C^{ex}+\#E_D^{hor}.
\end{align*}
After potentially shrinking $U$ further, $\gamma$ comes from a morphism
\begin{align*}
    \Z_{(p)}[t_1, \dots t_r] \to \Z_{(p)}[t_1, \dots,t_r, s_1, \dots,s_m]/(g_1, \dots g_v).
\end{align*}
Note that a priori it is not clear that $U$ is a local complete intersection, so we cannot assume that $v=r+m-N+3$.

We now consider the Jacobian
\begin{align*}
  {J_{g_1, \dots g_v} (x)} = (\frac{\partial g_i}{\partial s_j}(x)) _{i,j}
\end{align*}
We know that $x$ is a smooth point of the special fiber, so after reducing modulo $p$ the morphism is smooth at $x$ of relative dimension $N-3-r$. So the reduction of the Jacobian has a $(r+m-N+3) \times (r+m-N+3)$ minor $M$, such that the reduction modulo $p$ of the determinant of $M$ is non-zero. But then also the determinant of $M$ is non-zero and as the dimension of $U$ is $N-2$ we conclude that $\gamma$ is smooth at $x$ by the Jacobi criteria. This concludes the proof of Theorem~\ref{thm:log_hur_log_smooth} in the mixed characteristic case. 

We can show log smoothness of the equicharacteristic case by exactly the same technique. Still in the case of $p=2$ and $g=0$, the generic stratum of $\mathcal{LH}_{A,\Xi}^{\mathbb{F}_{2}}$ is $\mathcal{AS}cov^{\vec{e}, N_0}_h$ for $e_i=\xi_i+1$ and $N_0$ the cardinality of the image of marked unramified points. So the dimension of the generic stratum is 
\begin{align*}
    dim(\mathcal{LH}_{A,\Xi}^{\mathbb{F}_{2}})=N-3+\sum_{\xi_i \in \Xi} \frac{\xi_i +1}{2}
\end{align*}
For $x \in \mathcal{LH}_{A,\Xi}^{\mathbb{F}_{2}}$ corresponding to a cover $f: C \to D$ and a suitable étale neighbourhood $U$ of $x$, the chart at $x$ is given by
\begin{align*}
    \chi: U \to Spec(\mathbb{F}_2[\mathbb{N}^{\#V^{ex}_C+\#E^{hor}_D}]).
\end{align*}
In order to show log smoothness of $\mathcal{LH}_{A,\Xi}^{\mathbb{F}_{2}}$, one has now to show that this map is smooth and this follows from the following lemma which can be proven by a dimension counting argument as in the proof of Lemma~\ref{lem: dim_calc}:
\begin{lemma}
    The fiber $U_0$ of $\chi:U \to Spec(\mathbb{F}_2[\mathbb{N}^{\#V^{ex}_C+\#E^{hor}_D}])$ over the origin, determined by the vanishing of all coordinates is smooth of dimension
    \begin{align*}
        dim(U_0)= N-3+\sum_{\xi_i \in \Xi} \frac{\xi_i +1}{2} - (\#V^{ex}_C+\#E^{hor}_D)
    \end{align*}
\end{lemma}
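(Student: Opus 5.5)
We follow the structure of the proof of Lemma~\ref{lem: dim_calc}, specialised to $p=2$, $g=0$ and the equicharacteristic setting.

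\textbf{Step 1: description of $U_0$.} By construction, $U_0$ is a connected log stratum of $U$. Along it, the following are constant: the dual graphs of $C$ and $D$, the slopes of $\ell_S$, and the level structure. Since $\mathbb{F}_2$ carries the trivial log structure, there is no level $\log(p)$, and by the remark after the definition of $\LHURphgnF$ no quasi-exact components occur. So the negative levels carry only exact forms.

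We apply the argument of \citep[Lemma 4.3.3.3]{marcus2020logarithmic}, which needs only constancy of the log structure on the base. It identifies $U_0$, up to an open immersion and a finite group quotient forgetting the ordering of glued half-edges, with a product of two kinds of factors:
\begin{itemize}
\item spaces $\mathcal{AS}cov_{h_i}^{\vec{e}_i,n_i}$ for top-level Artin--Schreier components, and spaces $\mathcal{M}_{0,\#H(D_i)}$ for top-level étale components;
\item spaces $\mathcal{M}_{0,\#H(C_i)}^{ex,\mathbf{m}_i}$ for the exact components at negative levels.
\end{itemize}
Artin--Schreier covers carry no moduli along the nodes, since they are determined by their branch points and conductors. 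Smoothness of $U_0$ then follows from Theorem~\ref{thm: as_smooth} and Theorem~\ref{thm:exc_smooth}, because a finite quotient of a smooth stack is smooth.

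\textbf{Step 2: the conductors.} The orders $\mathbf{m}_i$ and conductors $\vec{e}_i$ are read off from the slopes via Remark~\ref{rem:con_pole}. At an honest marking on a top-level component we get $e=\xi+1$. At an edge leaving the top level with slope $\ell$ we get the conductor $\ell/(p-1)+1=\ell+1$.

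\textbf{Step 3: the dimension count.} Sum the dimensions, as in Lemma~\ref{lem: dim_calc}:
\begin{itemize}
\item $Mod^{AS}$ from the top level, now including the markings with $\xi_i\neq 0$;
\item $Mod^{ex}$ from all negative levels, using the same formula as before, since the minimal level is now also exact.
\end{itemize}
To combine the terms we use three facts:
\begin{itemize}
\item the identity $\lceil \ell/p\rceil+\lfloor \ell/(p(p-1))\rfloor=\ell/(p-1)$ for outgoing top-level slopes;
\item the Riemann--Hurwitz formula $2h-2=p(2g-2)+\sum(\lambda_i+\xi_i-1)$, applied componentwise;
\item the genus-$0$ relation $\#E_D-\#V_D+1=0$.
\end{itemize}
For $p=2$, Theorem~\ref{thm: as_smooth} gives a contribution $e_i-1-\lfloor (e_i-1)/2\rfloor=(\xi_i+1)/2$ for each marking with $\xi_i\neq0$. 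Note that $\xi_i$ is odd, since $e_i\not\equiv 1 \bmod 2$. Each such marking also contributes $1$ to the marking count $n+m$. These two contributions together produce the extra term $\sum_{\xi_i\in\Xi}(\xi_i+1)/2$ on top of the bound $N-3-r$.

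Each component of the minimal level now has an exact form rather than a quasi-exact one. Comparing Theorem~\ref{thm:exc_smooth} with the quasi-exact theorem, this costs one dimension per component. That lost dimension is exactly the one recorded by the extra generator of $V_C^{ex}$ in Lemma~\ref{cor:min_log_stru_cov}, which also explains the absence of the $+1$ there. Hence the total is $N-3+\sum(\xi_i+1)/2-(\#V^{ex}_C+\#E^{hor}_D)$.

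\textbf{Main obstacle.} The hard part is the bookkeeping at the top level. Markings with $\xi_i\ne0$ sit on Artin--Schreier components, and they must be separated cleanly from the outgoing edges so that the floor terms telescope correctly. I would first check the count on the open stratum, where the formula must reduce to $\dim \mathcal{AS}cov^{\vec e,N_0}_h$. I would then verify that contracting one edge, or merging two levels, changes both sides by exactly $1$.
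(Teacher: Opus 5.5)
Your strategy is the one the paper intends: it only says the lemma follows by the dimension count of Lemma~\ref{lem: dim_calc}, and your Step~1 reproduces that argument. The gap is in Step~3.

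\textbf{The false assumption.} You assume that every marking with $\xi_i\neq0$ lies on a top-level Artin--Schreier component, and you reuse the $Mod^{ex}$ formula from Lemma~\ref{lem: dim_calc}. That formula was derived for exact components carrying no markings, which is where the simplification to $2\#H(C_i)-4+\dots$ comes from. In $\LHURphgnF$ the assumption fails. Take $h=1$, $N=2$, $\Xi=(1,1)$ and the family $y^2+y=a_1/x+a_2/(x-t)$ over $\bar{\mathbb{F}}_2((t))$, marked at $x=0,t$. Its $j$-invariant equals $t^4/(a_1a_2)^2\to0$. The limit is the supersingular curve $y^2+y=x^3$ at the top level, joined by an edge of slope $3$ to a Frobenius component that carries both markings. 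So ramified markings can sit on exact components at negative levels, and your reused formula has no term for them.

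\textbf{The closing comparison.} Your last paragraph, comparing exact with quasi-exact components at the minimal level, is a heuristic rather than a count. Equicharacteristic strata do not arise from mixed ones by swapping a quasi-exact form for an exact one. In the mixed case the minimal-level markings have $\xi_i=0$; here they have $\xi_i$ odd.

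\textbf{The repair.} The fix is short and needs neither Riemann--Hurwitz nor the comparison.
\begin{enumerate}
\item For $p=2$ all slopes at Frobenius components are odd. So the non-logarithmic orders there are all even: $1-\kappa$ at incoming edges, $1+\ell$ at outgoing edges, $1+\xi$ at markings. They sum to $2$, hence $\sum_j\lfloor m_j/2\rfloor=1$.
\item By Theorem~\ref{thm:exc_smooth}, every exact component therefore contributes $\#H(D_v)-3$, with or without markings.
\item By Theorem~\ref{thm: as_smooth}, an Artin--Schreier component contributes $\#H(D_v)-3+\sum(d_j+1)/2$, the sum running over its ramified points. An \'etale pair contributes $\#H(D_v)-3$.
\item Summing and using $\#V_D=\#E_D+1$ gives $N-3-\#E_D$ plus the Artin--Schreier terms.
\item Summing the relation $\sum_j m_j/2=1$ over all Frobenius components shows the following. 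The terms $(\kappa+1)/2$ from edges leaving Artin--Schreier components add up to $(\#E_D-\#E_D^{hor})-\#V_C^{ex}$ plus the $(\xi+1)/2$ of the markings lying on exact components.
\end{enumerate}
This yields exactly $N-3+\sum_{\xi_i\neq0}(\xi_i+1)/2-(\#V_C^{ex}+\#E_D^{hor})$, with markings on either kind of component handled uniformly.
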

\begin{corollary}
    For $p=2$ and $g=0$, the stack $\LHURphgnZ$ is the stack theoretic closure of the admissible cover stack in $\PLHURphgnZ$ and $\LHURphgnF$ is the stack theoretic closure of the locus of covers of smooth curves in $\PLHURphgnF$.
\end{corollary}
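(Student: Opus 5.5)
We will show, for $p=2$ and $g=0$, that $\LHURphgnZ$ coincides with the stack-theoretic closure $\overline{\mathcal{LA}dm_A}$ of the admissible cover stack inside $\PLHURphgnZ$, and likewise in equicharacteristic. The argument uses that a log smooth stack over $\Z_{(2)}$ (respectively $\mathbb{F}_2$) is reduced and flat, and that its generic stratum is dense.

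One inclusion is already available. The theorem of Section~\ref{sec:mod_space} shows that the closure of the locus of covers between smooth curves lies in $\LHURphgnZ$ (respectively $\LHURphgnF$). The same theorem, via the lifting result of Brezner--Temkin, shows that every point of $\LHURphgnZ$ lies in that closure. So the two stacks have the same underlying topological space. What remains is to compare the stack structures, that is, to show $\LHURphgnZ$ is reduced and carries no embedded components supported on the special fiber.

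For this we invoke Theorem~\ref{thm:log_hur_log_smooth}. The base $\Z_{(2)}$ has log structure $\N\to\Z_{(2)}$, $1\mapsto 2$, and $\LHURphgnZ$ is log smooth over it. In Kato's chart criterion we found étale charts $U\to \operatorname{Spec}\Z_{(2)}[\N^{r}]$ that are smooth in the ordinary sense. Hence $U$ is smooth over a regular scheme, so $U$ is regular, in particular reduced and normal.

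Next we show $U$ is flat over $\Z_{(2)}$. The structure map factors through $\Z[\N\log\rho\oplus\N^r]$ with $2\mapsto\rho$. Since $\rho$ is a nonzerodivisor in a regular (hence domain-locally) ring, $2$ is a nonzerodivisor on $U$. Torsion-freeness then gives flatness, which is the content of the flatness lemma over a DVR recalled before Theorem~\ref{thm:equ-dim}.

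Flatness and reducedness together imply that the generic fiber is schematically dense. Since the generic fiber is exactly $\mathcal{LA}dm_A$, we get $\LHURphgnZ=\overline{\mathcal{LA}dm_A}$ as stacks, using that the closure is by definition the smallest closed substack containing the generic fiber.

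In the equicharacteristic case, the same chart is smooth over $\mathbb{F}_2[\N^{r}]$, so $U$ is regular. The locus where all the chart coordinates are invertible is the locus of covers of smooth curves: there are no negative levels and no nodes there. This locus is dense and open in each local irreducible component, because each coordinate function is a nonzerodivisor on a regular ring. A reduced stack in which this open locus is dense equals its closure.

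The main obstacle is the precise identification of the trivial-log-structure locus of the chart with the smooth-cover locus, and the verification that the chart's monomials are nonzerodivisors locally on the stack rather than merely on a scheme. We will address this using the explicit description of minimal monoids in Lemma~\ref{cor:min_log_stru_cov} and the étale-local nature of all the properties involved.
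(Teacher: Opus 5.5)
Your argument is correct, but it takes a different route from the paper. The paper starts from the topological identification: the reduced stack underlying $\LHURphgnZ$ is the closure, by the proof of Theorem~\ref{thm:equ-dim} and the lifting theorem. It then rules out embedded components directly. Such a component would have to meet a log stratum; since the strata have constant tangent dimension and are smooth by Lemma~\ref{lem: dim_calc}, it would have to contain a whole smooth, hence reduced, stratum, which is impossible.

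You instead use the full conclusion of Theorem~\ref{thm:log_hur_log_smooth}. The \'etale charts are smooth over $\operatorname{Spec}\Z_{(2)}[\N^r]$ (resp.\ $\operatorname{Spec}\mathbb{F}_2[\N^r]$), hence regular and flat. So the generic fiber (resp.\ the trivial-log locus, i.e.\ covers of smooth curves) is schematically dense.

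Your version makes the corollary a formal consequence of log smoothness and yields more: regularity (hence normality) and flatness of the whole stack. It also avoids the delicate step of inferring the absence of embedded components from smoothness of individual strata. The paper's version needs only the stratum-level input of Lemma~\ref{lem: dim_calc}, not the Jacobian step in the proof of the theorem.

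Three small tightenings. First, flatness over $\Z_{(2)}$ is immediate: a smooth morphism is flat and $\Z_{(2)}[\N^r]$ is flat over $\Z_{(2)}$. Your nonzerodivisor argument as written still needs $\rho\neq 0$ in every local ring of $U$, which this flatness, or your topological identification, supplies. Second, flatness alone already makes the generic fiber schematically dense, so reducedness is a bonus rather than a needed input. Third, in the equicharacteristic case the chart coordinates are nonzerodivisors for the same reason, namely flatness of $\chi$. The stack-versus-scheme worry you raise is harmless, since all properties involved are \'etale local.
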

\begin{proof}
    In the proof of Theorem~\ref{thm:equ-dim} we saw that the underlying reduced stack of $\LHURphgnZ$ is the stack theoretic closure of the admissible cover stack in $\PLHURphgnZ$. So we are left to show that $\PLHURphgnZ$ has no associated components. 
    
    An associated component would intersect a stratum of the logarithmic stratification of $\LHURphgnZ$ but as the tangent space of the stratum is of the same dimension everywhere, the whole stratum would be contained in this associated component. But this is not possible as the stratum is smooth, so in particular reduced.

    The result of the equicharacteristic case follows by the same argument.
\end{proof}
\section{Example: Degree Two Covers of \texorpdfstring{$\Pj$}{P1} Ramified over Four Points} \label{sec:example}

In this section, we want to revisit the Moduli space of liftable Hurwitz covers of the projective line by elliptic curves that are ramified over four points over $\mathbb{Z}_{(2)}$ as it was originally described in \citep[Section 4]{abramovich1998stable}. We will see how the covers in the special fiber of the Moduli space that Abramovich and Oort explicitly constructed by computing stable models of Hurwitz covers in characteristic $0$ over $\Z_2$, can also directly be obtained as points of the special fiber of our logarithmic Hurwitz stack $\mathcal{LH}^{\Z_{(2)}}_{1,0,4,(2,2,2,2)}$. 

At first, Abramovich and Oort considered the elliptic cover of the projective line
\begin{align*}
    y^2=x(x-1)(x-\lambda)
\end{align*}
where modulo $2$ the marking $\lambda$ does not specialize to $0,1$ or $\infty$ and resolved singularities by a sequence of blowups in the special fiber. They ended up in a configuration where in the special fiber the cover is given by a supersingular elliptic curve mapping to $\Pjk$, but there is also a rational component of the source that maps inseparably to a rational component $\Pjk$ in the target. The four markings specialize to this inseparable component, and one sees that the point where this inseparable component is attached to the ramified point of the elliptic cover is given by $\sqrt{\lambda}$. Therefore, these covers give a onedimensional family in the special fiber of the Moduli space that is parametrized by $\lambda$. 

Already in this relatively simple case, one sees that various phenomena that only appear in positive characteristic do occur. With our tools, we can reveal more structure in this configuration, which can be seen in Figure~\ref{fig:irr_comp_lambda}.
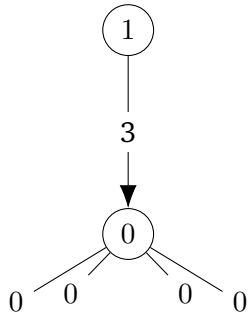
\begin{figure}[htb] 
   
\centering

 \begin{tikzpicture}
\node[circle,draw] (v0) at (0,0) {$1$};
\node[circle,draw, below = 2cm of v0] (v1) {$0$};

\node[below left = 0.4 cm and 1.0cm of v1] (m1) {$0$};
\node[below left = 0.4 cm of v1] (m2) {$0$};
\node[below right = 0.4 cm of v1] (m3) {$0$};
\node[below right = 0.4 cm and 1.0cm of v1] (m4) {$0$};

\draw[-{Latex[length=3mm]}] (v0) edge node[midway, fill=white]{3} (v1);

\draw [-] (v1) -- (m1);
\draw [-] (v1) -- (m2);
\draw [-] (v1) -- (m3);
\draw [-] (v1) -- (m4);

\end{tikzpicture}

\caption{The generic dual level graph $\Delta_{1;0}$ of the source curve of one irreducible component of the special fiber of our Moduli space. Note that when the four markings are labeled $0,1,\lambda$ and $\infty$, the elliptic curve is glued to the point $\sqrt{\lambda}$.}\label{fig:irr_comp_lambda}
\end{figure}
Covers in this stratum consist of an elliptic curve that maps to the projective line separably and such that the cover is ramified only over one point, so the curve is supersingular and given by the equation
\begin{align*}
    y^2+y=x^3.
\end{align*}
One can check that the bivariant differential form at this component, which is the trace form, has a zero of order $4$, which is compatible with the graph as we count the order of zeroes and poles logarithmically.

On the component where the map is inseparable, the condition is that the bivariant form with simple zeroes  at $0,1,\lambda, \infty$ and a pole of order $2$ at $\mu$ that is the attaching point, is quasi-exact. To check this, we compute
\begin{align*}
    \mathfrak{tc} \left( \frac{y(y-1)(y-\lambda)}{(y-\mu)^2}  \frac{dy}{dx}\right) = \frac{1}{(y-\mu)} \mathfrak{tc}\left((y^3-(1+\lambda)y^2 - \lambda y) \frac{dy}{dx} \right) = \frac{y-\sqrt\lambda}{y-\mu}
\end{align*}
This is in $k^\times$, so the form is quasi-exact, iff $\mu = \sqrt{\lambda}$ as expected.

Here we also see the difference between $\PLHURphgnZ$ and $\LHURphgnZ$: Without the exactness and quasi-exactness condition, one would not impose any relation between $\mu$ and $\lambda$. Compared to $\mathcal{LH}^{\Z_{(2)}}_{1,0,4,(2,2,2,2)}$ the Moduli space $\mathcal{PLH}^{\Z_{(2)}}_{1,0,4,(2,2,2,2)}$ has an additional two-dimensional irreducible component that is generically supported in characteristic $2$ and correspond to the configuration of $\Delta_{1;0}$ without any requirements on the configuration of markings and nodes. This irreducible component intersects $\mathcal{LH}^{\Z_{(2)}}_{1,0,4,(2,2,2,2)}$ in those points in the strata of $\Delta_{1;0}$ where the relation $\sqrt{\lambda}=\mu$ is satisfied. From this, one sees in particular that in contrast to $\LHURphgnZ$, the stack $\PLHURphgnZ$ is not log smooth.

The next case Abramovich and Oort consider is the case where $\lambda$ reduces modulo $2$ to either $0,1$ or $\infty$, but the $j$-invariant does not reduce to $0$. In this case, one obtains the configuration of Figure~\ref{fig:irr_comp_j}.
\begin{figure}[htb] 
   
\centering

 \begin{tikzpicture}
\node[circle,draw] (v0) at (0,0) {$1$};
\node[circle,draw, below left= 2cm and 1cm of v0] (v1) {$0$};
\node[circle,draw, below right= 2cm and 1cmof v0] (v2) {$0$};

\node[below left = 0.4 cm of v1] (m1) {$0$};
\node[below right = 0.4 cm of v1] (m2) {$0$};
\node[below left = 0.4 cm of v2] (m3) {$0$};
\node[below right = 0.4 cm of v2] (m4) {$0$};

\draw[-{Latex[length=3mm]}] (v0) edge node[midway, fill=white]{1} (v1);
\draw[-{Latex[length=3mm]}] (v0) edge node[midway, fill=white]{1} (v2);

\draw [-] (v1) -- (m1);
\draw [-] (v1) -- (m2);
\draw [-] (v2) -- (m3);
\draw [-] (v2) -- (m4);

\end{tikzpicture}

\caption{The dual level graph $\Delta_{1;0,0}$ of the target curve of three strata of the special fiber of our Moduli space.}\label{fig:irr_comp_j}
\end{figure}
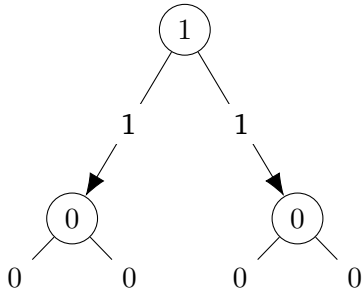
These component are parametrized by the $j$-invariant of the curve at the separable level, where we have seen that the cover is given by the Artin--Schreier equation
\begin{align*}
    y^2-y= \frac{1}{x} +\frac{1}{x-a}
\end{align*}
This gives us a onedimensional family. Furthermore, it is easy to check that the bivariant form $y(y-1)\frac{dy}{dx}$ is quasi-exact, and as there are only three special points at components of the minimal level, there is no Moduli coming from these components. This configuration gives us three strata in the special fiber of $\mathcal{LH}^{\Z_{(2)}}_{1,0,4,(2,2,2,2)}$ that correspond to different order of the markings.

Note that the configurations that we have considered so far, had two levels and gave us smooth, one-dimensional strata. This fits in what we have seen in the last section, as also the admissible cover stack parametrizing double covers of the projective line by an elliptic curve ramified over four points in characteristic $0$ is onedimensional. 

Coming from the case of the strata described by $\Delta_{1;0,0}$, Abramovich and Oort describe two additional configurations that can be seen in Figure~\ref{fig:boundary}.
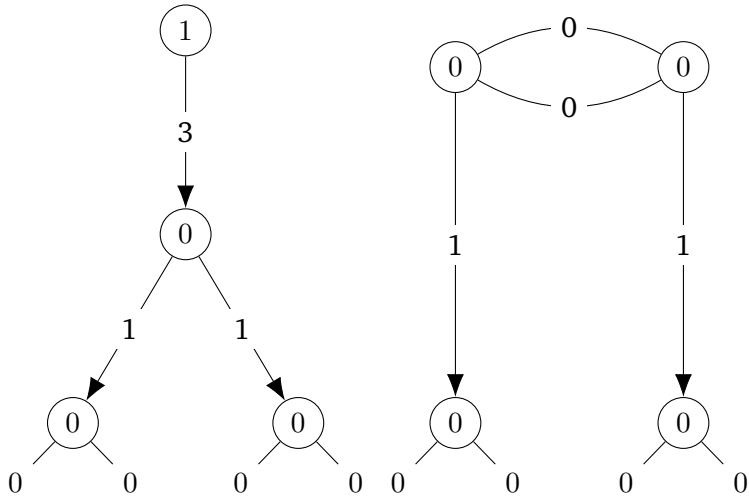
\begin{figure}[htb] 
\centering
   \begin{tabular}{ c c }

\begin{tikzpicture}
\node[circle,draw] (v0) at (0,0) {$1$};
\node[circle,draw, below = 2cm of v0] (v1)  {$0$};
\node[circle,draw, below left= 2cm and 1cm of v1]  (v2)  {$0$};
\node[circle,draw, below right =2cm and 1cm of v1] (v3)  {$0$};

\node[below left = 0.4 cm of v2] (m1) {$0$};
\node[below right = 0.4 cm of v2] (m2) {$0$};
\node[below left = 0.4 cm of v3] (m3) {$0$};
\node[below right = 0.4 cm of v3] (m4) {$0$};

\draw[-{Latex[length=3mm]}] (v0) edge node[midway, fill=white]{3} (v1);
\draw[-{Latex[length=3mm]}] (v1) edge node[midway, fill=white]{1} (v2);
\draw[-{Latex[length=3mm]}] (v1) edge node[midway, fill=white]{1} (v3);

\draw [-] (v2) -- (m1);
\draw [-] (v2) -- (m2);
\draw [-] (v3) -- (m3);
\draw [-] (v3) -- (m4);
\end{tikzpicture}

 \begin{tikzpicture}
\node[circle,draw] (v0) at (0,0) {$0$};
\node[circle,draw] (v1) at (3,0) {$0$};
\node[circle,draw, below =4cm of v0] (v2)  {$0$};
\node[circle,draw, below =4cm of v1] (v3) {$0$};

\node[below left = 0.4 cm of v2] (m1) {$0$};
\node[below right = 0.4 cm of v2] (m2) {$0$};
\node[below left = 0.4 cm of v3] (m3) {$0$};
\node[below right = 0.4 cm of v3] (m4) {$0$};

\draw[-] (v0) edge[bend right] node[midway, fill=white]{0} (v1);
\draw[-] (v0) edge[bend left] node[midway, fill=white]{0} (v1);
\draw[-{Latex[length=3mm]}] (v0) edge node[midway, fill=white]{1} (v2);
\draw[-{Latex[length=3mm]}] (v1) edge node[midway, fill=white]{1} (v3);

\draw [-] (v2) -- (m1);
\draw [-] (v2) -- (m2);
\draw [-] (v3) -- (m3);
\draw [-] (v3) -- (m4);

\end{tikzpicture} 

\end{tabular}

\caption{The two graphs $\Delta_{1;0;0,0}$ and $\Delta_{0,0;0;0}$ corresponding to strata in the boundary of the strata described by $\Delta_{1;0,0}$.}\label{fig:boundary}
\end{figure}
When in the last case $j$ specializes to $0$ one obtains the configuration of graph $\Delta_{1;0;0,0}$ and when $j$ specializes to $\infty$, then one obtains $\Delta_{1;0,0}$ from Figure~\ref{fig:boundary}. Note that depending on the order of the markings, there are three strata for both of the given dual level graphs.

If a cover is in the stratum of $\Delta_{1;0;0,0}$, as in the stratum $\Delta_{1;0,0}$, at the maximum level the cover is given by a supersingular elliptic curve at the maximal level and at the intermediate level one verifies that the relative differential form $\frac{y^2(y-1)^2}{(y-\infty)^2} \frac{dy}{dx}$ is exact. As on each rational component there are just three special points, the strata is just a point. The logarithmic structure at such a point is a free monoid of rank $2$, generated by the minimal level $log(p)$ and a parameter for the intermediate level where the rational component with an exact differential form is located.

In the strata of $\Delta_{0,0;0,0}$, note that both bent edges correspond to nodes that map to the same node of the target curve, so the map is étale there. At both components of the maximal level the cover is given by
\begin{align*}
    y^2+y=x
\end{align*}
where the curves are glued together at two points, that map to the same point in the target. This stratum also consists just of three points corresponding to the order of the markings. The log structure here is freely generated by the minimal level $log(p)$ and one smoothing parameter for the node of the target curve that is the image of the horizontal edges. Note that the bent edges of the source curve do not get independent smoothing parameters, as they map to the same node in the target.

The strata of $\Delta_{1;0;0,0}$ are also in the boundary of the respective component of $\Delta_{1;0}$ and is obtained when two zeroes of the quasi-exact bivariant form collide. 

To summarize, the special fiber in characteristic $2$ of $\mathcal{LH}^{\Z_{(2)}}_{1,0,4,(2,2,2,2)}$ is a log curve that can be constructed by gluing three copies of $\overline{\mathcal{M}}_{1,1}$ with coordinate being the $j$-invariant to a projective line with coordinate $\lambda$. Each such copy of $\overline{\mathcal{M}}_{1,1}$ is glued to the projective line by identifying the point $j=0$ with $\lambda=0,1$ or $\infty$ respectively. Each copy of $\overline{\mathcal{M}}_{1,1}$ has a marking at $j= \infty$ as the minimal monoids of the strata of $\Delta_{0,0:0,0}$ have an additional smoothing parameter there.
\bibliographystyle{alpha}
\bibliography{bib}

\end{document}